\newcommand{\hookdownarrow}{\mathrel{\rotatebox[origin=c]{-90}{$\hookrightarrow$}}}
\newtheorem{theorem}{Theorem}
\newtheorem{corollary}[theorem]{Corollary}
\newtheorem{example}[theorem]{Example}
\newtheorem{examples}[theorem]{Examples}
\newtheorem{lemma}[theorem]{Lemma}
\newtheorem{proposition}[theorem]{Proposition}
\newtheorem{remark}[theorem]{Remark}
\newtheorem{fact}[theorem]{Fact}
\newenvironment{proof}[1][Proof]{\noindent\textbf{#1.} }{\ \rule{0.5em}{0.5em}}
\begin{document}
\title{Classification of Leavitt Path Algebras with Gelfand-Kirillov Dimension $< $ 4 up to Morita Equivalence}
\author{Ayten Ko\c{c} $^{*}\> $, $\>$  Murad  \"{O}zayd\i n $^{**}$\\ \\}

 
\usetikzlibrary{shapes,positioning}

 \maketitle
 
\medskip

\begin{abstract}
  Leavitt path algebras are associated to di(rected )graphs and there is a combinatorial procedure (the reduction algorithm) making the digraph smaller while preserving the Morita type. We can recover the vertices and most of the arrows of the completely reduced digraph from the module category of a Leavitt path algebra of polynomial growth.  We give an explicit classification of all irreducible representations of when the coefficients are a commutative ring with 1. We define  a Morita invariant filtration of the module category by Serre subcategories and as a consequence we obtain a Morita invariant (the weighted Hasse diagram of the digraph) which captures the poset of the sinks and the cycles of $\Gamma$, the Gelfand-Kirillov dimension and more.  When the Gelfand-Kirillov dimension of the Leavitt path algebra is less than 4, the weighted Hasse diagram (equivalently, the complete reduction of the digraph) is a complete Morita invariant. \\
\end{abstract}

{\bf Keywords:} Leavitt path algebra, quiver representations, Morita equivalence, the reduction algorithm, Gelfand-Kirillov dimension, Serre subcategory, the weighted Hasse diagram \\
\medskip

\medskip
\section{Introduction}
\medskip

Leavitt Path algebras (LPAs) have been a very active area of research recently, however their module theory is still in its infancy  \cite[Introduction]{r20}. This paper  is a contribution to this area from a categorical perspective.\\

The Leavitt path algebra $L(\Gamma)$ of a di(rected )graph $\Gamma$ was defined (many decades after Leavitt's seminal work  \cite{lea62}, via a detour through functional analysis) by Abrams, Aranda Pino \cite{aa05} and by Ara, Moreno, Pardo \cite{amp07} (independently and essentially simultaneously) as an algebraic analog of a graph $C^*$-algebra. In addition to the algebras $L(1,n)$ of Leavitt \cite{lea62}  these include (sums of) matrix algebras (over fields or Laurent polynomial algebras), algebraic quantum discs and spheres, and many others. The important subclass of Leavitt path algebras of polynomial growth were identified as coming from finite digraphs whose cycles are pairwise disjoint and then studied by Alahmedi, Alsulami, Jain, Zelmanov \cite{aajz12}, \cite{aajz13}.  \\

In \cite{ko1} we showed that the category of (unital) $L_{\mathbb{F}}(\Gamma)$-modules  is equivalent to a full subcategory of quiver representations satisfying a natural isomorphism condition when $\Gamma$ is a  row-finite digraph.  Here we remark that the same result is valid when the coefficients are a commutative ring ${\bf k}$ with 1. (We always denote a field by $\mathbb{F}$ and a commutative ring with 1 by ${\bf k}$.) Similarly, the explicit Morita equivalence given by an effective combinatorial (reduction) algorithm on a  digraph  $\Gamma$ originally given in \cite{ko2} to classify all finite dimensional modules of the Leavitt path algebra $L(\Gamma)$  also generalizes  when the coefficient field $\mathbb{F}$ is replaced with ${\bf k}$. In addition to these two results we give the relevant definitions and some fundamental facts in the next section on the preliminaries, in the generality of the coefficients being a commutative ring ${\bf k}$.\\

In Section 3 we focus on a finite digraph $\Gamma$ whose cycles are pairwise disjoint, a necessary and sufficient condition for $L_{\mathbb{F}}(\Gamma)$ to have polynomial growth. First we define a convenient ${\bf k}$-basis for $L_{\bf k}(\Gamma)$ in Theorem \ref{basis} which is useful for giving a combinatorial formula of the Gelfand-Kirillov dimension (Theorem \ref{height}, Theorem \ref{k-height}) as well as characterizing some important Serre subcategories of the module category $Mod_{L_{\bf k}(\Gamma)} $ .\\

We  give a fairly explicit classification of simple $L_{\bf k}(\Gamma)$-modules in Theorem \ref{Classification 1} of Section 4. When the coefficients are a field,  simple modules were identified as Chen modules and their generalizations by Ara and Rangaswamy \cite{ar14}. Our explicit form of the simple modules is utilized  to compute their extensions, which is needed in the next section.\\

 In Section 5 we work towards classifying $L_{\mathbb{F}}(\Gamma)$ when $\Gamma$ is a finite digraph whose cycles are pairwise disjoint, up to Morita equivalence. To this end we define filtrations of $\Gamma$ by subgraphs, $L_{\mathbb{F}}(\Gamma)$ by invariant graded ideals and $Mod_{L_{\mathbb{F}}(\Gamma)}$ by Serre subcategories. We obtain a Morita invariant polynomial (Theorem \ref{polinom}) whose coefficients are defined combinatorially and whose degree is the Gelfand Kirillov dimension of  $L_{\mathbb{F}}(\Gamma)$. We show that the poset of the sinks and the cycles of $\Gamma$ is also a Morita invariant. We define the \textit{weighted Hasse diagram} of $\Gamma$, which is a Morita invariant containing all the Morita invariants 
 just mentioned, and more. When the Gelfand-Kirillov dimension of $L_{\mathbb{F}}(\Gamma)$ is less then 4, the weighted Hasse diagram (equivalently, the complete reduction of $\Gamma$) turns out to be a complete Morita invariant (Theorem \ref{<4}).   \\

\medskip
\section{Preliminaries}
\subsection{Digraphs}
\medskip 
A {\em di}({\em rected} ){\em graph} $\Gamma$ is a four-tuple $(V,E,s,t)$ where $V$ is the set of vertices, $E$ is the set of arrows (directed edges), $s$ and $t:E \longrightarrow V$ are the source and  the target functions. The digraph $\Gamma$ is \textit{finite} if $E$ and $V$ are both finite. $\Gamma$ is \textit{row-finite} if $s^{-1}(v)$ is finite for all $v$ in $V$. If $s^{-1}(v)= \emptyset$ then  the vertex $v$ is a \textit{sink}; if $t^{-1}(v)= \emptyset$ then it is a \textit{source}. If $t(e)=s(e)$ then $e$ is  a \textit{loop}. If $W\subseteq V$ then $\Gamma_{W}$ denotes the \textit{full subgraph} on $W$, that is, $\Gamma_W=(W, s^{-1}(W) \cap t^{-1}(W), s\vert_{_W}, t\vert_{_W})$.\\

A \textit{path} $p$ of length $n>0$ is a sequence of arrows $e_{1}\ldots e_{n}$ such
that $t(e_{i})=s(e_{i+1})$ for $i=1,\ldots ,n-1$. The source of $p$ is $s(p):=s(e_{1})$  and the target of $p$ is $t(p ):=t(e_{n})$.  A path of length 0 consists of a single vertex $v$ where $s(v):=v $ and $t(v) := v$. We will denote the length of $p$ by $l(p)$. 
The set $int(p)$ of \textit{internal vertices} of a path $p=e_1e_2 \cdots e_n$ is $\{se_1,se_2,\cdots , se_n \}$ and $int(p)=\emptyset$ if $l(p)=0$, that is, $p$ is a vertex. 
An \textit{infinite path} $p$ is an infinite sequence of arrows $e_1e_2 \cdots e_k \cdots $
 with $t(e_i)=s(e_{i+1})$ for each $i$; now $s(p)=s(e_1)$ but $t(p)$ is not defined. An arrow $e$ (respectively, a vertex $v$) is said to be on $p$ if $e$ is one of the arrows in the sequence defining $p$ (respectively, if $v=s(p)$ or $v=t(e)$ for any arrow $e$ on $p$).  An \textit{exit} of a path $p$ is an arrow $e$ which is not on $p$ but $s(e)$ is on $p$.  A path $C=e_1e_2 \cdots e_n$ with $n>0$ is a \textit{cycle} if $s(C )=t(C )$ and $s(e_{i})\neq s(e_{j})$ for $i\neq j$. We consider the cycles $e_1e_2 \cdots e_n$ and $e_2e_3 \cdots e_ne_1$ equivalent.
 The digraph $\Gamma$ is \textit{acyclic} if it has no cycles. 
  $Path(\Gamma)$ denotes the set of paths in $\Gamma$, $P_w:=\{p \in Path(\Gamma) \> \vert \> tp=w \notin int(p) \}$ and $P_w^v :=\{ p\in P_w \> \vert \> sp=v \}$.\\

There is a pre-order $\leadsto$ on the set of vertices $V$ of $\Gamma$: $u \leadsto v $ if and only if   there is a path from $ u$ to   $ v$. This pre-order defines an equivalence relation on $V$: $u\sim v$ if and only if $u\leadsto v$ and $v\leadsto u$. There is an induced partial order on equivalence classes, which we will also denote by $[u] \leadsto [v]$. \\

When the cycles of $\Gamma$ are pairwise disjoint, the preorder $\leadsto$ defines a partial order on the set of sinks and cycles in $\Gamma$. For each cycle $C$ let's fix a vertex $v_C$ on $C$ and let $U:=\{ w\in V \> \vert \> w \textit{ is a sink in } \Gamma \> \} \cup \{ v_C \> \vert \> C \textit{ is a cycle in } \Gamma \}$. Clearly there is an order preserving one-to-one correspondence between $U$ and the set of sinks and cycles in $\Gamma$.\\

The \textbf{predecessors} of $w$ in $V$ is $V_{\leadsto w} := \{ v \in V \mid v \leadsto w\} $. (This set is also denoted by $M(w)$ in the literature.) If $v$ and $w$ are two vertices on a cycle $C$ then $V_{\leadsto v}=V_{\leadsto w}$,  so $V_{\leadsto C} :=V_{\leadsto v}$ is well-defined. Let $\Gamma_{\leadsto w}$ and $\Gamma_{\leadsto C}$ be the full subgraphs on $V_{\leadsto w}$ and $V_{\leadsto C}$, respectively. \\

 \medskip
\subsection{Leavitt Path Algebras}
\medskip

Given a digraph $\Gamma,$ the \textit{extended digraph} of $\Gamma$ is $\tilde{\Gamma} := (V,E \sqcup E^*, s~,t~)$ where $E^* :=\{e^*~|~ e\in E \}$,  the functions $s$ and $t$ are  extended as $s(e^{\ast}):=t(e)$ and $t(e^{\ast }):=s(e) $ for all $e \in E$. Thus the dual arrow $e^*$ has the opposite orientation of $e$. We extend $*$ to an operator defined on all paths of $\tilde{\Gamma}$: Let $v^*:=v$ for all $v$ in $V$, $(e^*)^*:=e $ for all $e$ in $E$ and $p^*:= e_n^* \ldots e_1^*$ for a path $p=e_1 \ldots e_n$ with $e_1, \ldots , e_n $ in $E \sqcup E^* $. In particular $*$ is an involution, that is, $**=id$. \\

The \textbf{ Leavitt path algebra} of a digraph $\Gamma$ with coefficients in a commutative ring ${\bf k}$ with 1, as defined in \cite{aa05}, \cite{amp07} and \cite{t11} is the ${\bf k}$-algebra $L_{\bf k}(\Gamma)$ generated by $V \sqcup E \sqcup E^*$ satisfying:\\
\indent(V)  $\quad \quad vw ~=~ \delta_{v,w}v$ for all $v, w \in V ,$ \\
\indent($E$)  $\quad \quad s ( e ) e  = e=e t( e)  $ for all $e \in E\sqcup E^*$, \\
\indent(CK1) $\quad e^*f ~ = ~ \delta_{e,f} ~t(e)$ for all $e,f\in E$, \\
\indent(CK2) $\quad v~  = ~ \sum_{s(e)=v} ee^*$  for all $v$ with $0< \vert s^{-1}(v) \vert < \infty $\\
\noindent
where $\delta_{i,j}$ is the Kronecker delta.
$L_{\bf k}(\Gamma)$ is a $*$-algebra since these relations are compatible with the involution $*$ which defines an anti-automorphism on $L_{\bf k}(\Gamma)$: for all $a$, $b$ in $L_{\bf k}(\Gamma)$ we have $(ab)^*=b^*a^*$. \\

We sometimes suppress the subscript ${\bf k}$ in the notation $L_{\bf k}(\Gamma)$. If the digraph $\Gamma$ is fixed and clear from the context we may abbreviate $L(\Gamma)$ to $L$. From now on we will omit the parentheses when the source and target functions $s$, $t$ are applied, to reduce notational clutter. \\

The relations (V) simply state that the vertices are mutually orthogonal idempotents. The relations (E) implies that $e \in seLte$ and $e^* \in teLse$ for every $e \in E$. The Leavitt path algebra $L$ as a vector space is $\bigoplus vLw$ where the sum is over all pairs $(v,w) \in V \times V$ since $V\sqcup E\sqcup E^*$ generates $L$. If we only impose the relations (V) and ($E$) then we obtain ${\bf k}\tilde{\Gamma}$, the \textbf{path }(or \textbf{quiver}) \textbf{algebra} of the extended digraph $\tilde{\Gamma}$ : The paths in $\tilde{\Gamma}$ form a basis of the free ${\bf k}$-module ${\bf k}\tilde{\Gamma}$. The multiplicative structure of ${\bf k}\tilde{\Gamma}$ is given by:  the product $pq$ of two paths $p$ and $q$ is their concatenation if $tp=sq$ and 0 otherwise, extended linearly. 
$L_{\bf k}(\Gamma)$ is a quotient of $ {\bf k}  \tilde{\Gamma}$ by the ideal generated by the Cuntz-Krieger relations (CK1), (CK2). The algebras $ {\bf k}\tilde{\Gamma}$ and $L_{\bf k}(\Gamma)$ are unital if $V$ is finite, in which case the sum of all the vertices is 1. \\

\begin{fact} \label{fact}

Let $p$ and $q$ be paths and $C$ be  a cycle with no exit in $\Gamma$.\\ 
\noindent
(i) $p^*q=0$ unless 
   $q$ is an initial segment of $p$  (i.e., $p=qr$) or $p$ is an initial segment of $q$. \\
\noindent
(ii) $L_{\bf k}(\Gamma)$ is spanned by $\left\{pq^* \vert \> p,\> q \in Path \Gamma, \> tp=tq \right\}$ as a ${\bf k}$-module.\\
\noindent
(iii) If $C$ is a cycle with no exit then $CC^*=sC=C^*C$. 
\end{fact}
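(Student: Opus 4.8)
The plan is to prove the three parts in order, each by a short induction on path length using only the defining relations, with (i) feeding into (ii) and (ii) into (iii). For (i), write $p=e_1\cdots e_m$ and $q=f_1\cdots f_n$ with all $e_i,f_j\in E$, so that $p^*q=e_m^*\cdots e_1^*f_1\cdots f_n$. The innermost factor $e_1^*f_1$ is $\delta_{e_1,f_1}\,t(e_1)$ by (CK1); if $e_1\ne f_1$ this is $0$, and otherwise the vertex $t(e_1)=t(f_1)$ is absorbed by its neighbours via (E), leaving $e_m^*\cdots e_2^*f_2\cdots f_n$. Inducting on $\min(m,n)$, the product vanishes unless $e_i=f_i$ for every $i\le\min(m,n)$, in which case it equals $(e_{n+1}\cdots e_m)^*$ when $m>n$ (so $p=qr$), equals $f_{m+1}\cdots f_n$ when $m<n$ (so $q=pr$), and equals $\delta_{p,q}\,tp$ when $m=n$.

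For (ii), recall that $V\sqcup E\sqcup E^*$ generates $L_{\bf k}(\Gamma)$, so every element is a ${\bf k}$-combination of monomials in these generators; a vertex occurring in the interior of a monomial is absorbed into an adjacent arrow by (V) and (E), so it suffices to rewrite monomials in $E\sqcup E^*$. In such a monomial, locate the leftmost occurrence of a starred arrow immediately followed by an unstarred one, i.e.\ a factor $e^*f$ with $e,f\in E$: by (CK1) this factor is either $0$ (killing the whole monomial) or it collapses to the vertex $t(e)$, which is then reabsorbed, strictly shortening the word. Iterating, each surviving monomial becomes one in which every unstarred arrow precedes every starred arrow, that is, of the form $pq^*$ with $p,q\in Path\,\Gamma$; nonvanishing in ${\bf k}\tilde\Gamma$ then forces $tp=tq$.

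For (iii), the hypothesis that $C=e_1\cdots e_n$ has no exit says precisely that at each vertex $v$ on $C$ the unique arrow with source $v$ is the $e_i$ lying on $C$, so $\vert s^{-1}(v)\vert=1$ and (CK2) gives $v=e_ie_i^*$. Hence $CC^*=e_1\cdots e_{n-1}(e_ne_n^*)e_{n-1}^*\cdots e_1^*=e_1\cdots e_{n-1}\,s(e_n)\,e_{n-1}^*\cdots e_1^*$, and peeling off one matched pair at a time (using $s(e_{i+1})=t(e_i)$ together with (E)) collapses this telescopically to $s(e_1)=sC$. Dually, $e_1^*e_1=t(e_1)$ by (CK1), and the same telescoping from the inside out gives $C^*C=t(e_n)=tC=sC$.

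There is no deep obstacle here; the two points that most need care are the termination of the rewriting procedure in (ii) --- handled by induction on the number of $E\sqcup E^*$-letters, since every step either annihilates the monomial or shortens it --- and the elementary but essential observation in (iii) that ``$C$ has no exit'' is exactly the condition that makes (CK2) applicable in the form $v=e_ie_i^*$ at every vertex of the cycle.
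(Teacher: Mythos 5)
Your proposal is correct and follows essentially the same route as the paper: (i) by collapsing $p^*q$ innermost-first via (CK1) and vertex absorption, (ii) by rewriting arbitrary monomials into the form $pq^*$ using the same reduction, and (iii) by combining (CK1) with the observation that ``no exit'' makes (CK2) read $v=e_ie_i^*$ on the cycle. The only cosmetic difference is that the paper phrases (ii) as closure of the set $\{pq^*\}$ under multiplication rather than as a direct rewriting of monomials, which amounts to the same computation.
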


\begin{proof}
(i) If $p$ is not an initial segment of $q$ and $q$ is not an initial segment of $p$ then using the relations (CK1), (E) and (V) when necessary, we can simplify $p^*q$ until we get a term $e^*f$ with $e\neq f$, which is 0 by (CK1).\\

(ii) If $p=qr$ then $p^*q$ simplifies to $r^*$, similarly if $q=pr$ then $p^*q=r$. Hence (i) implies that multiplying terms of the form $pq^*$ we get 0 or another term of this form. Also, if $tp\neq tq$ then $pq^*=0$ by (V) and (E). Thus $L_{\bf k}(\Gamma)$ is generated as a ${\bf k}$-module by $pq^*$ with $p, \> q$ in $Path\Gamma$ and $tp=tq$.\\

(iii) As in (ii), $C^*C=tC=sC$ after applying (CK1), (V) and (E) repeatedly. Since $C$ has no exit,  $se=ee^*$ for each arrow $e$ on $C$ by (CK2). Thus $CC^*=sC$ using (CK2) and (E).
\end{proof}\\

 $L(\Gamma)$ is a $\mathbb{Z}$-graded $*$-algebra: the $\mathbb{Z}$-grading on $ \mathbb{F}  \tilde{\Gamma}$ is given by $deg(v)=0$ for $v$ in $V$, $deg(e)=1$ and $deg(e^*)=-1$ for $e$ in $E$. This defines a grading on $L(\Gamma)$ since all the relations are homogeneous. The linear extension of $*$ on paths  to $L(\Gamma)$ induces a grade-reversing involutive anti-automorphism (that is, $deg(a^*)=-deg(a)$ for all homogeneous $a$ in $L(\Gamma)$). Hence the categories of left modules and right modules are equivalent for $L(\Gamma)$.\\

 We may consider $G$-gradings on $L(\Gamma)$ for any group $G$, with the generators $V \sqcup E \sqcup E^*$ being homogeneous. Since $v^2=v$ and $e^*e=te$ we have: (i) $ \lvert  v \rvert_{_{G}} =1_{_{G}}$ and (ii) $\lvert e^*\rvert_{_G}= \lvert e\rvert^{-1}_{_G}$. Conversely, any function from $V \sqcup E \sqcup E^*$ to $G$ satisfying (i) and (ii) defines a $G$-grading on $L(\Gamma)$ as the remaining relations are homogeneous. A morphism (or a refinement) from a $G$-grading to an $H$-grading on the algebra $A$ is given by a group homomorphism $\phi:G \longrightarrow H$ such that for all $h \in H$, $A_h= \oplus_{\phi (g)=h} A_g$ where $A_g:= \{a \in A  :\lvert a\rvert_{_{G}}=g \} \cup \{0\} $. There is a universal (or initial) $G$-grading on $L(\Gamma)$ given by $G=F_E$, the free group on $E$, which is a refinement of all others:

\begin{proposition}
Let $G:= F_{E}$ be the free group on the set of arrows. The $G$-grading defined by $\vert v\vert_{_G}=1$, $\vert e\vert_{_G}= e$ and $\vert e^*\vert_{_G}=e^{-1}$ is an initial object in the category of $G$-gradings of 
$L(\Gamma)$ with the generators $V \sqcup E \sqcup E^*$ being homogeneous.
\end{proposition}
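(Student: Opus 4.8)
The plan is to check the universal property directly: given any object of the category, i.e.\ an $H$-grading of $L(\Gamma)$ (for an arbitrary group $H$) in which $V \sqcup E \sqcup E^*$ are homogeneous, I must produce exactly one grading-morphism from the $F_E$-grading to it, and the conclusion is then just the definition of an initial object. Write $d(x) \in F_E$ for the degree of an $F_E$-homogeneous element $x$ and $|x|_H \in H$ for its degree in the given $H$-grading; as noted just before the proposition, any such $H$-grading automatically satisfies $|v|_H = 1_H$ for $v \in V$ and $|e^*|_H = |e|_H^{-1}$ for $e \in E$. Note also that the $F_E$-grading is itself a legitimate object: the assignment $v \mapsto 1$, $e \mapsto e$, $e^* \mapsto e^{-1}$ satisfies conditions (i) and (ii), hence defines a grading by the remark preceding the statement.

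First I would define the homomorphism. Since $F_E$ is free on $E$, there is a unique group homomorphism $\phi : F_E \to H$ with $\phi(e) := |e|_H$ for all $e \in E$. I claim $\phi(d(x)) = |x|_H$ for every generator $x$: indeed $\phi(d(v)) = \phi(1) = 1_H = |v|_H$, $\phi(d(e)) = \phi(e) = |e|_H$, and $\phi(d(e^*)) = \phi(e^{-1}) = |e|_H^{-1} = |e^*|_H$. Because in any grading the degree of a product of homogeneous elements is the product of their degrees, every monomial $x_1 \cdots x_n$ in the generators (in particular every $pq^*$ of Fact \ref{fact}(ii)) that is $F_E$-homogeneous of degree $g$ is also $H$-homogeneous of degree $\phi(g)$. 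Such monomials span $L(\Gamma)$, so $L(\Gamma)_g \subseteq L(\Gamma)^H_{\phi(g)}$ for every $g \in F_E$, and hence $\bigoplus_{\phi(g) = h} L(\Gamma)_g \subseteq L(\Gamma)^H_h$ for every $h \in H$.

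To see that $\phi$ is a morphism of gradings I must promote these inclusions to equalities. Both $\bigoplus_{h \in H}\big(\bigoplus_{\phi(g) = h} L(\Gamma)_g\big)$ and $\bigoplus_{h \in H} L(\Gamma)^H_h$ equal all of $L(\Gamma)$ as internal direct sums — the first because $L(\Gamma) = \bigoplus_{g \in F_E} L(\Gamma)_g$ — and we have just shown a termwise inclusion of the summands. In general, if $B_h \subseteq C_h$ for all $h$ and $\bigoplus_h B_h = \bigoplus_h C_h$ inside a module, then for $c \in C_h$, writing $c = \sum_h b_h$ with $b_h \in B_h \subseteq C_h$ gives $c - b_h \in C_h \cap \sum_{h' \neq h} C_{h'} = 0$, so $c = b_h \in B_h$; thus $B_h = C_h$. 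Applying this yields $\bigoplus_{\phi(g) = h} L(\Gamma)_g = L(\Gamma)^H_h$ for all $h$, which is precisely the condition that $\phi$ is a morphism from the $F_E$-grading to the $H$-grading.

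Finally, for uniqueness, suppose $\psi : F_E \to H$ is any grading-morphism from the $F_E$-grading to the $H$-grading. For each $e \in E$, the element $e$ is nonzero in $L(\Gamma)$ (a standard fact about Leavitt path algebras over a nonzero ${\bf k}$) and is $F_E$-homogeneous of degree $e$; since $\psi$ is a morphism, $e \in L(\Gamma)^H_{|e|_H} = \bigoplus_{\psi(g) = |e|_H} L(\Gamma)_g$, and comparing $F_E$-homogeneous components forces $\psi(e) = |e|_H = \phi(e)$. As $F_E$ is free on $E$, this gives $\psi = \phi$. None of the individual steps is deep; the only points that want a little care are the direct-sum comparison in the previous paragraph and, for uniqueness, the (standard) nonvanishing of the generators, together with the initial observation that the $F_E$-grading really is a grading.
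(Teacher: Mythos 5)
Your proof is correct and takes exactly the same route as the paper: the paper's entire proof is the one-line definition $\phi(e) := \vert e\vert_{_H}$, and you simply spell out the verifications it leaves implicit (that $\phi$ really is a morphism of gradings, via the direct-sum comparison, and that it is the unique one, via $e \neq 0$). Both of those filled-in steps are sound, so this is a more detailed write-up of the same argument rather than a different proof.
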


\begin{proof}
For any $H$-grading let $\phi: G \rightarrow H$ be the homomorphism given by $\phi(e)= \vert e\vert_{_H}$.
\end{proof}\\

The universal grading will be used later in the construction of a ${\bf k}$-basis for $L_{\bf k}(\Gamma)$. Another application is Proposition \ref{Path}(i) showing that the path algebra ${\bf k}\Gamma$ may be identified with a subalgebra of $L_{\bf k}(\Gamma)$. When ${\bf k}$ is a field, a (different) proof of this basic fact was originally given in \cite[Lemma 1.6]{goo09}.\\

A subset $H$ of $V$ is \textit{hereditary} if $v$ is a \textit{successor} of $u$ (that is, $u\leadsto v$) and $u\in H$ implies that $v\in H$. The subset $H$ is \textit{saturated} if  $te \in H $ for all $e$ in $s^{-1}(v)$ implies that $v\in H$ for each non-sink $v$ with $s^{-1}(v)$ finite. The \textit{hereditary saturated closure} of a subset $X$ of $V$, denoted by $\Bar{X}$, is the smallest hereditary saturated subset of $V$ containing $X$. The hereditary closure of $X\subseteq V$ is $\{ v\in V \>\vert \> x \leadsto v \textit{ for some } x \in X\}$. When $\Gamma$ is row-finite, the hereditary saturated closure of $X\subseteq V$ consists of all vertices $v\in V$ such that every path starting at $v$ and ending at a sink and every infinite path starting at $v$ meets a successor of some $x \in X$.\\

The full subgraph on a hereditary saturated $H \subseteq V$ is denoted by $\Gamma_H$. We obtain the subgraph $\Gamma_{/H}$ by deleting all the vertices in $H$ and all arrows touching them, so $\Gamma_{/H}$ is the full subgraph on $V\setminus H$.\\

When ${\bf k}$ is a field, the following result is well-known   \cite[Lemma 2.4.3 and Corollary  2.4.13(i) ]{aam17}.

\begin{proposition} \label{hereditary} 
Let $\Gamma$ be a row-finite digraph and ${\bf k}$ a commutative ring with 1. \\
(i) If $I$ is an ideal of $L_{\bf k}(\Gamma)$  and $\lambda \in {\bf k}$ then $\{v\in V \> \vert \> \lambda v \in I \}$ is a hereditary saturated subset of $V$.  \\      
(ii) If $H$ is a hereditary saturated subset of $V$ and $\Gamma_{/H}$ as above then $L_{\bf k}(\Gamma_{/H}) \cong L_{\bf k}(\Gamma)/ I$ as $\mathbb{Z}$-graded $*$-algebras 
where $I:=(H)$ is the ideal generated by $H$. Also, $vL_{\bf k}(\Gamma_{/H}) \cong vL_{\bf k}(\Gamma)/ vI$ for all $v \notin H$.
\end{proposition}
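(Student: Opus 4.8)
The plan is to prove (i) by directly checking the two closure conditions using the Cuntz--Krieger relations, and (ii) by producing an explicit pair of mutually inverse $\mathbb{Z}$-graded $*$-homomorphisms between $L_{\bf k}(\Gamma_{/H})$ and $L_{\bf k}(\Gamma)/(H)$; the argument is essentially the one familiar from the field case, and the commutativity of ${\bf k}$ enters only in keeping $\lambda$ central. For (i), set $S:=\{v\in V\mid\lambda v\in I\}$. To see $S$ is hereditary, take $u\in S$ and a path $p$ with $sp=u$, $tp=v$; repeated use of (CK1), (E), (V) gives $p^*p=tp=v$ and $up=p$, so $\lambda v=\lambda p^*p=p^*(\lambda u)p\in I$ because $I$ is a two-sided ideal and $\lambda u\in I$. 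To see $S$ is saturated, let $v$ be a non-sink with $s^{-1}(v)$ finite and $te\in S$ for every $e\in s^{-1}(v)$; then (CK2) and (E) give $\lambda v=\sum_{se=v}e(\lambda\,te)e^*\in I$, hence $v\in S$. (Row-finiteness is not needed for (i).)

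For (ii), define $\pi:L_{\bf k}(\Gamma)\to L_{\bf k}(\Gamma_{/H})$ on generators by $v\mapsto v$ for $v\notin H$, $v\mapsto 0$ for $v\in H$, by $e\mapsto e$ and $e^*\mapsto e^*$ when $e$ is an arrow of $\Gamma_{/H}$ (equivalently $se\notin H$ and $te\notin H$), and by $e\mapsto 0$, $e^*\mapsto 0$ otherwise; since $H$ is hereditary, $se\in H$ already forces $te\in H$, so these cases are exhaustive. I would then check that $\pi$ carries each defining relation (V), (E), (CK1), (CK2) of $L_{\bf k}(\Gamma)$ to a relation valid in $L_{\bf k}(\Gamma_{/H})$. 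The only delicate case is (CK2) at a vertex $v\notin H$: saturation of $H$ forces a non-sink $v$ of $\Gamma$ to remain a non-sink of $\Gamma_{/H}$ (otherwise every arrow out of $v$ would land in $H$, whence $v\in H$), and $s^{-1}_{\Gamma_{/H}}(v)=\{e\in s^{-1}_\Gamma(v)\mid te\notin H\}$, so applying $\pi$ to $v=\sum_{se=v}ee^*$ kills exactly the terms with $te\in H$ and leaves the defining (CK2) relation of $L_{\bf k}(\Gamma_{/H})$ at $v$. Thus $\pi$ is a well-defined surjective $\mathbb{Z}$-graded $*$-homomorphism with $(H)\subseteq\ker\pi$, so it descends to a surjection $\bar\pi:L_{\bf k}(\Gamma)/(H)\to L_{\bf k}(\Gamma_{/H})$.

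To show $\bar\pi$ is injective I would build its inverse $\psi:L_{\bf k}(\Gamma_{/H})\to L_{\bf k}(\Gamma)/(H)$, sending each generator of $\Gamma_{/H}$ to its class modulo $(H)$. Relations (V), (E), (CK1) are inherited verbatim; for (CK2) at a non-sink $v$ of $\Gamma_{/H}$ (which is then a non-sink of $\Gamma$, so that $v=\sum_{e\in s^{-1}_\Gamma(v)}ee^*$ holds in $L_{\bf k}(\Gamma)$ by row-finiteness), each term $ee^*=e(te)e^*$ with $te\in H$ lies in $(H)$, so modulo $(H)$ the relation becomes $v=\sum_{e\in s^{-1}_{\Gamma_{/H}}(v)}ee^*$, exactly what $\psi$ must respect. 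Comparing on generators shows $\psi$ and $\bar\pi$ are mutually inverse --- for a generator $e$ with $\pi(e)=0$ one uses $e=e\,te\in(H)$ (if $te\in H$) or $e=se\,e\in(H)$ (if $se\in H$) --- so $\bar\pi$ is an isomorphism of $\mathbb{Z}$-graded $*$-algebras. Finally, for $v\notin H$ the algebra isomorphism $\bar\pi$ restricts to $vL_{\bf k}(\Gamma_{/H})\cong v\bigl(L_{\bf k}(\Gamma)/(H)\bigr)\cong vL_{\bf k}(\Gamma)/\bigl(vL_{\bf k}(\Gamma)\cap(H)\bigr)$, and $vL_{\bf k}(\Gamma)\cap(H)=v(H)=vI$ since $x\in vL_{\bf k}(\Gamma)$ forces $vx=x$; this gives the stated $vL_{\bf k}(\Gamma_{/H})\cong vL_{\bf k}(\Gamma)/vI$.

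I expect the only real content to be checking that $\pi$ and $\psi$ respect (CK2): this is precisely where the \emph{hereditary} hypothesis is used (for $\pi$, via ``$se\in H\Rightarrow te\in H$'', so arrows and their duals are killed consistently) and where the \emph{saturated} hypothesis is used (so that vertices outside $H$ that are non-sinks of $\Gamma$ stay non-sinks of $\Gamma_{/H}$, keeping the (CK2) relations in step). Everything else is routine bookkeeping, with row-finiteness entering only to ensure that every non-sink carries a finite (CK2) relation.
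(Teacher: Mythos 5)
Your proof is correct and follows essentially the same route as the paper: part (i) via (CK1)/(CK2) manipulations inside the ideal, and part (ii) by constructing the two graded $*$-homomorphisms on generators, checking (CK2) with the hereditary and saturated hypotheses exactly where you say they are needed, and verifying they are mutually inverse. The only (harmless) differences are that you present the quotient map before the section rather than after, and you spell out the identification $vL_{\bf k}(\Gamma)\cap(H)=vI$ for the corner statement, which the paper leaves as a one-line remark.
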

\begin{proof}
(i) If $e \in E$ with $se =v$
and $\lambda v \in I$ then $
\lambda te =\lambda        e^*e=e^*(\lambda v)e \in I$, hence $\{v\in V \> \vert \> \lambda v \in I \}$ is hereditary. If $v$ is not a sink and $\lambda te \in I$ for all $e$ with $se =v$ then $\lambda v=\lambda \sum ee^*=\sum e(\lambda te)e^* \in I$, hence $\{v\in V \> \vert \> \lambda v \in I \}$ is saturated. \\

(ii) We define a $*$-homomorphism from $L_{\bf k}(\Gamma_{/H})$ to $ L_{\bf k}(\Gamma)/ (H)$ by sending all vertices $v$ to $v+(H)$ and all arrows $e$ to $e+(H)$. The relations $(V)$, $(E)$ and $(CK1)$ are automatically satisfied. If $v\notin H$ is not a sink in $\Gamma$ then $v$ is not a sink $\Gamma_{/H}$ since $H$ is saturated. In $L_{\bf k}(\Gamma)$   $$v=\sum_{se=v \>, \> 
te \notin H} ee^* +\sum_{sf=v\>, \>
tf \in H} ff^*$$ 

by $(CK2)$ in $L_{\bf k}(\Gamma)$. Since the second sum is in $(H)$ we see that $(CK2)$ in $L_{\bf k}(\Gamma_{/H})$ is satisfied and thus we have a homomorphism. \\

We define a $*$- homomorphism from $L_{\bf k}(\Gamma)$  to $L_{\bf k}(\Gamma_{/H})$ by sending $v$ to $v$ if $v \notin H$ and to 0 otherwise, similarly $e$ to $e$ if $e$ is an arrow in $\Gamma_{/H}$ and to 0 otherwise. The relations $(V)$ and $(E)$ are immediate to verify. Most cases of $(CK1)$ are also easy to check. For the case of $e^*e=te $, if $te \notin H$ then $se \notin H$ since $H$ is hereditary, hence $e$ is in $\Gamma_{/H}$ and the relation holds. When $v$ is not a sink in $\Gamma$ then $v$ is not a sink in $\Gamma_{/H}$, breaking up the right-hand side of $(CK2)$ as above shows that the relation is satisfied. The ideal $(H)$ is in the kernel of this homomorphism, so we have the induced homomorphism from $L(\Gamma)/(H)$ to $L(\Gamma_{/H})$.\\

The $*$-homomorphisms above are both $\mathbb{Z}$-graded and are inverses of each other. The last assertion follows from restricting the first homomorphism to $vL_{\bf k}(\Gamma_{/H})$.
\end{proof}\\

\begin{fact} \label{exit}
If $p=e_1e_2 \cdots e_n$ let $X_p:=\bigsqcup_{k=1}^n \{ e_1e_2\cdots e_{k-1}f \vert  \>  e_k \neq f \in E\>, \> \> sf=se_k \} \qquad $ then 
$$p^*p-pp^*= sp-pp^*=\sum_{q\in X_p} qq^*.$$
\end{fact}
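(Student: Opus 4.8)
The plan is to prove the substantive identity $sp - pp^* = \sum_{q \in X_p} qq^*$ by induction on $n := l(p)$; the leftmost equality is routine. Applying (CK1) in the form $e^*e = te$ together with (E) collapses $p^*p = e_n^* \cdots e_1^* e_1 \cdots e_n$ one pair at a time down to $t e_n = tp$, so $p^*p - pp^* = tp - pp^*$, which is $sp - pp^*$ whenever $p$ is closed --- in particular for cycles, the situation in which this Fact is used (compare Fact~\ref{fact}(iii)).

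For the base case $n = 1$, relation (CK2) at the vertex $v := se_1$ reads $v = e_1 e_1^* + \sum_{f \ne e_1,\ sf = v} ff^*$, and since $X_{e_1} = \{\, f \in E : f \ne e_1,\ sf = se_1 \,\}$ this is precisely $sp - pp^* = \sum_{q \in X_p} qq^*$. For the inductive step, write $p = e_1 p'$ with $p' = e_2 \cdots e_n$, so $pp^* = e_1\, p'(p')^*\, e_1^*$; the inductive hypothesis gives $p'(p')^* = sp' - \sum_{q' \in X_{p'}} q'(q')^*$, and using $sp' = te_1$ together with $e_1(te_1) = e_1$ and $(te_1)e_1^* = e_1^*$ from (E) we obtain
$$pp^* = e_1 e_1^* - \sum_{q' \in X_{p'}} (e_1 q')(e_1 q')^*.$$
The one point to verify with care is that prepending $e_1$ carries $X_{p'}$ bijectively onto $X_p$ with its position-$1$ block $Y := \{\, f \in E : f \ne e_1,\ sf = se_1 \,\}$ deleted: shifting the block indices of $X_{p'}$ up by one turns the blocks at positions $1, \dots, n-1$ into the blocks of $X_p$ at positions $2, \dots, n$, each element gaining the prefix $e_1$. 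Granting this and invoking (CK2) at $se_1$ once more, in the form $se_1 - e_1 e_1^* = \sum_{f \in Y} ff^*$, we conclude
$$sp - pp^* = \sum_{f \in Y} ff^* + \sum_{q \in X_p \setminus Y} qq^* = \sum_{q \in X_p} qq^*,$$
which closes the induction.

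The only hypothesis actually used is that (CK2) is available at each $se_k$; this is automatic in the row-finite setting, since $se_k$ emits $e_k$ and so is a regular vertex. Apart from that, the argument is bookkeeping: keeping the disjoint-union indices straight and noting that $pp^*$ and every $qq^*$ with $q \in X_p$ is an idempotent $\le sp$, so that the identity is genuinely a decomposition of $sp$. I expect the index shuffling between $X_p$ and $X_{p'}$, together with correctly peeling off the position-$1$ block $Y$ on which (CK2) acts directly, to be the main --- and only real --- obstacle.
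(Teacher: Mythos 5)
Your proof is correct and follows essentially the same route as the paper's: induction on $n$ with (CK2) at $se_1$ as the base case, and the decomposition $X_p = X_{e_1} \sqcup e_1.X_{p'}$ together with $e_1(se_2)e_1^* = e_1e_1^*$ driving the inductive step. Your side remark that (CK1) collapses $p^*p$ to $tp$ rather than $sp$ (so the leftmost equality as literally stated needs $p$ closed) is a fair point of extra care; the paper simply asserts $p^*p = sp$.
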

\begin{proof}
$p^*p=sp$ by (CK1). We will induct on $n$. If $n=1$ this is (CK2). If $n>1$ then induction hypothesis on $p':=e_2\cdots e_n$ with 
$X_p=X_{e_1} \sqcup e_1.X_{p'}$ and $e_1(se_2)e_1^*=e_1e_1^*$ gives the desired result.  
\end{proof}\\

\begin{example} \label{example} Consider the following digraphs:\\

$ \bullet_v \qquad \qquad \bullet_u {\stackrel{e}{\longrightarrow}} \bullet_v \quad  \quad \qquad  \xymatrix{{\bullet}^{v} \ar@(ul,ur) ^{e}}\quad \quad \xymatrix{  &\> \> { \bullet}_{v} \ar@(ul,ur) ^{e}} \xymatrix{ \stackrel{f}{\longrightarrow} \>   \bullet_{w} }$ 
$$\Gamma_1 \qquad \quad \quad \quad \Gamma_2 \quad \qquad \qquad  \quad \Gamma_3 \qquad \qquad  \qquad \qquad \Gamma_4 \qquad\qquad \qquad \quad $$ 

 (i) $\mathbb{F}\Gamma_1=\mathbb{F}v =L(\Gamma_1)$.\\

(ii)  $L(\Gamma_2) \cong M_2(\mathbb{F})$ where $u \leftrightarrow E_{11}\> , \quad v \leftrightarrow E_{22} \> , \quad e \leftrightarrow E_{12} \> , \quad e^* \leftrightarrow E_{21}$.\\
More generally, if  $\Gamma$ has no cycles then $L(\Gamma)$ is isomorphic to a direct sum of matrix algebras $M_n(\mathbb{F})$, each summand corresponds to a sink $w$ and $n=\vert P_w \vert $, the number of paths ending at $w$ (Proposition 3.5 \cite{aam07}, this also follows from Theorem \ref{basis} below).\\

(iii) $\mathbb{F}\Gamma_3 \cong \mathbb{F}[x] $ and $L(\Gamma_3) \cong \mathbb{F}[x,x^{-1}]$ where $v \leftrightarrow 1 , \quad e \leftrightarrow x^{-1}, \quad e^* \leftrightarrow x$. \\
More generally, if the cycles of $\Gamma$ have no exits then $L(\Gamma)$ is isomorphic to a direct sum of matrix algebras $M_n(\mathbb{F})$ or $M_n(\mathbb{F}[x,x^{-1}])$ with each summand $M_n(\mathbb{F})$ corresponding to a sink as in (ii) and each summand $M_n(\mathbb{F}[x,x^{-1}])$ corresponding to 
a cycle $C$ with $n=\vert P_{sC} \vert $ (Theorems 1.8 and 3.8 in \cite{aam08}, this also follows from Theorem \ref{basis} below).\\

(iv) $L(\Gamma_4) \cong  \mathbb{F}\langle x, y \rangle /(1-xy)$ where $x \leftrightarrow e^*+f^* ,\quad y \leftrightarrow e+f,  \quad 1\leftrightarrow v+w$ also $v \leftrightarrow yx , \quad e \leftrightarrow y^2x , \quad e^*\leftrightarrow yx^2$.
This is a graded isomorphism of $*$-algebras where $\vert x\vert=-1$, $\vert y\vert=1$ and $x=y^*$. The only proper nontrivial graded ideal of the Jacobson algebra $\mathbb{F} \langle  x, y \rangle /(1-xy)$ is generated by $1-yx \leftrightarrow w$.

\end{example}

 \medskip
\subsection{$L(\Gamma)$-modules and Quiver Representations}
\medskip

The following result identifying the category of unital $L(\Gamma)$-modules with a full subcategory of quiver representations of $\Gamma$ enables us to construct many $L(\Gamma)$-modules and to derive some fundamental properties. A right $R$-module $M$ is \textit{unital} if $ M=MR$ where $R$ may not have 1. This definition agrees with the usual definition of a unital module when $R$ has 1. All our modules will be right modules unless specified otherwise.\\

 Recall that a quiver representation is a functor from the small category (also denoted by $\Gamma$) with objects $V$ and morphisms given by paths in $\Gamma$, to the category of unital {\bf k}-modules where {\bf k} is a commutative ring with 1 (classically {\bf k} is a field \cite{dw05}). The coefficient ring ${\bf k}$ is suppressed in the statement of the following proposition. 
 
 \begin{proposition} \label{teorem} 
If $\Gamma= (V,E,s,t)$ is a row-finite digraph then $Mod_{L(\Gamma)}$, the category of unital right $L(\Gamma)$-modules, 
 is equivalent to the full subcategory of quiver representations $\rho$ of $\Gamma$ satisfying the following condition $(I)$:\\
  $$\textit{For every nonsink } v\in V, \> \> \> \>  \bigoplus_{se=v} \rho(e): \rho ( v) \longrightarrow \bigoplus\limits_{se=v } \rho(te) \textit{ is an isomorphism.}$$ The right $L(\Gamma)$-module corresponding to the quiver representation $\rho$ is $M=\bigoplus_{v\in V} \rho(v)$ as a ${\bf k}$-module and the actions of generators of  $L(\Gamma)$ are given by the compositions
  $$\cdot v : M=\bigoplus_{u \in V} \rho(u) \stackrel{pr_{\rho(v)}}{\longrightarrow} \rho(v) \hookrightarrow \bigoplus_{u \in V} \rho(u)=M ,$$
  $$\cdot e : M=\bigoplus_{v\in V} \rho(v) \stackrel{pr_{se}}{\longrightarrow }\rho(se) \stackrel{\rho(e)}{\longrightarrow }\rho(te) \hookrightarrow \bigoplus \rho(v)=M ,$$ 
  $$\cdot e^* : M=\bigoplus_{v\in V} \rho(v) \stackrel{pr_{te}}{\longrightarrow }\rho(te) \hookrightarrow   \bigoplus_{sf=se} \rho(tf) \stackrel{(\bigoplus \rho(f))^{-1}}{ \longrightarrow} \rho(se) \hookrightarrow \bigoplus_{v \in V} \rho(v) =M .$$ 
  
\end{proposition}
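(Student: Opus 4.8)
The plan is to establish the equivalence by constructing a functor in each direction and showing they are mutually quasi-inverse. Given a quiver representation $\rho$ of $\Gamma$ satisfying $(I)$, one forms the ${\bf k}$-module $M=\bigoplus_{v\in V}\rho(v)$ equipped with the three families of operators $\cdot v$, $\cdot e$, $\cdot e^*$ written in the statement, and must check that these define a \emph{unital} right $L(\Gamma)$-module; note that $\cdot e^*$ is even defined only \emph{because} of $(I)$, which supplies the inverse of $\bigoplus_{se=v}\rho(e)$. In the other direction, given a unital right $L(\Gamma)$-module $M$, set $\rho(v):=Mv$ and let $\rho(p)\colon Mv\to M(tp)$ be right multiplication by a path $p$ from $v$ to $tp$; relations (V) and (E) make this a well-defined quiver representation of $\Gamma$, and one must verify it satisfies $(I)$. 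On morphisms, a natural transformation $\eta\colon\rho\to\rho'$ induces $\bigoplus_v\eta_v\colon M\to M'$, while a module map $\phi\colon M\to M'$ restricts to the family $\{\phi|_{Mv}\}_{v\in V}$; since the subcategory of representations is full, that these assignments are mutually inverse bijections on $\mathrm{Hom}$-sets (and functorial) is immediate once the object-level statements are in hand, because naturality with respect to an arrow $e$ is exactly compatibility with $\cdot e$ — and then with $\cdot e^*$, routed through the inverse of $\bigoplus_{se=v}\rho(e)$.

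For the first direction, (V) holds because $\cdot v$ is the idempotent projection of $M$ onto the summand $\rho(v)$, and (E) holds because precomposing $\cdot e$ with $\cdot(se)$ or postcomposing with $\cdot(te)$ leaves it unchanged. The two Cuntz--Krieger relations carry the real content. Writing $\Phi_v:=\bigoplus_{sg=v}\rho(g)\colon\rho(v)\xrightarrow{\ \sim\ }\bigoplus_{sg=v}\rho(tg)$, the operator $\cdot e^*$ is the inclusion of $\rho(te)$ as the $e$-th summand followed by $\Phi_{se}^{-1}$, while $\cdot f$ is $\rho(f)=pr_f\circ\Phi_{sf}$ preceded by the projection onto $\rho(sf)$; composing $\cdot e^*$ with $\cdot f$ (which is $0$ unless $sf=se$), the middle piece $\rho(f)\circ\Phi_{se}^{-1}=pr_f\circ\Phi_{se}\circ\Phi_{se}^{-1}=pr_f$ collapses, and what survives is $\delta_{e,f}$ times the projection onto $\rho(te)$, i.e. $\delta_{e,f}(te)$, which is (CK1). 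For (CK2), for a nonsink $v$ with $s^{-1}(v)$ finite, the operator $\sum_{se=v}(\cdot e)(\cdot e^*)$ restricted to $\rho(v)$ equals $\Phi_v^{-1}\circ\bigl(\bigoplus_{se=v}\rho(e)\bigr)=\Phi_v^{-1}\circ\Phi_v=\mathrm{id}_{\rho(v)}=\cdot v$; this is the only place condition $(I)$ is genuinely used. Finally $M$ is unital because $Mv=\mathrm{im}(\cdot v)=\rho(v)$, so $M=\bigoplus_v Mv=ML(\Gamma)$.

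For the second direction, one first records that a unital $M$ splits as $M=\bigoplus_{v\in V}Mv$: the $Mv$ span $M$ because $L(\Gamma)=\bigoplus_{v,w}vL(\Gamma)w$ and $M=ML(\Gamma)$, and they are independent because distinct vertices are orthogonal idempotents. Then $(I)$ for $\rho=\{\,Mv,\ \rho(e)=\text{right mult.\ by }e\,\}$ says precisely that $Mv\to\bigoplus_{se=v}M(te)$, $m\mapsto(me)_{se=v}$, is an isomorphism, with inverse $(n_e)_{se=v}\mapsto\sum_{se=v}n_e e^*$; that the proposed inverse has image in $Mv$ and that the two composites are identities follows from (E), (CK1) and, decisively, (CK2) in the form $\sum_{se=v}ee^*=v$. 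It then remains to see the two functors are mutually quasi-inverse: starting from $\rho$ satisfying $(I)$, building $M$ and then reading off $Mv$ returns $\rho(v)=Mv$ exactly, and right multiplication by $e$ on $M$ equals the defined $\cdot e$, so $\rho$ is recovered on the nose; starting from $M$, building $\rho$ and then $\bigoplus_v Mv$ returns $M$ as a ${\bf k}$-module, and relations (V), (E) force the reconstructed operators $\cdot v$, $\cdot e$, $\cdot e^*$ to coincide with the original ones — for instance $ve=e=(se)e$ yields $me=m_{se}\,e$ for $m=\sum_u m_u$.

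To the extent there is a hard part, it is organizational rather than conceptual: keeping the bookkeeping of the three operator families consistent through the verifications of (CK1) and (CK2), and making everything uniform over a possibly infinite vertex set with $L(\Gamma)$ non-unital — which is handled by arguing summand by summand via the idempotents $v$ and using row-finiteness only to know each index set $\{e\in E\mid se=v\}$ is finite. The conceptual heart of the proof is simply that condition $(I)$ is the module-theoretic reformulation of (CK2), with (CK1) becoming automatic once $\cdot e^*$ is defined through $\Phi_v^{-1}$.
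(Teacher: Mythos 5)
Your argument is correct and is essentially the proof the paper relies on (it defers to \cite[Theorem 3.2]{ko1}, where the same two functors are constructed and checked to be quasi-inverse): condition $(I)$ as the reformulation of (CK2), (CK1) following formally from defining $\cdot e^*$ via $\Phi_{se}^{-1}$, and the splitting $M=\bigoplus_v Mv$ for unital $M$ handling the non-unital algebra. Nothing is missing.
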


The proof of Proposition \ref{teorem} is given in \cite[Theorem 3.2]{ko1} for a field $\mathbb{F}$, but works for a commutative ring ${\bf k}$ with 1. As in the statement of the Proposition \ref{teorem} from now on $Mod_R$ will denote the category of unital right modules of the ring $R$. \\

We denote by ${\bf k}X$, the free ${\bf k}$-module with basis $X$.

\begin{example} \label{nonzero p} 
Let $\rho(v)={\bf k}\mathbb{Z}$ for all $v \in V$. Pick an isomorphism $$\varphi_v: \rho(v) \longrightarrow \bigoplus_{se=v} \rho(te)$$ for each non-sink $v\in V$. Let $\rho(e)$ be the composition 
$$ \rho(se)\stackrel{\varphi_{se}}{\longrightarrow} \bigoplus_{sf=se} \rho(tf) \stackrel{pr_e}{\longrightarrow} \rho(te)$$ 
and $\rho(e^*)$ be the composition
$$ \rho(te)\stackrel{\varphi_{se}}{\hookrightarrow} \bigoplus_{sf=se} \rho(tf) \stackrel{\varphi_{se}^{-1}}{\longrightarrow} \rho(se) .$$ 

Since condition (I) is satisfied by construction, we have an $L_{{\bf k}}(\Gamma)$-module.\\
\end{example}

In the dictionary between a unital $L_{\bf k}(\Gamma)$-module $M$ and a quiver representation $\rho$ of $\Gamma$ satisfying condition (I), the ${\bf k}$-module $\rho(v)$ corresponds to $Mv=\{ m \in M \> \vert \> mv=m \}$ for each vertex $v$. So $M\cong \oplus_{v \in V} Mv$ as a ${\bf k}$-module. 
The {\bf support} of $M$ is $V_M:=\{ v\in V \> \vert \> Mv \neq 0 \}$, the support subgraph of $M$, denoted by $\Gamma_M$, is the full subgraph of $\Gamma$ on $V_M$. Since a module homomorphism $\varphi : M \longrightarrow N$ is made up of ${\bf k}$-linear maps $\varphi_v: Mv \longrightarrow Nv$ for all $v \in V$, both $V_M$ and $\Gamma_M$ are isomorphism invariants. Also, if $0 \longrightarrow A \longrightarrow B \longrightarrow C \longrightarrow 0$ is a short exact sequence of $L_{{\bf k}}(\Gamma)$-modules then $V_B=V_A \cup V_C$.\\

Immediate consequences of Proposition \ref{teorem} and Example \ref{nonzero p} are:

\begin{proposition} \label{pp^*}

(i) Let $M$ be a unital $L_{\bf k}(\Gamma)$-module. For any path $p$ in Path$(\Gamma)$ the ${\bf k}$-module homomorphism $Msp \stackrel{\cdot p}{\longrightarrow} Mtp$ defined by right multiplication with $p$ is onto and the ${\bf k}$-module homomorphism $Mtp \stackrel{\cdot p^*}{\longrightarrow } Msp$ is one-to-one. \\

(ii) For any two paths $p$ and $q$ in $Path \Gamma$ with $tp=tq$ and $\lambda \in {\bf k}$, 
 $\lambda pq^*=0$ in $L_{\bf k}(\Gamma)$ if and only if $\lambda=0$. \\
 
 (iii) If $w$ is a sink then $wL_{\bf k}(\Gamma)w ={\bf k}w \cong {\bf k}$.\\

\end{proposition}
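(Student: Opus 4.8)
The plan is to read all three parts off the equivalence of Proposition \ref{teorem} between unital $L_{\bf k}(\Gamma)$-modules and quiver representations satisfying condition $(I)$, using the module built in Example \ref{nonzero p} as a faithfulness witness for (ii) and (iii).

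For (i): under the dictionary $M\leftrightarrow\rho$, right multiplication by an arrow $e$ carries $Mse=\rho(se)$ into $Mte=\rho(te)$ by $\rho(e)$, which by condition $(I)$ is the composite of the coordinate projection $\bigoplus_{sf=se}\rho(tf)\to\rho(te)$ with the isomorphism $\bigoplus_{sf=se}\rho(f)$; hence $\cdot e\colon Mse\to Mte$ is (split) surjective. Dually, right multiplication by $e^{*}$ restricts to the composite of the coordinate inclusion $\rho(te)\hookrightarrow\bigoplus_{sf=se}\rho(tf)$ with $\bigl(\bigoplus_{sf=se}\rho(f)\bigr)^{-1}$, so $\cdot e^{*}\colon Mte\to Mse$ is (split) injective. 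For a path $p=e_{1}\cdots e_{n}$, right multiplication by $p$ is the composition $\cdot e_{n}\circ\cdots\circ\cdot e_{1}$ of the surjections $Mse_{i}\to Mte_{i}=Mse_{i+1}$, hence onto; right multiplication by $p^{*}=e_{n}^{*}\cdots e_{1}^{*}$ is the composition $\cdot e_{1}^{*}\circ\cdots\circ\cdot e_{n}^{*}$ of injections, hence one-to-one. (For a length-$0$ path these are identities.)

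For (ii), the only nontrivial direction is that $\lambda\neq 0$ forces $\lambda pq^{*}\neq 0$, and here the delicate point over a general commutative ring is the presence of zero divisors, which is the main obstacle; the remedy is to track an honest free generator through the maps of (i). Given $p,q$ with $tp=tq$, apply Example \ref{nonzero p} with $\rho(v)={\bf k}\mathbb{Z}$ for every $v$, obtaining a module $M$ with every $Mv\cong{\bf k}\mathbb{Z}$. Fix a basis element $y_{0}\in Mtp$, so $\mu\mapsto\mu y_{0}$ is injective. By (i) choose $m\in Msp$ with $mp=y_{0}$; then, as the action is ${\bf k}$-linear and ${\bf k}$ is central, $m(\lambda pq^{*})=\lambda\bigl((mp)q^{*}\bigr)=(\lambda y_{0})q^{*}$. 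If $\lambda\neq 0$ then $\lambda y_{0}\neq 0$, and since $\cdot q^{*}\colon Mtq\to Msq$ is injective by (i) we get $(\lambda y_{0})q^{*}\neq 0$, whence $\lambda pq^{*}\neq 0$; the converse is trivial.

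For (iii): by Fact \ref{fact}(ii), $L_{\bf k}(\Gamma)$ is spanned by the elements $pq^{*}$ with $tp=tq$, and for such an element $w\,pq^{*}\,w=(wp)(q^{*}w)$ vanishes unless $sp=w=sq$; since $w$ is a sink, the only path with source $w$ is $w$ itself, so $p=q=w$ and $pq^{*}=w$. Hence $wL_{\bf k}(\Gamma)w\subseteq{\bf k}w$, the reverse inclusion being clear from $w=www$, so $wL_{\bf k}(\Gamma)w={\bf k}w$; finally ${\bf k}w\cong{\bf k}$ because $\mu\mapsto\mu w$ is injective, which is the special case $p=q=w$ of (ii).
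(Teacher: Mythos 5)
Your proof is correct and follows essentially the same route as the paper's: part (i) from condition $(I)$ of Proposition \ref{teorem} by composing the surjections $\cdot e$ and injections $\cdot e^{*}$, part (ii) by using the free-module representation of Example \ref{nonzero p} as a faithfulness witness, and part (iii) by reducing the spanning set $pq^{*}$ of Fact \ref{fact}(ii) to $p=q=w$ since a sink admits no nontrivial outgoing path. The only difference is cosmetic — you track a single basis element through $\cdot p$ and $\cdot q^{*}$ where the paper notes that the image of $\cdot pq^{*}$ is a free module of infinite rank — and both arguments are sound.
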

\begin{proof}
 (i) By the condition (I) in Proposition \ref{teorem}, $Mse \stackrel{\cdot e}{\longrightarrow} Mte $ is onto for all $e \in E$, hence $Msp \stackrel{\cdot p}{\longrightarrow} Mtp $ is also onto. Similarly, $Mte \stackrel{\cdot e^*}{\longrightarrow} Mse $ is one-to-one, so $Mtp \stackrel{\cdot p^*}{\longrightarrow} Msp $ is also one-to-one.\\

(ii) Given an $L_{\bf k}(\Gamma)$-module $M$ as in the Example \ref{nonzero p} above right multiplication by $pq^*$ with $tp=tq$ defines a ${\bf k}$-module homomorphism from $Msp$ to $Msq$ whose image is ${\bf k}\mathbb{Z}$, a free ${\bf k}$-module of infinite rank by (i). Hence $\lambda pq^*=0$ if and only if $\lambda=0$.\\

(iii) Since $w$ is a sink and $wL_{\bf k}(\Gamma)w$  is spanned by $pq^*$ with $sp=w=sq$, we see that $p=q=w=pq^*$. Hence $wL_{\bf k}(\Gamma)w ={\bf k}w \cong {\bf k}$ by (ii) above.
\end{proof}\\

 When ${\bf k}$ is a field, part (i) of the following proposition is in \cite[Lemma 1.6]{goo09} and part (ii) is in  \cite[Lemma 2.2.7]{aam17}. \\
 
\begin{proposition} \label{Path}
(i) The homomorphism from ${\bf k}\Gamma$ to $L_{\bf k}(\Gamma)$ sending every vertex and every arrow to itself is one-to-one.\\
(ii) If $C$ is a cycle with no exit then $vL_{{\bf k}}(\Gamma)v \cong {\bf k}[x,x^{-1}]$ where $v=sC$ and $x \leftrightarrow C^*$.\\
(iii) If $w\in V$ is a sink then  
$\{p^* \> \vert \> p \in Path \Gamma ,\> tp=w \}$ is a ${\bf k}$-basis  for the $L_{\bf k}(\Gamma)$-module $wL_{\bf k}(\Gamma)$ and $wL_{\bf k}(\Gamma)$ is simple if and only if ${\bf k}$ is a field. 
\end{proposition}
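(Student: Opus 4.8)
The plan is to establish the three assertions in order: first that $\{p^* \mid p \in \mathrm{Path}\,\Gamma,\ tp = w\}$ spans $wL_{\bf k}(\Gamma)$ as a ${\bf k}$-module, then that it is ${\bf k}$-linearly independent, and finally to deduce the simplicity criterion. For the spanning statement I would start from Fact \ref{fact}(ii): $L_{\bf k}(\Gamma)$ is spanned by the elements $pq^*$ with $tp = tq$. Left-multiplying by $w$ and using the relations (V) and (E), the element $w\cdot pq^*$ vanishes unless $sp = w$; since $w$ is a sink, the only path $p$ with $sp = w$ is the length-zero path $p = w$ itself, so $w\cdot pq^* = q^*$ with $tq = w$ (and $w q^* = q^*$ automatically). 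Hence $wL_{\bf k}(\Gamma)$ is spanned by $\{q^* \mid tq = w\}$, which is the claimed set. (Note these $q$ may be taken to have $w \notin \mathrm{int}(q)$, i.e. $q \in P_w$, since $w$ is a sink and a path cannot pass through a sink before ending there — but this refinement is not needed for the argument.)

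For linear independence I would use the module of Example \ref{nonzero p}, exactly as in the proof of Proposition \ref{pp^*}(ii): set $\rho(v) = {\bf k}\mathbb{Z}$ for every $v$, pick the isomorphisms $\varphi_v$, and consider the resulting module $M$. Suppose $\sum_i \lambda_i p_i^* = 0$ with the $p_i$ distinct paths ending at $w$. Acting on $Mw = {\bf k}\mathbb{Z}$, each $p_i^*$ sends $Mw = M t p_i$ injectively into $M s p_i$ (Proposition \ref{pp^*}(i)), with images landing in the distinct summands $\rho(s p_i)$; more precisely, right multiplication by $p_i^*$ followed by $p_i$ is the identity on $Mw$, while $p_j p_i^*$ for $j \ne i$ has image in a summand $\rho(sp_i)$ but composed appropriately one gets orthogonality. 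The cleanest route: apply $\sum_i \lambda_i p_i^*$ and then hit it with $p_j$ on the right; since $p_j p_i^* = \delta_{ij} w + (\text{lower-order, or terms supported away from } Mw)$ — here I would invoke Fact \ref{fact}(i), which forces $p_j p_i^* $ to be zero on $Mw$ unless one of $p_i,p_j$ is an initial segment of the other, and distinctness plus $tp_i = tp_j = w$ with $w$ a sink then forces $p_i = p_j$ — we conclude $\lambda_j w = 0$ on $Mw$, hence $\lambda_j = 0$ since $Mw = {\bf k}\mathbb{Z} \ne 0$. I expect this orthogonality bookkeeping to be the main obstacle: one must be careful that $p_j$ being a proper initial segment of $p_i$ cannot occur when both end at the sink $w$, which is where the sink hypothesis is essential.

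Finally, for the simplicity criterion: if ${\bf k}$ is a field, then by the basis just established $wL_{\bf k}(\Gamma)$ is the span of $\{p^* : tp = w\}$, and the quiver-representation description (Proposition \ref{teorem}) identifies it with the representation $\rho$ having $\rho(v) = {\bf k}P_w^v$ (paths from $v$ to $w$); any nonzero submodule $N$ contains a nonzero element, hence a nonzero element of some $Nv$, and right-multiplying by a suitable path $p \in P_w^v$ maps it onto $Nw \subseteq {\bf k}w = {\bf k}$ (using Proposition \ref{pp^*}(i), the map $\cdot p$ is onto), so $Nw \ne 0$ forces $Nw = {\bf k}w$ since ${\bf k}$ is a field; then $w \in N$ gives $N \supseteq wL_{\bf k}(\Gamma)$, so $N$ is everything. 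Conversely, if ${\bf k}$ is not a field, pick a proper nonzero ideal $\mathfrak{a} \subsetneq {\bf k}$; then $\mathfrak{a}\cdot w L_{\bf k}(\Gamma)$, i.e. the ${\bf k}$-span of $\{\lambda p^* : \lambda \in \mathfrak{a},\ tp = w\}$, is a nonzero proper submodule (nonzero because $\lambda w \ne 0$ for $0 \ne \lambda \in \mathfrak a$ by Proposition \ref{pp^*}(ii), proper because $w \notin \mathfrak a\, wL_{\bf k}(\Gamma)$ again by linear independence over ${\bf k}$), so $wL_{\bf k}(\Gamma)$ is not simple.
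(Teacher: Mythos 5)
Your proof is correct in substance and follows the same outline as the paper's (span, then independence, then the simplicity criterion), but the independence step takes a genuinely different route. The paper gets linear independence of $\{p^* \mid tp=w\}$ in one line by applying the involution $*$: the set $\{p \mid tp=w\}$ is linearly independent inside ${\bf k}\Gamma \hookrightarrow L_{\bf k}(\Gamma)$ by part (i), and $*$ is a ${\bf k}$-linear bijection, so the starred set is independent too. You instead rerun the faithful-representation argument of Example \ref{nonzero p}, multiplying a vanishing combination $\sum_i \lambda_i p_i^*$ on the right by $p_j$ and using the sink hypothesis to rule out one path being a proper initial segment of another. Both work, and both ultimately rest on the same module; the paper's version is shorter only because part (i) has already absorbed that work. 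One notational slip: the product you need is $p_i^* p_j$ (you are multiplying by $p_j$ on the right), not $p_j p_i^*$; Fact \ref{fact}(i) applies to the former and gives $p_i^* p_j = \delta_{ij}\, w$ exactly as you intend.

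The one genuine imprecision is in the simplicity direction when ${\bf k}$ is a field. To conclude $Nw \neq 0$ from $0 \neq n \in Nv$ you cannot cite the surjectivity of $\cdot p : Mv \to Mw$ from Proposition \ref{pp^*}(i): that is a statement about the whole module and does not prevent your particular element $n$ from lying in the kernel of $\cdot p$ for the path $p$ you happened to pick (e.g.\ $n = p_1^* - p_2^*$ is killed by $\cdot p_3$). The path must be chosen adapted to $n$: expand $n = \sum_i \lambda_i p_i^*$ in the basis just constructed with $\lambda_1 \neq 0$ and right-multiply by $p_1$; the orthogonality $p_i^* p_1 = \delta_{i1}\, w$ (the same relation you used for independence) gives $n p_1 = \lambda_1 w \in N$, hence $w \in N$ and $N = wL_{\bf k}(\Gamma)$. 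This is exactly the paper's argument, so the repair is immediate, but the justification as you wrote it does not suffice. Your non-field direction is fine and is marginally more general than the paper's: any proper nonzero ideal $\mathfrak{a}$ works, where the paper takes a maximal one.
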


\begin{proof}
(i) This is a graded homomorphism with respect to the universal grading by the free group on $E$ as described above. Hence the kernel is a graded ideal. Every homogeneous element of ${\bf k}\Gamma$ is of the form $\lambda p$ with $\lambda \in{\bf k}$ and $p \in Path \Gamma$. Since $\lambda p=0$ in $L_{\bf k}(\Gamma)$ only if $\lambda =0$ by Proposition \ref{pp^*}(ii), the claim follows.\\

(ii) Note that $v$ is the multiplicative identity of the corner algebra $vL_{\bf k}(\Gamma)v$. Paths $p$, $q$ in $ \Gamma$ with $sp=v=sq$ can not exit $C$. So, if $tp=tq$ and $p$ and $q$ have positive length we have $p=p_1e$ and $q=q_1e$ for some arrow $e$ on $C$. Thus $pq^*=p_1q_1^*$ since $se=ee^*$ for all $e$ on $C$ by (CK2). Repeating this we see that $vL_{\bf k}(\Gamma)v$ is spanned by $C^n$ where $C^0:=v$ and $C^{-1}:= C^*$ by Fact \ref{fact}(iii). We define an epimorphism from ${\bf k}[x,x^{-1}]$ to $vL_{\bf k}(\Gamma)v$ sending $x$ to $C^*$.
This epimorphism is one-to-one because $\{C^n\> \vert \> n \in \mathbb{Z}\}$ is linearly independent: A finite subset of  $\{C^n\> \vert \> n \in \mathbb{Z}\}$ is mapped to a set of distinct paths in $\Gamma$ after multiplying by a sufficiently high power of $C$, which are linearly independent by (i).\\

(iii) Since $w$ is a sink, $wL_{{\bf k}}(\Gamma)$ is spanned by  $\{p^* \> \vert \> p \in Path \Gamma ,\> tp=w \}$. Applying the anti-automorphism $*$ to this set we get a linearly independent set by (i). Hence  it is linearly independent and a basis  for $wL_{\bf k}(\Gamma)$. \\

If ${\bf k}$ is not a field then $\mathfrak{m}wL_{\bf k}(\Gamma)$ is a non-zero proper submodule of $wL_{\bf k}(\Gamma)$ where $\mathfrak{m}$ is a maximal ideal of ${\bf k}$, hence $wL_{\bf k}(\Gamma)$ is not simple.
If ${\bf k}$ is a field and $0\neq M$ is a submodule of $wL$ then  there is an $m=\sum \lambda_i p_i^* \in M$ with $\lambda_1\neq 0$ and $p_i$ distinct. Now $m(\frac{1}{\lambda_1}p_1)=w \in M$ and $w$ generates $wL_{{\bf k}}(\Gamma)$. Hence $M=wL_{{\bf k}}(\Gamma)$ showing that $wL_{{\bf k}}(\Gamma)$ is simple.
\end{proof}\\

The following Proposition is useful in showing that certain epimorphisms are isomorphisms. When ${\bf k}$ is a field, the last claim about graded homomorphisms is known as the Graded Uniqueness Theorem \cite[Theorem 2.2.15]{aam17}.\\

\begin{proposition} \label{bire-bir}
If $I$ and $J$ are right (respectively, left) ideals of $L_{\bf k}(\Gamma)$ then $I=J$ if and only if $I\cap {\bf k}\Gamma= J \cap  {\bf k}\Gamma $ (respectively, $I \cap {\bf k}\Gamma^*= J \cap  {\bf k}\Gamma^*$). In particular, $I=0$ if and only if  $I\cap {\bf k}\Gamma =0$ (respectively, $I\cap {\bf k}\Gamma^* =0$). Hence a ${\bf k}$-algebra homomorphism $\varphi$ from $L_{\bf k}(\Gamma)$ to a ${\bf k}$-algebra is one-to-one if and only if the restriction of $\varphi$ to ${\bf k}\Gamma$ is one-to-one. If $\varphi$ is a $\mathbb{Z}$-graded homomorphism then $\varphi$ is one-to-one if and only if the restriction of $\varphi$  to ${\bf k}v$ is one-to-one for all $v \in V$.
\end{proposition}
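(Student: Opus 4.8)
The plan is to reduce the whole statement to the single assertion that for every right ideal $I$ of $L:=L_{\bf k}(\Gamma)$
$$I \;=\; (I\cap{\bf k}\Gamma)\,L \qquad (\ast)$$
(and, applying the $*$--anti--automorphism, $J=L\,(J\cap{\bf k}\Gamma^{*})$ for every left ideal $J$). Granting $(\ast)$: for right ideals $I,J$ it forces $I=J$ as soon as $I\cap{\bf k}\Gamma=J\cap{\bf k}\Gamma$ (and symmetrically on the left); the case $J=0$ is the ``in particular'' clause; applied to $I=\ker\varphi$, a two--sided and hence right
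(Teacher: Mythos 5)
Your reduction is sound as far as it goes: the identity $I=(I\cap{\bf k}\Gamma)\,L$ for right ideals (with the starred version for left ideals) does imply the first three assertions, and $(\ast)$ is in fact true for row-finite $\Gamma$ and is, if anything, a cleaner packaging than the paper's ``take $\alpha\in I\setminus J$'' formulation. The problem is that your proposal stops exactly where the mathematics begins: $(\ast)$ is never proved, and it carries essentially all of the content of the proposition. To establish it you need the computation the paper performs. For $\alpha=\sum_i\lambda_ip_iq_i^*\in I$ write $\alpha=\sum_w\alpha w$ over the finitely many vertices $w=sq_i$; if all the $q_i$ with $sq_i=w$ are trivial (in particular if $w$ is a sink) then $\alpha w\in I\cap{\bf k}\Gamma$ already, and otherwise $\alpha w=\alpha w\sum_{se=w}ee^*=\sum_{se=w}(\alpha e)e^*$ by (CK2), where each $\alpha e\in I$ has strictly shorter dual parts; induction on $\max_i l(q_i)$ then gives $\alpha\in(I\cap{\bf k}\Gamma)L$. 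Note that (CK2) is only available at vertices $w$ with $0<\vert s^{-1}(w)\vert<\infty$, so $(\ast)$ silently uses the standing row-finiteness hypothesis; without some such computation the claim is empty.

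Second, even granting $(\ast)$, the final clause (the graded uniqueness statement) does not follow from it alone, and your truncated derivation does not reach it. Knowing $\ker\varphi\cap{\bf k}\Gamma\neq0$ is weaker than knowing $\ker\varphi\cap{\bf k}v\neq0$ for some single vertex $v$: one must use that $\ker\varphi$ is a $\mathbb{Z}$-graded ideal to choose a nonzero homogeneous $\sum_i\lambda_ip_i\in\ker\varphi\cap{\bf k}\Gamma$ with the $p_i$ distinct paths of a common length, and then left-multiply by $p_1^*$, invoking Fact \ref{fact}(i) to annihilate the cross terms and land on a nonzero scalar multiple of a vertex in $\ker\varphi$ (nonzero by Proposition \ref{pp^*}(ii)). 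This extra step has to appear explicitly in a complete write-up.
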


\begin{proof}
If $I \neq J$ then we may assume that there is an $\alpha = \sum_{i=1}^n \lambda_i p_iq_i^* \in I\setminus J $. Since $\alpha \sum v =\alpha $  where the sum is over $\{ v=sq_i \> \vert \> 1\leq i \leq n \}$ there is a vertex $w$ with $\alpha w \in I \setminus J$. If $w$ is a sink then $\alpha w \in ({\bf k}\Gamma \cap I)\setminus ({\bf k}\Gamma \cap J)$ and we are done. If $w$ is not a sink then $\alpha \sum_{se=w} ee^*=\alpha w \in I\setminus J$, so there is $e\in E$ with $\alpha e \in I\setminus J$. This shortens the $q_i^*$s and, repeating this we end up with an element in $(I\cap {\bf k}\Gamma ) \setminus (J\cap {\bf k}\Gamma )$. Thus $I\cap {\bf k}\Gamma \neq J\cap {\bf k}\Gamma $. If $I \neq J$ are left ideals of $L_{\bf k}(\Gamma)$ then $I^*\neq J^*$ are right ideals and we can apply the discussion above to get 
$I^*\cap {\bf k}\Gamma  \neq J^*\cap {\bf k}\Gamma $, hence 
$I\cap {\bf k}\Gamma^*  =(I^*\cap {\bf k}\Gamma)^* \neq (J^*\cap {\bf k}\Gamma)^*=J\cap {\bf k}\Gamma^* $. \\

If $\varphi$ is a one-to-one homomorphism from $L_{\bf k}(\Gamma)$ to a ${\bf k}$-algebra then every restriction of $\varphi$ is also one-to-one. Conversely, if $\varphi$ is not one-to-one then $0\neq Ker \varphi$. Hence $Ker \varphi \cap {\bf k}\Gamma \neq 0$ and the restriction of $\varphi$ to ${\bf k}\Gamma$ is not one-to-one. If $\varphi$ is a graded homomorphism then $0\neq Ker \varphi$ is a graded ideal, so we can find $0\neq \sum \lambda_i p_i \in Ker \varphi \cap {\bf k}\Gamma$ with $p_i$ distinct, $\lambda_i \neq 0$ for each $i$ and all paths $p_i$ having the same length.
Now $p_1^*\sum \lambda_ip_i =\lambda_1 sp_1 \in {\bf k}sp_1 \cap Ker \varphi$ because $p_1^*p_i=0$ for $i=2,3, \cdots , n$
by Fact \ref{fact}(i), since $p_i$s have the same length.
\end{proof}\\

We want to show that most modules obtained from unital modules using standard constructions, such as taking quotients, submodules, sums and extensions,  are also unital.

\begin{lemma} \label{unital}
 If $M$ is an $L_{\bf k}(\Gamma)$-module then the following are equivalent:\\
 (i) $M$ is unital.\\
 (ii) If $0\neq m \in M$ then $0< \vert V_m \vert < \infty $ where $V_m:= \{ v \in V \> \vert \> mv\neq 0\}$. \\
 (iii) $M=\Sigma_{v \in V} Mv= \oplus_{v \in V} Mv$.
 \end{lemma}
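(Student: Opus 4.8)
The plan is to establish the cycle of implications $(i)\Rightarrow(ii)\Rightarrow(iii)\Rightarrow(i)$, using throughout the fact that the vertices form a set of pairwise orthogonal idempotents: whenever $y=yv$ and $z=zw$ with $v\neq w$ in $V$, orthogonality gives $yw=yvw=0$. In particular the internal sum $\sum_{v\in V}Mv$ is automatically direct for \emph{every} $L_{\bf k}(\Gamma)$-module $M$, since from $\sum_{v\in F}x_v=0$ with $x_v\in Mv$, right multiplication by any $w\in F$ annihilates every term except $x_w$, forcing $x_w=0$. Hence condition $(iii)$ is really just the single equality $M=\sum_{v\in V}Mv$, and the substantive work is to exhibit, for each $m$, a finite vertex set that supports it.

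For $(i)\Rightarrow(ii)$ I would start from $M=ML_{\bf k}(\Gamma)$ and write an arbitrary $m\in M$ as a finite sum $m=\sum_i m_ia_i$ with $a_i\in L_{\bf k}(\Gamma)$. By Fact \ref{fact}(ii) each $a_i$ is a ${\bf k}$-linear combination of terms $pq^*$ with $tp=tq$, and applying the relation $s(f)f=f=ft(f)$ to the last letter of $q^*$ gives $q^*=q^*\,sq$, hence $pq^*=pq^*\,sq\in L_{\bf k}(\Gamma)\,sq$. Thus each $a_i$ lies in $\sum_{w\in F_i}L_{\bf k}(\Gamma)w$ for a finite $F_i\subseteq V$, so $m\in\sum_{w\in F}Mw$ with $F:=\bigcup_iF_i$ finite. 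Writing $m=\sum_{w\in F}x_w$ with $x_w\in Mw$, orthogonality gives $mw=x_w$ for $w\in F$ and $mv=0$ for $v\notin F$, so $V_m\subseteq F$ is finite; it is nonempty because $m=\sum_{w\in F}mw\neq0$.

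For $(ii)\Rightarrow(iii)$, given $m\in M$ (the case $m=0$ being trivial) set $m':=\sum_{v\in V_m}mv$, a finite sum lying in $\sum_{v\in V}Mv$. I claim $m=m'$: otherwise $m-m'\neq0$, so by $(ii)$ some vertex $w$ satisfies $(m-m')w\neq0$; but $(m-m')w=mw-\sum_{v\in V_m}m(vw)$ equals $mw-mw=0$ when $w\in V_m$, and equals $0-0=0$ when $w\notin V_m$ (then $mw=0$ and every $vw=0$), a contradiction. So $m=m'\in\sum_vMv$, whence $M=\sum_vMv=\bigoplus_vMv$ by the first paragraph. Finally $(iii)\Rightarrow(i)$ is immediate, since $v\in L_{\bf k}(\Gamma)$ gives $Mv\subseteq ML_{\bf k}(\Gamma)$ for each $v$, so $M=\sum_vMv\subseteq ML_{\bf k}(\Gamma)\subseteq M$.

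The only step needing genuine care is $(i)\Rightarrow(ii)$: the point is that right multiplication by an arbitrary algebra element can only spread a vector over \emph{finitely many} of the $Mw$, and this is precisely where the normal-form spanning set of Fact \ref{fact}(ii) together with the identity $pq^*=pq^*\,sq$ is used. Everything else is routine manipulation with orthogonal vertex idempotents.
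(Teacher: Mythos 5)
Your proof is correct and follows essentially the same route as the paper's: the key steps (writing $pq^*=pq^*\,sq$ to see that a unital $m$ is supported on finitely many $Mv$, using orthogonality of the vertex idempotents for directness, and using (ii) to conclude $m-\sum_{v}mv=0$) are exactly those in the paper, merely reorganized as a cycle of implications instead of two bi-implications.
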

 
 \begin{proof}
 (i) $\Leftrightarrow$ (iii): If $M$ is unital then $m=\Sigma m_ia_i$ for all $m \in M$ where $a_i=\Sigma \lambda_{ij}p_{ij}q_{ij}^*$ for finitely many $\lambda_{ij} \in {\bf k}$ and $p_{ij},\> q_{ij}$ in $Path \Gamma$. Rearranging $\Sigma m_i a_i$ by grouping terms with the same $sq_{ij}$ we see that $m \in \Sigma_{v \in V} Mv$. Since $V$ is a set of orthogonal idempotents, $\Sigma Mv=\oplus Mv$.\\
 
 Conversely, if $M=\Sigma_{v \in V} Mv $ then for all $m \in M$ we have $m = \Sigma_{i=1}^n m_i$ where $m_i \in Mv_i$ for some $v_1, \> v_2, \> \cdots , \> v_n$ by (iii). Since $m_i=m_iv_i$ and $v_i \in L_{\bf k}(\Gamma)$ we get $m = \Sigma m_iv_i$, so $M= ML_{\bf k}(\Gamma)$, that is $M$ is unital.\\

 (ii) $\Leftrightarrow $ (iii): Assuming (ii), if $m \in M$ then $\Sigma_{v \in V} mv$ is actually a finite sum. Also $(m-\Sigma mv)u=mu-mu= 0$ by the relations (V) for all $u \in V$. Now  $m-\Sigma mv=0$ by (ii), hence $m \in \Sigma Mv$. \\
 
 Conversely, if  $M=\Sigma Mv$ then $m=\Sigma_{i=1}^n m_i v_i$ as above, hence $V_m \subseteq \{v_1, \> v_2, \>  \cdots , v_n\}$, thus finite. Also, if $mv=0$ for all $v \in V$ then $m v_i = m_i v_i =0 $ for $i=1,2, \cdots,n$ and hence $m=0$. So, if $0\neq m $ then $V_m \neq \emptyset$. 
  \end{proof}
  
  \begin{proposition} \label{unital2}
  The full subcategory of unital $L_{\bf k}(\Gamma)$-modules is a Serre subcategory of $Mod_{L_{\bf k}(\Gamma)}$ which is closed under colimits. 
  \end{proposition}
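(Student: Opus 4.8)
The plan is to verify directly that the full subcategory $\mathcal{U}$ of unital $L_{\bf k}(\Gamma)$-modules is closed under subobjects, quotients and extensions inside the abelian category of all (not necessarily unital) $L_{\bf k}(\Gamma)$-modules, and then that it is closed under arbitrary direct sums; together with closure under quotients this gives closure under every colimit, since each colimit in a module category is a quotient of a direct sum of the objects of the diagram. The tool used throughout is Lemma \ref{unital}, which says that for an arbitrary $L_{\bf k}(\Gamma)$-module $M$ the conditions ``$M=ML_{\bf k}(\Gamma)$'', ``$M=\sum_{v\in V}Mv$'', and ``every nonzero $m\in M$ has $0<|V_m|<\infty$'' are equivalent.

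Closure under subobjects and quotients needs essentially no computation. If $N$ is a submodule of a unital $M$, then for every $0\neq n\in N$ we have $0<|V_n|<\infty$ because $n$ is a nonzero element of the unital module $M$, and since $V_n$ depends only on $n$, Lemma \ref{unital}(ii) shows $N$ is unital. If $\pi\colon M\twoheadrightarrow Q$ is onto with $M$ unital, then, using that $\pi$ is $L_{\bf k}(\Gamma)$-linear, $Q=\pi(M)=\pi(ML_{\bf k}(\Gamma))=\pi(M)L_{\bf k}(\Gamma)=QL_{\bf k}(\Gamma)$, so $Q$ is unital. In particular kernels, images and cokernels of morphisms of unital modules are unital, so the inclusion functor of $\mathcal{U}$ is exact.

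The step that requires an actual (though routine) argument is closure under extensions. Given a short exact sequence $0\to A\xrightarrow{\,i\,}B\xrightarrow{\,\pi\,}C\to 0$ with $A$ and $C$ unital, I would take an arbitrary $b\in B$ and show $b\in BL_{\bf k}(\Gamma)$. Since $C=CL_{\bf k}(\Gamma)$, write $\pi(b)=\sum_j c_j a_j$ with $c_j\in C$ and $a_j\in L_{\bf k}(\Gamma)$, lift each $c_j$ to some $b_j\in B$, and set $b':=b-\sum_j b_j a_j$; then $\pi(b')=0$, so $b'=i(a)$ for some $a\in A$. As $A$ is unital, $a\in AL_{\bf k}(\Gamma)$, hence $i(a)\in i(AL_{\bf k}(\Gamma))=i(A)L_{\bf k}(\Gamma)\subseteq BL_{\bf k}(\Gamma)$; since also $\sum_j b_j a_j\in BL_{\bf k}(\Gamma)$, we get $b\in BL_{\bf k}(\Gamma)$. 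Thus $B=BL_{\bf k}(\Gamma)$ is unital. (One can equivalently run a support-bookkeeping version: replace $b$ by $b-\sum_{v\in V_{\pi(b)}}bv$, which lands in $i(A)$ because $C$ is unital, and then invoke $A=\sum_v Av$.)

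Finally, for a family $(M_i)_{i\in I}$ of unital modules one checks $\big(\bigoplus_i M_i\big)L_{\bf k}(\Gamma)=\bigoplus_i\big(M_iL_{\bf k}(\Gamma)\big)=\bigoplus_i M_i$, so $\bigoplus_i M_i$ is unital; combined with closure under quotients this yields closure under all colimits, in particular under filtered colimits. I do not expect a genuine obstacle here: the only slightly non-formal point is the extension step, and there the single idea — reduce modulo $i(A)$ using unitality of $C$, then use unitality of $A$ — settles it.
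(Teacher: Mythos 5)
Your proof is correct and follows essentially the same route as the paper: verify the Serre axioms using Lemma \ref{unital}, then get all colimits from direct sums plus quotients. The only (immaterial) difference is in the extension step, where you lift generators of $\pi(b)$ and use the $M=ML_{\bf k}(\Gamma)$ characterization, whereas the paper runs the support-bookkeeping argument via Lemma \ref{unital}(ii) — the alternative you yourself note parenthetically; both are valid.
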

  \begin{proof}
  Clearly, the 0 module is unital. If $M$ is unital, i.e., $M=MR$ then any quotient $N$ of $M$ also satisfies $N=NR$. 
  Using Lemma \ref{unital}(ii) we see that unital modules are closed under taking submodules. Being unital is clearly invariant under isomorphisms. If $A \hookrightarrow M \longrightarrow M/A$ is a short exact sequence of $L(\Gamma)$-modules with $A$ and $M/A$ unital then $V_m=V_{m+A} \cup V_{a}$ with $a=m-\sum mv$ where the sum is over $v \in V_{m+A}$ for all $m \in M$. Since $m-\sum mv $ is in $A$ and for $m\neq 0$ at least one of $V_{m+A}$ or $V_a$ is nonempty, $M$ is unital by Lemma \ref{unital}(ii). This proves that the subcategory of unital $L(\Gamma)$-modules is a Serre subcategory. \\
  
  If $M=\oplus M_i$ with $M_i$ unital for all $i$ then $M_i= \oplus_{v \in V} M_iv$ for each $i$ by Lemma \ref{unital}(iii). Also $Mv=\{ m \in M \> \vert \>  mv =m \}=\oplus M_iv$. Changing the order of summation we see that $M=\oplus_{v \in V} Mv$. Hence $M$ is unital by Lemma \ref{unital}(iii), so an arbitrary direct sum of unital $L(\Gamma)$-modules is also unital. Since any colimit is a quotient of a direct sum, we are done. 
  \end{proof} \\

 Since $L_{\bf k}(\Gamma)$ regarded as a free $L_{\bf k}(\Gamma)$-module is unital, projective modules $vL_{\bf k}(\Gamma)$ with $v \in V$ and their direct sums are all unital by Proposition \ref{unital2}, as well as their quotients. However, the subcategory of unital modules is not closed under arbitrary products. For instance, if $\Gamma$ has infinitely many vertices $v_0, v_1, v_2, \cdots $ then $m \in M=L_{\bf k}(\Gamma)^{\mathbb{N}}$ with $m(i)=v_i$ for $i \in \mathbb{N}$ violates Lemma \ref{unital}(ii), hence $M$ is not unital. \\

   \medskip
\subsection{The Reduction Algorithm }
\medskip

 This section is about the consequences of a geometric (graph theoretic)  process we call the \textbf{reduction algorithm} (\cite{ko1}, \cite{ko2}) defined on a row-finite digraph $\Gamma=(V,E)$: For a loopless nonsink $v \in V$, we replace each path $fg$ of length 2 such that $tf=v=sg$ with an arrow labeled $fg$ from $sf$ to $tg$ and delete $v$ and all arrows touching $v$. (Note that $fg$ denotes a path in $\Gamma$, but an arrow in its reduction.) In particular, if $v$ is a source but not a sink, then we delete $v$ and all arrows starting at $v$ without adding any new arrows. We may repeat this  as long as there is a loopless non-sink. Any digraph obtained during this process is called a \textbf{reduction} of $\Gamma$. 
If $\Gamma$ is finite, after finitely many steps we will reach a \textbf{complete reduction} of $\Gamma$,  which has no loopless nonsinks. A digraph in which every vertex is either a sink or has a loop, is called \textbf{completely reduced}.\\

In the example below, $\Gamma_1$, $\Gamma_2$ and $\Gamma_3$ are reductions of the digraph $\Gamma$. The number of arrows from one vertex to another is indicated by the number above the arrow (so, in $\Gamma$ there are 3 arrows from $v$ to $w$).  $\Gamma_3$ is a complete  reduction of $\Gamma$.

\begin{example}

$$ \xymatrix{&   {\bullet}^{v} \ar@/^1pc/[r]^{3} & {\bullet}^{\text{\textcolor{red}{w}}} \ar@/^1pc/[l]_{2} \\ {\bullet}^{u} \ar@{->}[ur] \ar@{->}[dr]^{5} & \\
                & \bullet^{x} \ar@{->}[r]  & \bullet^{y}  }
 \xymatrix{&{\bullet}^{v}\ar@(ul,ur)^{6} & \\ {\bullet}^{\text{\textcolor{red}{u}}} \ar@{->}[ur] \ar@{->}[dr]^{5}& \\
               & \bullet^{x} \ar@{->}[r]  & \bullet^{y}  }
 \xymatrix{&{\bullet}^{v}\ar@(ul,ur)^{6} & \\ 
                & \bullet^{\textcolor{red}{x} } \ar@{->}[r]  & \bullet^{y} }
 \xymatrix{& {\bullet}^{v} \ar@(ul,ur)^{6}&\\ 
            &  \bullet^{y}  }
$$
 $$\quad \Gamma \qquad\qquad  \qquad  \quad  \qquad  \Gamma_1\qquad  \qquad \qquad\qquad  \Gamma_2\qquad  \qquad \qquad \Gamma_3 $$\\
\end{example}

The complete reduction $\Gamma_3$ of the digraph $\Gamma$ above does not depend on the choice of reductions. However, complete reductions are not unique (up to digraph isomorphism) in general. For instance, the digraph $\Lambda$ below has two non-isomorphic complete reductions:\\
$$\Lambda : \  \quad  \xymatrix{& {\bullet}^{u} \ar@/^1pc/[r] &\ar [l]  {\bullet}^{v}  \ar [r]  & {\bullet}^{w} \ar@/^1pc/[l] }$$\\

\noindent
The complete reductions of $\Lambda$:\\
$$   \xymatrix{ & {\bullet}^{v} \ar@(ul,dl) \ar@(ur,dr)  } \qquad \quad 
 \xymatrix{& {\bullet}^{u}\ar@(ul,dl) \ar@/^0.6pc/[r] & {\bullet}^{w} \ar@(ur,dr)    \ar@/^0.5pc/[l] }\quad $$ \\

A digraph and all reductions of it have the same set of sinks. Cycles may get shorter under each reduction and but they do not disappear. If $\Gamma$ is finite and the cycles of $\Gamma$ are pairwise disjoint then $\Gamma$ has a unique complete reduction up to isomorphism. All cycles in $\Gamma$ become loops in the complete reduction. The vertices of the complete reduction  correspond to the sinks and the cycles of $\Gamma$. 

\begin{theorem} \cite[Theorem 4.1]{ko2} \label{Reduced}
Let ${\bf k}$ be a commutative ring with 1. If  $\Gamma'$ is a  reduction of $\Gamma$, then $L_{\bf k}(\Gamma)$ and $L_{\bf k}(\Gamma')$ are Morita equivalent, that is, their unital module categories are equivalent. This equivalence preserves the subcategories of finite-dimensional modules.
\end{theorem}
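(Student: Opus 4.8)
The plan is to first reduce to a single elementary step of the reduction algorithm and then transport everything through the dictionary between unital modules and quiver representations of Proposition \ref{teorem}. Since equivalence of unital module categories is transitive and any reduction $\Gamma'$ of $\Gamma$ is obtained by a finite sequence of elementary reductions, it suffices to treat the case where $\Gamma'$ arises from $\Gamma$ by choosing a loopless nonsink $v$, deleting $v$ and every arrow touching it, and inserting, for each length-two path $fg$ with $tf=v=sg$, one arrow (still written $fg$) from $sf$ to $tg$. By Proposition \ref{teorem} it is then enough to produce an equivalence between the category of quiver representations of $\Gamma$ satisfying $(I)$ and the category of quiver representations of $\Gamma'$ satisfying $(I)$; no hypothesis on ${\bf k}$ will intervene, so the argument runs exactly as the field case in \cite[Theorem 4.1]{ko2}.

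Next I would write down the two functors. Given $\rho$ on $\Gamma$ satisfying $(I)$, let $F(\rho)$ be the representation of $\Gamma'$ with $F(\rho)(u):=\rho(u)$ for $u\neq v$, with $F(\rho)(e):=\rho(e)$ for every arrow $e$ of $\Gamma$ not touching $v$, and with $F(\rho)(fg):=\rho(g)\circ\rho(f)$ on each new arrow. Conversely, given $\sigma$ on $\Gamma'$ satisfying $(I)$, let $G(\sigma)$ be the representation of $\Gamma$ with $G(\sigma)(u):=\sigma(u)$ for $u\neq v$, with $G(\sigma)(v):=\bigoplus_{sg=v}\sigma(tg)$ (a well-defined, nonzero ${\bf k}$-module, as $v$ is a loopless nonsink), with $G(\sigma)(g):=$ the projection onto the $tg$-summand for $sg=v$, with $G(\sigma)(f):=\bigoplus_{sg=v}\sigma(fg)$ for $tf=v$, and with $G(\sigma)(e):=\sigma(e)$ on the remaining arrows. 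Both are manifestly functorial, acting as the identity on the components indexed by vertices $\neq v$.

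The substance of the proof is to check that $F$ and $G$ respect condition $(I)$ and are mutually quasi-inverse. For $G(\sigma)$, condition $(I)$ at $v$ holds by construction, and at any other nonsink $u$ one unwinds the definitions and recognizes the required map as the condition-$(I)$ isomorphism of $\sigma$ at $u$ (using that a nonsink of $\Gamma$ distinct from $v$ is still a nonsink of $\Gamma'$, because $v$, being a nonsink, has an outgoing arrow). For $F(\rho)$, the only nontrivial instance of $(I)$ is at a predecessor $u$ of $v$: there one substitutes the isomorphism $\rho(v)\cong\bigoplus_{sg=v}\rho(tg)$ coming from $(I)$ at $v$ into the isomorphism $\rho(u)\cong\bigoplus_{se=u}\rho(te)$ coming from $(I)$ at $u$, and checks that the composite is exactly $\bigoplus F(\rho)(e')$ over the arrows $e'$ leaving $u$ in $\Gamma'$. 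One then verifies $F\circ G=\mathrm{id}$ on the nose (the projection $G(\sigma)(g)$ post-composed with $G(\sigma)(f)=\bigoplus_h\sigma(fh)$ recovers $\sigma(fg)$) and that $G\circ F$ is naturally isomorphic to the identity via the transformation that is the identity at each $u\neq v$ and the inverse of the condition-$(I)$ map $\bigoplus_{sg=v}\rho(tg)\longrightarrow\rho(v)$ at $v$; this transformation intertwines the structure maps precisely because of the way $G(\sigma)(f)$ and $G(\sigma)(g)$ were defined. Pulling back along Proposition \ref{teorem} then gives $Mod_{L_{\bf k}(\Gamma)}\simeq Mod_{L_{\bf k}(\Gamma')}$. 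I expect the main obstacle to be purely organizational: keeping the substitution of the $(I)$-isomorphism at $v$ into the $(I)$-isomorphisms at the predecessors of $v$ bookkept correctly, and disposing of the degenerate cases ($v$ a source, empty sums, a predecessor of $v$ all of whose out-arrows end at $v$).

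Finally, for the statement about finite-dimensional modules, I would observe that the equivalence visibly preserves "size": $G(\sigma)$ agrees with $\sigma$ at every vertex other than $v$, and $G(\sigma)(v)=\bigoplus_{sg=v}\sigma(tg)$ is a finite direct sum of ${\bf k}$-modules occurring at other vertices, so a module has all its weight spaces finitely generated over ${\bf k}$ and finite support if and only if its image under the equivalence does; hence the equivalence restricts, in both directions, to the subcategories of finite-dimensional modules. As a remark I would note the alternative, equivalent packaging: when $V$ is finite, $\epsilon:=\sum_{u\neq v}u$ is a full idempotent of $L_{\bf k}(\Gamma)$ — fullness follows from (CK2) at $v$, since $ee^*=e\epsilon e^*$ for each $e$ with $se=v$, whence $v=\sum_{se=v}ee^*\in L_{\bf k}(\Gamma)\,\epsilon\,L_{\bf k}(\Gamma)$ — with $\epsilon\,L_{\bf k}(\Gamma)\,\epsilon\cong L_{\bf k}(\Gamma')$, which yields the Morita equivalence directly and extends to general row-finite $\Gamma$ using local units.
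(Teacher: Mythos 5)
Your proposal is correct and follows essentially the same route the paper (and the cited \cite[Theorem 4.1]{ko2}) takes: pass through the quiver-representation dictionary of Proposition \ref{teorem}, restrict the representation to the surviving vertices with compositions on the new arrows, and reconstruct the deleted vertex as $\bigoplus_{sg=v}\sigma(tg)$, with condition $(I)$ and finite-dimensionality checked exactly as you describe. Your closing remark about the full idempotent $\epsilon=\sum_{u\neq v}u$ is the content of the paper's Proposition \ref{corner1}, so that packaging is also consistent with the text.
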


   The proof of Theorem \ref{Reduced} when ${\bf k}$ is a field is given in 
 \cite[Theorem 4.1]{ko2}. Similar to Propositon \ref{teorem} (which is the main tool) this proof also works over a commutative ring with 1. \\

From the quiver representation viewpoint, reduction corresponds to restricting the representation to the remaining vertices. The new arrow $ef$ is assigned the composition of the ${\bf k}$-module homomorphisms assigned to $e$ and $f$.  We recover the original representation $\rho$ by assigning $\oplus_{se=v} \rho(te)$ to the deleted vertex $v$. The details are given in \cite[Theorem 4.1]{ko2}.\\

   When $\Gamma$ is finite, a reduction $\Lambda$ of $\Gamma$ is isomorphic to the corner algebra $(\sum u)L_{\bf k}(\Gamma) (\sum u) $ where the sum is over the vertices of $\Lambda$, i.e., all the vertices that were not eliminated during the reduction process. Actually $(\sum u)L_{\bf k}(\Gamma) (\sum u) $ makes sense even if $\Gamma$ is infinite because the product of all but finitely many vertices with an element of $L_{\bf k}(\Gamma)$ is 0. 
   
   \begin{proposition} \label{corner1}
   If $\Lambda$ is a reduction of $\Gamma$ then 
   $L_{\bf k}(\Lambda)\cong (\sum u)L_{\bf k}(\Gamma) (\sum u)$ where the sum is over the vertices of $\Lambda$ as above.
   \end{proposition}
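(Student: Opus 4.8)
The plan is to build explicit mutually inverse $*$-homomorphisms between $L_{\bf k}(\Lambda)$ and the corner $e L_{\bf k}(\Gamma) e$, where $e := \sum_{u} u$ with the sum over the vertices of $\Lambda$. It suffices to treat the case where $\Lambda$ is obtained from $\Gamma$ by a single reduction step at a loopless non-sink $v$, since a general reduction is a finite composition of such steps and the composite of two corner embeddings of this form is again one of the same form (if $e' \le e$ are sums of vertices then $e'\big(eL_{\bf k}(\Gamma)e\big)e' = e'L_{\bf k}(\Gamma)e'$). So fix such a $v$, let $W := V\setminus\{v\}$ be the vertex set of $\Lambda$ and $e = \sum_{u\in W} u$. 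Recall the arrows of $\Lambda$: every arrow of $\Gamma$ not touching $v$ is kept, and for each pair $(f,g)$ with $tf = v = sg$ there is a new arrow, which I will call $[fg]$, from $sf$ to $tg$.

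First I would define $\Phi : L_{\bf k}(\Lambda) \to eL_{\bf k}(\Gamma)e$ on generators by $u \mapsto u$ for $u \in W$, $h \mapsto h$ for an old arrow $h$ of $\Lambda$ (so $h^* \mapsto h^*$), and $[fg] \mapsto fg \in L_{\bf k}(\Gamma)$ (hence $[fg]^* \mapsto g^*f^*$). One then checks that the defining relations of $L_{\bf k}(\Lambda)$ are sent to valid identities in $eL_{\bf k}(\Gamma)e$. Relations (V) and (E) are immediate. For (CK1): $h^*h' \mapsto h^*h' = \delta_{h,h'} th$ when $h,h'$ are old arrows; $[fg]^*[f'g'] \mapsto g^*f^*f'g' = \delta_{f,f'} g^* (tf) g' = \delta_{f,f'} g^*g' = \delta_{f,f'}\delta_{g,g'} tg$, using (CK1) in $\Gamma$ and that $f^*f' = \delta_{f,f'} tf = \delta_{f,f'} v$; and a mixed product $h^*[fg]$ vanishes because the targets $th$ and $sf$ of the two arrows in $\Lambda$ disagree, matching $h^*(fg) = (h^*f)g = 0$ since $th \ne v = sf$. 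For (CK2) at a vertex $u \in W$ that is a non-sink in $\Lambda$: the arrows of $\Lambda$ out of $u$ are the old arrows $h$ with $sh = u$, $th \ne v$, together with the arrows $[fg]$ with $sf = u$, $tf = v$; summing their images gives $\sum_{sh=u,\, th\ne v} hh^* + \sum_{sf=u,\, tf=v} f\big(\sum_{sg=v} gg^*\big) f^* = \sum_{sh=u,\,th\ne v} hh^* + \sum_{sf=u,\,tf=v} f(v)f^* = \sum_{sh = u} hh^* = u$, where the inner sum used (CK2) at $v$ in $\Gamma$. Thus $\Phi$ is a well-defined $*$-homomorphism, and its image lies in $eL_{\bf k}(\Gamma)e$ and in fact generates it, because $v = \sum_{sg=v} gg^* = \sum_{tf=v,\,sg=v} f^*fgg^*$ lets one rewrite any $pq^*$ in $L_{\bf k}(\Gamma)$ with $p,q$ avoiding $v$ only as an internal vertex — more precisely, any path in $\Gamma$ between vertices of $W$ factors through $v$ only by consecutive pairs $fg$, so $e L_{\bf k}(\Gamma) e$ is spanned by products of the generators' images; hence $\Phi$ is onto.

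For injectivity I would use Proposition \ref{bire-bir}: $\Phi$ is $\mathbb{Z}$-graded (each old arrow has degree $1$, and $[fg] \mapsto fg$ has degree $1+1$, but one should instead use the weaker statement of \ref{bire-bir}, namely that a homomorphism out of $L_{\bf k}(\Lambda)$ is injective iff its restriction to ${\bf k}\Lambda$ is injective, together with the universal-grading argument). Concretely, by the graded uniqueness part of Proposition \ref{bire-bir} applied with the universal ($F_{E(\Lambda)}$-)grading, it is enough to show $\Phi$ is injective on each ${\bf k}u$, $u \in W$; since $\Phi(u) = u \ne 0$ in $L_{\bf k}(\Gamma)$ and ${\bf k}u \cong {\bf k}$ by Proposition \ref{pp^*}, and $\lambda u = 0$ in $L_{\bf k}(\Gamma)$ forces $\lambda = 0$ again by \ref{pp^*}(ii), we get injectivity. (Alternatively, one builds the inverse $\Psi : e L_{\bf k}(\Gamma) e \to L_{\bf k}(\Lambda)$ directly: $\Psi$ is the restriction to the corner of the composite $L_{\bf k}(\Gamma) \to L_{\bf k}(\Gamma)/(\overline{\{v\}}) \cong \dots$ — but $\{v\}$ need not be hereditary, so Proposition \ref{hereditary} does not apply verbatim; this is why the corner-algebra formulation, not a quotient, is the right one.) The main obstacle is precisely verifying surjectivity cleanly — i.e., that products of the images of the $\Lambda$-generators already exhaust $eL_{\bf k}(\Gamma)e$ — which amounts to the combinatorial observation that in a path of $\Gamma$ with endpoints in $W$, every visit to $v$ occurs as a length-two factor $fg$ with $tf = v = sg$; once that is in hand, Fact \ref{fact}(ii) finishes the spanning argument, and the injectivity follows formally from the graded structure as above. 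Since all maps constructed respect $*$, the isomorphism is a $*$-isomorphism, and chaining single steps yields the general statement.
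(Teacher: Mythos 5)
Your proposal is correct and follows essentially the same route as the paper: reduce to a one-step reduction, define the $*$-homomorphism sending old generators to themselves and $[fg]$ to the path $fg$, check the Cuntz--Krieger relations (using (CK2) at $v$ and at $u$), show surjectivity onto the corner by observing that every internal visit to $v$ in a path with endpoints in $W$ occurs as a consecutive pair $fg$, and deduce injectivity from Proposition \ref{bire-bir} via injectivity of the restriction to ${\bf k}\Lambda$. Your verification of the relations and of surjectivity is in fact more detailed than the paper's; the only caution is that the reduction to injectivity on each ${\bf k}u$ via the universal grading requires the induced map $F_{E(\Lambda)}\to F_{E(\Gamma)}$ to be injective (so that $\ker\Phi$ is graded), whereas checking injectivity on ${\bf k}\Lambda$ directly, as you also indicate and as the paper does, avoids this.
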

   
   \begin{proof}
   It suffices to prove this for a 1-step reduction eliminating the vertex $v$. We define a $*$-algebra homomorphism $\varphi$ from $L_{\bf k}(\Lambda)$ to $L_{\bf k}(\Gamma)$ by sending each vertex and each original arrow to itself and a new arrow $ef$ of $\Lambda$ to the path $ef$ in $\Gamma$. It's routine to check that the relations are satisfied, checking (CK2)  uses (CK2) of $\Gamma$ twice if new arrows are involved.\\
   
   If $pq^*\in L_{\bf k}(\Gamma)$ with $sp \neq v\neq sq$, but $tp=v=tq$ then $pq^*=pvq^*=p(\sum_{sf=v} ff^*)q^*$ since $v$ is not a sink. Note that $p$ and $q$ are paths of positive length because $sp\neq v =tp$ and $sq\neq v =tq$, hence $p=p'e$ and $q=q'g$ with $ef$ and $gf$ being images of new arrows in $\Lambda$ for all $f$ with $sf=v$. Similarly, if $p'$ or $q'$ pass through $v$ then we have a path of length 2 corresponding to a new arrow in $\Lambda$, using $tp'=se\neq v \neq sg=tq'$ since $v$ is loopless. Thus image of $\varphi$ is $(\sum u)L_{\bf k}(\Gamma) (\sum u)$.\\

   The restriction of $\varphi$ to ${\bf k}\Lambda$ is one-to-one, hence $\varphi$ is an isomorphism onto $(\sum u)L_{\bf k}(\Gamma) (\sum u)$ by Propositon \ref{bire-bir}. 
  \end{proof}
       
  \begin{corollary}
 Let $\Gamma$ be a finite digraph with pairwise disjoint cycles and let $\Lambda$ be the complete reduction of $\Gamma$. If $U$ is a set containing all the sinks in $\Gamma$ and exactly one vertex from each cycle in $\Gamma$ then $$L_{\bf k}(\Lambda) \> \cong \>  \big(\sum_{u \in U} u\big)L_{\bf k}(\Gamma) \big(\sum_{u \in U} u\big) .$$
  \end{corollary}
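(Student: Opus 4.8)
The plan is to obtain this corollary as a direct consequence of Proposition \ref{corner1} once we know that the complete reduction of $\Gamma$ (which, for a finite digraph with pairwise disjoint cycles, is unique up to isomorphism by the discussion preceding Theorem \ref{Reduced}) can be reached by a sequence of one-step reductions whose surviving vertex set is exactly $U$. So the first step is purely combinatorial: starting from $\Gamma$, repeatedly eliminate loopless non-sinks. A vertex survives the entire process if and only if it is a sink or it lies on a cycle that never gets fully contracted; since the cycles are pairwise disjoint, each cycle $C$ collapses to a single loop at one of its vertices, and we are free to choose the reduction order so that the surviving vertex of $C$ is precisely the chosen representative in $U$. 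Hence there is a sequence of reductions $\Gamma = \Gamma_0 \to \Gamma_1 \to \cdots \to \Gamma_n = \Lambda$ in which the vertex set of $\Lambda$ is exactly $U$.

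The second step is to apply Proposition \ref{corner1}. Strictly, that proposition is stated for a reduction $\Lambda$ of $\Gamma$, and it already delivers $L_{\bf k}(\Lambda) \cong (\sum_{u} u) L_{\bf k}(\Gamma)(\sum_{u} u)$ where the sum ranges over the vertices of $\Lambda$. Once the combinatorial step identifies that vertex set with $U$, the corollary is immediate: we simply substitute and read off
$$L_{\bf k}(\Lambda) \;\cong\; \big(\textstyle\sum_{u \in U} u\big) L_{\bf k}(\Gamma) \big(\textstyle\sum_{u \in U} u\big).$$
If one prefers a self-contained argument not invoking the multi-step version directly, one can instead iterate Proposition \ref{corner1}: after eliminating a vertex $v$ we have $L_{\bf k}(\Gamma_1) \cong e_1 L_{\bf k}(\Gamma) e_1$ with $e_1 = \sum_{u \in V \setminus \{v\}} u$, and corners of corners are corners — $e_2(e_1 L e_1)e_2 = e_2 L e_2$ whenever $e_2 \le e_1$ are sums of vertices — so an induction on the number of reduction steps collapses the telescoping composition down to the single idempotent $\sum_{u \in U} u$.

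The main obstacle, such as it is, is the bookkeeping in the combinatorial step: one must check that choosing to reduce away all the non-representative vertices of each cycle (and all loopless non-sinks outside the cycles) really can be carried out, i.e. that at each stage there is still a loopless non-sink to eliminate until only $U$ remains, and that this does not prematurely create or destroy structure at the chosen representatives. This is where the pairwise-disjointness of the cycles is essential — it guarantees each cycle contracts independently to a loop — and it is exactly the content already summarized in the paragraph preceding Theorem \ref{Reduced} (``All cycles in $\Gamma$ become loops in the complete reduction; the vertices of the complete reduction correspond to the sinks and the cycles of $\Gamma$''). So no genuinely new work is needed beyond making that correspondence explicit and matching it with the chosen set $U$; the algebraic half is a one-line appeal to Proposition \ref{corner1}.
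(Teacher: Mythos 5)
Your proposal is correct and follows essentially the same route as the paper: identify the surviving vertices of the complete reduction with the set $U$ (sinks plus one representative per cycle) and then apply Proposition \ref{corner1}. The extra remarks about iterating one-step reductions and corners-of-corners are fine but not needed, since Proposition \ref{corner1} is already stated for an arbitrary (multi-step) reduction.
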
     
       
   \begin{proof}
   A complete reduction of a finite digraph whose cycles are pairwise disjoint is gotten by eliminating all the vertices other than the sinks and one vertex from each cycle. Applying Proposition \ref{corner1} yields the result.
   \end{proof}

 \subsection{Normalization/Pincushion}
 A variation/generalization of the reduction algorithm is {\bf normalization} which gives a $*$-algebra isomorphism (not just a Morita equivalence)  between $L(\Gamma)$ and  $L(\Gamma')$ where $\Gamma'$ is a normalization of the digraph $\Gamma$. \\

      Let $\Gamma =(V,E)$ be an arbitrary digraph and $v$ be a loopless regular vertex in $\Gamma$. The 1-step normalization $\Gamma_v = (V_v, E_v)$ of $\Gamma$ is the digraph where  $$ V_v :=(V \setminus \{ v\})\cup \{v_e \> \vert \> se=v\} \qquad \quad $$  $$E_v:=(E \setminus  t^{-1}
      (v)) \cup \{ ef \>\vert te=v=sf \} $$ 
      with $s(ef):=se$, $t(ef):=tf$, $\> se:=v_e$ for $e \in s^{-1}(v)$. The source function is the same as that of $\Gamma$ on $E \setminus (s^{-1}(v) \cup t^{-1}(v))$; the target function is the same as that of $\Gamma$ on $E \setminus t^{-1}(v)$.\\

Deleting the vertices $v_e$ and arrows $e $ with $se=v_e$ from $\Gamma_v$ we obtain $\Gamma_v'$, a 1-step reduction of $\Gamma$. We refer to the arrows $e$ with $se=v_e$ in $\Gamma_v$ as \textit{pins}. We obtain the (\textit{pincushion}) $\Gamma_v$ from the 1-step reduction $\Gamma_v'$ by sticking the pins $e \in s^{-1}(v)$ to $te$ in $\Gamma_v'$.\\

\begin{theorem}
    $L(\Gamma)$ is $*$-isomorphic to $L(\Gamma_v)$.
\end{theorem}
\begin{proof}
    Let $\varphi :V_v \sqcup E_v \longrightarrow L(\Gamma)$ be given by   
$$\varphi (u):=u  \> \text{ for }\>  u \in V\setminus \{ v\} \> \text{ and } \>  \varphi (v_e):=ee^*$$  $$\varphi(f):=f \> \text{ for } \> f \in E \setminus t^{-1}
      (v)\> \text{ and } \> \> \varphi(ef) : = ef \in L\Gamma.$$ 
      \noindent
      Note that $ef$ denotes a path of length 2 in $\Gamma$, but an arrow in $\Gamma_v$, which one it is should be clear from the context.
It's routine to check that the defining relations of $L(\Gamma_v)$ are satisfied, so $\varphi$ extends to a $*$-algebra homomorphism. \\

Let $\psi :V \sqcup E \longrightarrow L(\Gamma_v)$ be given by   
$$\psi (u):=u  \> \text{ for }\>  u \in V\setminus \{ v\} \> \text{ and } \>  \psi (v):= \sum_{se=v} v_e $$  $$\psi(f):=f \> \text{ for } \> f  \notin t^{-1}
      (v)\>, \> \> \psi(e) : = \sum_{sf=v} (ef)f^*  \text{ for } e \in t^{-1}(v).$$ 
It is routine to check that the relations of $L(\Gamma) $ are satisfied. Since $\varphi \circ \psi =id_{L(\Gamma)}$ and $\psi \circ \varphi =id_{L(\Gamma_v)}$, $L(\Gamma)$ and $L(\Gamma_v)$ are $*$-isomorphism.
\end{proof}\\

We say that $\Gamma$ is in {\bf normal form} if each regular vertex of $\Gamma$ is either a source emanating a single arrow or the base of a loop. If $\Gamma$ has a finite number of regular vertices then after applying (finitely many, 1-step) normalization moves we get to a $\Gamma'$ in normal form (with $L(\Gamma) \cong L(\Gamma')$).\\

Note that the $*$-algebra isomorphism $\varphi$ between $L(\Gamma)$ and $L(\Gamma_v)$ is not graded with respect to the standard $\mathbb{Z}$-gradings on $L(\Gamma)$ and $L(\Gamma_v)$.

\subsection{Jellyfish}
       
 Let $\Gamma$ be a row-finite digraph, $H\subseteq V$ hereditary and saturated, $(H)$ ideal generated by $H$ and 
 $P_H =\{ p \in Path \Gamma \> \vert \> p=p'f, \> f \in E, \> tf \in H, \> sf \notin H\}$.\\

\noindent
$\Gamma_{(H)} := (H \sqcup V_{(H)}, \> E_H \sqcup E_{(H)}, \> s,\> t)$ is called the \textit{jellyfish} of $H$ where 
$$E_H :=s^{-1}(H) \cap t^{-1}(H), \> \> 
V_{(H)} := \{v_p \vert \> p \in P_H\} \> \> \textit{ and } \> \> E_{(H)}:=\{ e_p \vert \> p \in P_H \}$$ such that    
$$\textit{ 
   for all } e \in E_H \qquad 
se := s e, \> \> te:= t e \>\> $$ 
   $$ \textit{ for all } e_p \in E_{(H)} \qquad   se_p := v_p\qquad \quad
   $$
   $$te_p:= v_q \quad \textit{ if } l(p)>1 \textit{  
   and } p=eq  $$
   $$te_p:= tp \quad \textit{ if } l(p)=1 \qquad \qquad \quad $$
   
\begin{theorem}
    $L(\Gamma_{(H)})$ is graded isomorphic to  $(H)$ via $v \leftrightarrow v $ if $v\in H$, for all $p \in P_H$, $v_p \leftrightarrow  pp^*$ and  $e_p \leftrightarrow eqq^*$
where $p=eq$ with $e\in E$. 

\end{theorem}   
   \section{Gelfand-Kirillov Dimension}    
       
 \subsection{A Basis for $L_{\bf k}(\Gamma)$ when $\Gamma$ is a finite digraph whose cycles are  pairwise disjoint }

 When the cycles of $\Gamma$ are pairwise disjoint, the preorder $\leadsto$ defines a partial order on the set of sinks and cycles in $\Gamma$. For each cycle $C$ let's fix a vertex $v_C$ on $C$ and declare that $sC=v_C$.

\begin{theorem} \label{basis}
If $\Gamma$ is a finite digraph whose cycles are pairwise disjoint then $\mathcal{B} :=\{ pq^*\> \vert \> tp=tq \textit{ is a sink } \} \cup \{ pC^nq^* \> \vert \> p, q \in P_{v_C} ,\> n \in \mathbb{Z},\>  C \textit{ is a cycle   } \}$ is a ${\bf k}$-basis for $L_{\bf k}(\Gamma)$ where $C^0:=v_C$  and $C^{-n}:=(C^*)^n$ for $n>0$.
\end{theorem}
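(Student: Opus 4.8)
The plan is to establish two things: first, that the proposed set $\mathcal{B}$ spans $L_{\bf k}(\Gamma)$ as a ${\bf k}$-module, and second, that it is ${\bf k}$-linearly independent. For spanning, I would start from Fact \ref{fact}(ii), which tells us $L_{\bf k}(\Gamma)$ is spanned by all $pq^*$ with $tp=tq$. The goal is then to rewrite each such $pq^*$ as a ${\bf k}$-linear combination of elements of $\mathcal{B}$. The key observation is that since the cycles of $\Gamma$ are pairwise disjoint, any path $p$ in $\Gamma$ has a canonical form: it travels through some (possibly none) of the cycles, winding around each a certain number of times before exiting. Concretely, I would argue that the common target $tp=tq=:w$ is either a sink or a vertex lying on a (unique) cycle $C$. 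In the sink case, $pq^*$ is already of the first type in $\mathcal{B}$. In the cyclic case, I need to show $pq^*$ can be rewritten as $p' C^n (q')^*$ with $p',q'\in P_{v_C}$ and $n\in\mathbb{Z}$; this uses the relation $C^*C=sC$ (Fact \ref{fact}(iii) does \emph{not} apply directly since $C$ may have an exit, but $C^*C=sC$ always holds by (CK1)), together with Fact \ref{fact}(i) to collapse the overlap between the tail of $p$ (which must run along $C$ near $w$) and the tail of $q$.

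For the rewriting step in detail: if $tp=tq=w$ lies on the cycle $C$, write $w$-to-$v_C$ as the initial arc and note that $p$, after possibly its last few arrows, winds around $C$. I would decompose $p = p_0 \, (\text{arc of } C)$ where $p_0 \in P_{v_C}$ up to inserting powers of $C$; more carefully, one shows every path ending at $w$ factors as $a\,b$ where $b$ is the segment of $C$ from $v_C$ to $w$ and $a$ ends at $v_C$, and $a$ itself, being a path into a vertex on $C$ with $C$ the only cycle it can feed into near the end, is either in $P_{v_C}$ or of the form (element of $P_{v_C}$) followed by some number of full loops $C^k$. Then $pq^* = a\,b\,b^*\,a'^*$ and since $b$ is an initial segment of the full loop $C$ one gets $b b^* $ simplifying appropriately; after absorbing loop powers one lands in the stated normal form. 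I would present this as a lemma-free inline computation, emphasizing that pairwise-disjointness of cycles is exactly what guarantees the factorization is unambiguous.

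For linear independence, the cleanest route is via the universal $F_E$-grading from Proposition \ref{Path} (and the Proposition preceding it) together with Proposition \ref{pp^*}(ii). Each basis element $pq^*$ with $tp=tq$ a sink is homogeneous of degree $|p|\,|q|^{-1}\in F_E$, and each $pC^nq^*$ is homogeneous of degree $|p|\,|C|^n\,|q|^{-1}$. A linear dependence among elements of $\mathcal{B}$ therefore breaks up into dependences within each $F_E$-homogeneous component, so it suffices to show that the elements of $\mathcal{B}$ sharing a common $F_E$-degree are independent. Here I would observe that two distinct elements $pC^mq^*$ and $p'C^nq'^*$ (with $p,q,p',q'\in P_{v_C}$) have the same $F_E$-degree only if $p=p'$, $q=q'$ and $m=n$ — because in the free group $|p|\,|C|^m\,|q|^{-1}=|p'|\,|C|^n\,|q'|^{-1}$ forces, reading off reduced words and using that $p,q$ do not traverse $C$ (they are in $P_{v_C}$, so $v_C\notin int$), the equalities $|p|=|p'|$, $|q|=|q'|$, $m=n$; and distinct paths have distinct images in $F_E$ since the path algebra embeds (Proposition \ref{Path}(i)) and distinct paths are distinct basis vectors there, or more directly since a reduced word in $F_E$ determines the path. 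Similarly for the sink-type elements: $|p|\,|q|^{-1}=|p'|\,|q'|^{-1}$ with all four paths genuine (no cancellation, as $p^*$-type letters never appear) forces $p=p'$, $q=q'$. So within each homogeneous component $\mathcal{B}$ has at most one element, and that element is nonzero by Proposition \ref{pp^*}(ii) (for the sink type) and by Proposition \ref{Path}(ii)/(iii) or directly by exhibiting the module of Example \ref{nonzero p} (for the cyclic type, where one checks $pC^nq^*$ acts nontrivially).

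I expect the main obstacle to be the spanning argument, specifically making the normal-form rewriting of an arbitrary $pq^*$ rigorous without drowning in cases: one must handle the interaction between how far $p$ winds around $C$, how far $q$ winds, and the overlap arc near $w$, and one must invoke the disjointness of cycles at the right moment to see that no other cycle interferes with the tail behavior. The linear independence half, by contrast, should go through smoothly once the grading bookkeeping is set up, since it reduces to an elementary statement about reduced words in a free group. I would therefore devote most of the written proof to carefully stating the path factorization ``$p = a C^k b$ with $a\in P_{v_C}$, $b$ the arc $v_C\to tp$'' and deducing the normal form, then dispatch independence via the grading in a short paragraph.
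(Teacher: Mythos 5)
Your proposal has genuine gaps in both halves, and in both cases the missing ingredient is the same: the rewriting and the non-uniqueness phenomena are governed by (CK2), which your argument never really invokes.

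\textbf{Spanning.} Your case split ``the common target $tp=tq=:w$ is either a sink or a vertex lying on a (unique) cycle'' is false: $w$ can be an ordinary vertex on no cycle (take $u\to v\to w$ with a loop at $u$ and $w$ a sink; the spanning element $ee^*$ with $e:u\to v$ has target $v$, which is neither a sink nor on a cycle). More seriously, the mechanism you propose for the cyclic case cannot work. First, a path ending at a vertex $w$ on $C$ need not pass through $v_C$ at all (it can enter the cycle at any of its vertices), so the factorization ``$p=ab$ with $b$ the arc of $C$ from $v_C$ to $w$'' fails. Second, you need $bb^*$ (and $CC^*$) to collapse, not $b^*b$; only $b^*b=tb$ is automatic, while $bb^*=sb$ holds only when there are no exits. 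When $C$ has an exit, the element $e_1e_1^*$ (with $e_1$ the first arrow of $C$) is \emph{not equal to any single} $p'C^nq'^*$: it is a sum of several elements of $\mathcal{B}$, produced only by expanding $te_1=\sum_{sf=te_1}ff^*$ via (CK2). The paper's proof runs in the opposite direction from yours: it repeatedly applies (CK2) at the common target to push $tp=tq$ forward until it reaches a sink or some $v_C$ (terminating because every infinite path meets a cycle, hence some $v_C$), and then derives $CC^*=v_C-\sum pp^*$ — again a (CK2) consequence — to eliminate the occurrences of $C^n(C^*)^n$. Without these expansions your rewriting cannot produce the required linear combinations.

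\textbf{Independence.} The reduction to $F_E$-homogeneous components is the right first step (and is the paper's first step), but your claim that each homogeneous component contains at most one element of $\mathcal{B}$ is false. All elements $pp^*$ with $tp$ a sink (or $tp=v_C$ with $p\in P_{v_C}$) have degree $1\in F_E$: if $f\neq g$ are two arrows into a sink $w$, then $w=ww^*$, $ff^*$ and $gg^*$ are three distinct elements of $\mathcal{B}$ of degree $1$. More generally, whenever $p$ and $q$ share a common terminal arrow, the word $|p||q|^{-1}$ cancels, so distinct pairs $(p,q)$ can share a degree. This is precisely where the paper still has work to do after invoking the grading: the elements of a fixed degree have the form $ar_ir_i^*b^*$ for common $a,b$ and varying $r_i$ with $r_ir_i^*\in\mathcal{B}$; multiplying by $a^*$ and $b$ reduces the dependence to $\sum\lambda_ir_ir_i^*=0$, and one then isolates $\lambda_1$ by multiplying by $r_1^*$ and $r_1$ (sink case) or by $(r_1C^n)^*$ and $r_1C^n$ with $n$ exceeding every $l(r_i)$, after choosing $r_1$ with $tr_1$ maximal and $l(r_1)$ minimal. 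Your argument omits this entire step, which is the real content of the independence proof.
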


\begin{proof} Let $v_C:=sC$. we know that $\{ab^* \>  \vert \> a, \> b \in Path(\Gamma), \> ta=tb \>\}$ spans $L(\Gamma)$. If $ta=tb$ is neither a sink nor $v_C$ for some cycle $C$ then we apply (CK2) to get 
$$ab^*= \sum_{se=ta} aee^*b^* .$$

We repeat this process as needed, until all the terms are of the form $pq^*$ with $tp=tq$ either a sink or $v_C$ for some cycle $C$. The terms with $tp=v_C$ for some cycle $C$ are not modified, so no $v_C$ is repeated. Hence this process terminates since there are no infinite paths in $\Gamma$ not containing cycles. Also we do not create any occurrences of $CC^*$.\\

Starting with the relation (CK2) for $v=v_C$ and applying the process above to the right-hand-side, we get $CC^*=v_C-\sum pp^*$ with no $p$ containing a cycle and every $tp$ being  a sink or $v_D$ for some cycle $D\neq C$. (Here it is essential that the cycles of $\Gamma$ are pairwise disjoint.) Using this equation we eliminate occurrences of $C^n(C^*)^n$ for some positive integer $n$. Hence $\mathcal{B}$ spans $L(\Gamma)$.\\

To see that $\mathcal{B}$ is linearly independent over ${\bf k}$ we assume to the contrary that $\sum_{i=1}^n \lambda_i p_i q_i^*=0$ with all $\lambda_i \neq 0$ and $p_i q_i^* $ are distinct elements of $\mathcal{B}$. Using the universal grading on $L(\Gamma)$ we may assume that $p_i q_i^*$ for all $i$ have the same grade, that is, there are  $a$, $b$ and $r_i$ in $Path \Gamma$ with $r_i r_i^*$ in $\mathcal{B}$ such that $p_i q_i^*=a r_i r_i^* b^*$ for all $i$. Now $a^*(\sum \lambda_i p_i q_i^*)b= \sum \lambda_i r_i r_i^*=0$. If $tr_i$ is a sink for all $i$ then $r_1^* (\sum \lambda_i r_i r_i^*)r_1= \lambda_1 tr_1=0$ since $r_1$ can not be an initial segment of $r_i$ and $r_i$ can not be an initial segment of $r_1$  for $i\neq 1$.\\

If $tr_i$ is not a sink for some $i$ then,  reordering if necessary, we may assume that $tr_1=v_C$ with $tr_1$ maximal among $tr_i$ and $l(r_1)$ minimum among those $r_i$ with $tr_i$ maximal. Pick $n>l(r_i)$ for all $i$. Then $(r_1C^n)^*(\sum \lambda_i r_ir_i^*)r_1C^n= \lambda_1 tr_1=0$, because $r_1C^n$ is too long to be an initial segment of $r_i$ for $i\neq 1$ and  $r_i$ can not be an initial segment of $r_1C^n$: Otherwise (i) If $tr_i=tr_1$ then $r_i=C^m $ since the cycles of $\Gamma$ are pairwise disjoint. Moreover,  $m>0$ because $r_i=v_C$ would give $r_1=v_C=r_i$ by the minimality of $r_1$. But $r_i=C^m$ with $m>0$  contradicts $r_ir_i^* \in \mathcal{B}$. (ii) If $tr_i\neq tr_1$ then $r_i$ can not be an initial segment of $C^nr_1$ since $tr_i$ can not connect to $r_1$. In both cases we have $\lambda_1 v_C = 0$, a contradiction since $\lambda_1 \neq 0$.
  \end{proof}\\

\begin{remark}
We stated Theorem \ref{basis} for finite digraphs because it does not hold without some finiteness hypothesis. For instance, when the digraph has no sinks or cycles, (like the digraph $\Gamma$ below) the set $\mathcal{B}$ defined above is empty.

$$\xymatrix{ {\bullet}^0 
 \ar@{->}[r]   &{\bullet}^1 
 \ar@{->}[r]    &  \bullet^2
 \ar@{->}[r]    &  \bullet^3
  \cdots}$$

\noindent
A reasonable finiteness condition, more general than $\Gamma $ being finite while sufficient for the Theorem \ref{basis} to hold, is:
$$\textit{Every infinite path in $\Gamma$ digraph meets a cycle.}$$ 

\end{remark}

\begin{remark} \label{baz2}
We also get a basis $v\mathcal{B}:=\{ pq^* \in \mathcal{B} \>\vert \> sp=v \} $   of the projective $L(\Gamma)$-module $vL(\Gamma)$ for all $v \in V$: We see that  $v\mathcal{B}$ spans $vL(\Gamma)$ as in the proof of Theorem \ref{basis} and $v\mathcal{B}$ is  linearly independent since it is a subset of $\mathcal{B}$. Similarly, $$\bigcup_{v \in X} v\mathcal{B} \textit{ is a basis for the projective } L(\Gamma)-\textit{module } \big( \sum_{v \in X} v \big) L(\Gamma) =\bigoplus_{v \in X} vL(\Gamma)$$
where $X$ is a finite subset of $V$.
\end{remark}

Let $P_w:=\{ q \in Path \Gamma \> \vert \> \> tq =w \}$ where $w$ is a sink and  $P_v :=\{q \in Path \Gamma \> \vert \> \> tq =v, \> q\neq pC \}$ where $C$ is a cycle in $\Gamma$ with $v=sC$.

\begin{corollary} \label{corner}
If $C$ is a cycle in $\Gamma$ with no exit and  $v=sC$ then
 $vL_{{\bf k}}(\Gamma)$ is a free left $vL_{{\bf k}}(\Gamma)  v$-module with basis $P_v^*:=\{q^*\vert \> q\in P_v\}$.
\end{corollary}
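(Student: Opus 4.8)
The plan is to read everything off the explicit ${\bf k}$-basis of $vL_{\bf k}(\Gamma)$ supplied by Remark \ref{baz2}, together with the identification $vL_{\bf k}(\Gamma)v \cong {\bf k}[x,x^{-1}]$ of Proposition \ref{Path}(ii).

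First I would pin down exactly what $v\mathcal{B}=\{pq^*\in\mathcal{B}\mid sp=v\}$ looks like when $C$ has no exit. Since $v=sC$ and $C$ has no exit, every path with source $v$ is an initial segment of $C\,C\,C\cdots$; in particular no path starting at $v$ reaches a sink, so no basis element of the first type ($pq^*$ with $tp=tq$ a sink) lies in $vL_{\bf k}(\Gamma)$. For a second-type element $p\,D^{m}q^*$ (with $D$ a cycle, $p,q\in P_{v_D}$, $m\in\mathbb{Z}$) to lie in $vL_{\bf k}(\Gamma)$ we need $s(pD^{m})=v$; a positive-length $p$ would then be a path from $v$ to $v_D$ lying on $C$, which by pairwise disjointness of the cycles forces $D=C$ and $p=C^{k}$ with $k\ge 1$, impossible since such $C^{k}$ is not in $P_{v_C}$. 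Hence $p=v=v_D$ and $D=C$, so (using that $P_{v_C}=P_v$ when $C$ has no exit, and $vC^{n}=C^{n}$) we get $v\mathcal{B}=\{\,C^{n}q^*\mid n\in\mathbb{Z},\ q\in P_v\,\}$, a ${\bf k}$-basis of $vL_{\bf k}(\Gamma)$ by Remark \ref{baz2}.

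Next, fix $q\in P_v$. Since $\{C^{n}\mid n\in\mathbb{Z}\}$ is a ${\bf k}$-basis of $vL_{\bf k}(\Gamma)v$ (Proposition \ref{Path}(ii), with $C^{-n}=(C^*)^n$), the left $vL_{\bf k}(\Gamma)v$-submodule $vL_{\bf k}(\Gamma)v\cdot q^*$ is the ${\bf k}$-span of $\{C^{n}q^*\mid n\in\mathbb{Z}\}$, and the map $vL_{\bf k}(\Gamma)v\to vL_{\bf k}(\Gamma)$, $a\mapsto aq^*$, is a left-module homomorphism which is injective: if $a=\sum_n\lambda_n C^{n}$ then $aq^*=\sum_n\lambda_n C^{n}q^*=0$ forces all $\lambda_n=0$ by the linear independence of $v\mathcal{B}$. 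Thus $vL_{\bf k}(\Gamma)v\cdot q^*$ is free of rank one with basis $\{q^*\}$. Finally, as $q$ ranges over $P_v$ the sets $\{C^{n}q^*\mid n\in\mathbb{Z}\}$ partition the basis $v\mathcal{B}$, so $vL_{\bf k}(\Gamma)=\bigoplus_{q\in P_v} vL_{\bf k}(\Gamma)v\cdot q^*$, which is precisely the statement that $P_v^*$ is a free left $vL_{\bf k}(\Gamma)v$-basis.

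The only real work is the first step: the combinatorial bookkeeping that "no exit" collapses $v\mathcal{B}$ to exactly $\{C^{n}q^*\mid n\in\mathbb{Z},\ q\in P_v\}$; once that identification is in hand the freeness is immediate from the disjoint partition of the basis. I expect the one subtle point there to be ruling out a positive-length leftmost path $p$, which is exactly where pairwise disjointness of the cycles and the definition of $P_{v_C}$ are needed.
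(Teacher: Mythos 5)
Your proof is correct and follows exactly the paper's route: the paper's own proof is a one-line assertion that $v\mathcal{B}=\{C^nq^*\mid n\in\mathbb{Z},\ q\in P_v\}$ combined with Proposition \ref{Path}(ii), and your argument simply supplies the combinatorial details (no exit forces $p=v$ and $D=C$ in any basis element $pD^mq^*$ with $sp=v$) that the paper leaves implicit, then reads off freeness from the partition of the basis. No gaps.
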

\begin{proof}
 The basis $\mathcal{B}$ restricted to $vL_{{\bf k}}(\Gamma)$ is $v\mathcal{B}= \{ C^nq^* \vert \> n \in \mathbb{Z} ,\> q \in P_v \}$ and $vL_{{\bf k}}(\Gamma)v \cong {\bf k}[C,C^*]$ by Proposition \ref{Path}(ii), proving the claim.
\end{proof}\\

As remarked before, when there is no confusion we will use the abbreviation $L$ for $L_{\bf k}(\Gamma)$.

\begin{lemma} \label{Temel}
(i) Let $w$ be a sink. If the $L$-module $M$ is generated by $Mw$ then $M=\bigoplus_{q\in P_w} Mwq^*$ as a ${\bf k}$-module and $Mw \otimes_{vLv} vL \cong M$  as right $L$-modules.\\
(ii) Let $C$ be a cycle in $\Gamma$ with no exit, $v:=sC$ and $P_C :=\{q \in Path \Gamma \> \vert \> \> tq =v, \> q\neq pC \}$. If the $L$-module $M$ is generated by $Mv$ then $M=\bigoplus_{q\in P_C} Mvq^*$ as ${\bf k}$-modules and $Mv \otimes_{vLv} vL \cong M$  as right $L$-modules.
\end{lemma}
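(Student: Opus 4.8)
The two parts are parallel, so I would handle (ii) first (the cycle case) since part (i) is the degenerate version where the ``cycle'' is a sink and $vLv\cong {\bf k}$. In both cases the key structural input is Corollary \ref{corner}: $vL$ is a \emph{free} left $vLv$-module with basis $P_C^* := \{q^* \mid q \in P_C\}$ (resp.\ $P_w^*$), which is exactly the set of $*$-paths appearing in the restriction $v\mathcal{B}$ of the basis from Theorem \ref{basis} to $vL$. The first thing I would establish is the ${\bf k}$-module decomposition $M = \bigoplus_{q \in P_C} Mvq^*$. For the spanning part: since $M$ is generated by $Mv$ as an $L$-module, every $m \in M$ is a ${\bf k}$-linear combination of elements $m'a$ with $m' \in Mv$ and $a \in L$; writing $a = v a$ (since $m' = m'v$) and expanding $va$ in the basis $v\mathcal{B} = \{C^nq^* \mid n \in \mathbb{Z},\, q \in P_C\}$, we get $m'C^nq^* = (m'C^n)q^*$ with $m'C^n \in Mv$ (because $C, C^* $ act on $Mv$, as $v = sC = tC$). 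Hence $M = \sum_{q \in P_C} Mvq^*$.

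\textbf{Directness of the sum.} This is the part that needs care and is where I expect the main obstacle to lie — one must rule out ${\bf k}$-linear relations $\sum_{q} n_q q^* = 0$ with $n_q \in Mv$ not all zero. The clean way is to exploit the ${\bf k}$-module (indeed right $L$-module) isomorphism I want to prove simultaneously: define $\Phi: Mv \otimes_{vLv} vL \to M$ by $\Phi(n \otimes x) = nx$; this is well-defined because $n \cdot (\alpha x) = (n\alpha)\cdot x$ for $\alpha \in vLv$ (here $n\alpha \in Mv$ since $Mv$ is a right $vLv$-module), and it is a right $L$-module map since $vL$ is a right $L$-module and the right action is on the second factor. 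Now by Corollary \ref{corner}, $vL = \bigoplus_{q \in P_C} vLv\, q^*$ as left $vLv$-modules, so $Mv \otimes_{vLv} vL \cong \bigoplus_{q \in P_C} Mv \otimes_{vLv} vLv\, q^* \cong \bigoplus_{q \in P_C} Mv$, with the $q$-component $Mv \otimes vLv\,q^* \ni n \otimes \alpha q^* \mapsto n\alpha \otimes q^* \mapsto n\alpha q^*$. Under this identification $\Phi$ becomes the map $\bigoplus_q Mv \to M$, $(n_q)_q \mapsto \sum_q n_q q^*$, whose image is $\sum_q Mvq^* = M$ by the previous paragraph. So $\Phi$ is onto; it remains to show $\Phi$ is injective, i.e.\ that $\sum_q n_q q^* = 0$ forces all $n_q = 0$.

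\textbf{Injectivity.} For this I would argue directly inside $M$ using the module structure, mimicking the linear-independence argument in the proof of Theorem \ref{basis}. Suppose $\sum_{q \in F} n_q q^* = 0$ for a finite set $F \subseteq P_C$ and $n_q \in Mv$. Pick $q_0 \in F$ with $l(q_0)$ minimal; multiply on the right by $q_0$. For $q \in F$, the element $q^* q_0 \in L$ vanishes unless one of $q, q_0$ is an initial segment of the other (Fact \ref{fact}(i)); by minimality of $l(q_0)$, if $q^*q_0 \neq 0$ then $q_0$ is an initial segment of $q$, say $q = q_0 r$, giving $q^*q_0 = r^*$. But $q, q_0 \in P_C$ both have target $v = sC$ with $v \notin \mathrm{int}$ of the path (the condition $q \neq pC$ in $P_C$), and $q_0 r \in P_C$ with $tq_0 = v = tr$ forces $r$ to be a closed path at $v$ avoiding $C$ — since $C$ has no exit, the only arrows out of vertices of $C$ lie on $C$, so any path from $v$ back to $v$ is a power of $C$, hence $r = C^m$; but $r = C^m \in P_C$ is only allowed for $m = 0$ (as $q \ne pC$), forcing $r = v$ and $q = q_0$. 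Thus $\sum_q n_q q^* q_0 = n_{q_0} v = n_{q_0} = 0$. Repeating after deleting $q_0$ from $F$ kills every $n_q$. This gives injectivity, hence $\Phi$ is an isomorphism of right $L$-modules and $M = \bigoplus_{q \in P_C} Mvq^*$.

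\textbf{Part (i).} This is the special case where $w$ is a sink: then $wLw = {\bf k}w \cong {\bf k}$ by Proposition \ref{pp^*}(iii), $P_w$ is the set of all paths ending at $w$, and $wL$ is free over ${\bf k}$ with basis $P_w^*$ by Proposition \ref{Path}(iii). The identical argument — spanning via the $L$-module generation by $Mw$ and expansion in the basis $w\mathcal{B} = \{q^* \mid q \in P_w\}$, plus the same right-multiplication-by-$q_0$ argument (now even simpler, since there is no cycle to worry about: $q^*q_0 \neq 0$ with $l(q_0)$ minimal forces $q = q_0$ outright) — yields $M = \bigoplus_{q \in P_w} Mwq^*$ and $Mw \otimes_{wLw} wL \cong M$. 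I would write (ii) in full and remark that (i) follows by taking the cycle $C$ to be the ``trivial loop'' at the sink $w$, i.e.\ reading $vLv$ as ${\bf k}w$ and $P_C$ as $P_w$ throughout.
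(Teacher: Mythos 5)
Your proposal is correct and follows essentially the same route as the paper: spanning $M=\sum_{q\in P_C}Mvq^*$ by expanding elements of $vL$ in the basis $v\mathcal{B}=\{C^nq^*\mid n\in\mathbb{Z},\,q\in P_C\}$, establishing directness from the fact that $q^*q'=0$ for distinct $q,q'\in P_C$ (the no-exit condition forcing any path from $v$ to $v$ to be a power of $C$, which $P_C$ excludes), and then identifying $Mv\otimes_{vLv}vL$ with $\bigoplus_{q\in P_C}Mv$ via the freeness of $vL$ over $vLv$. The only cosmetic difference is your choice of a minimal-length $q_0$ and iteration, which is unnecessary since the initial-segment argument already rules out all distinct pairs at once; part (i) as the degenerate sink case matches the paper's (omitted) treatment.
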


\begin{proof}
(i) This proof is omitted since it is very similar to and somewhat simpler than the proof of (ii) below.\\

(ii) If $m \in M$ then $m=\sum_j (m_j'\sum_i \lambda_i p_ir_i^*)$ with $m_j' \in Mv$ and $p_ir_i^* \in \mathcal{B}$. Since $C$ has no exit, $tp_i=v=tr_i$. Hence $m=\sum_k m_kq_k^*$ with $q_k \in P_C$ for all $k$, so $M=\sum_{q\in P_C} Mvq^* $. The sum is direct because $q^*q'=0$ if $q \neq q'$ in $P_C$ since $q$ can not be an initial segment of $q'$ and $q'$ can not be an initial segment of $q$.\\
We have an epimorphism $$Mv \bigotimes_{vLv} vL \longrightarrow  M=\bigoplus_{q\in P_C} Mvq^*$$ given by $m\otimes a \mapsto ma$. Since $$Mv \bigotimes_{vLv} vL= \sum_{q \in P_C} (Mv \bigotimes q^*)$$ 
this sum is also direct (it maps onto a corresponding direct sum) and 
 we get the stated isomorphism. 
 \end{proof}\\

\begin{proposition} \label{Ser}
Let $C$ be a cycle in $\Gamma$ with no exit and $v:=sC$. The full subcategory $\mathcal{S}_v$ with  objects M generated by $Mv$ of $\textbf{Mod}_{L_{\bf k}(\Gamma)}$ is a Serre subcategory.
\end{proposition}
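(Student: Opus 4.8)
The plan is to verify the three defining closure properties of a Serre subcategory: closure under subobjects, quotients, and extensions. Throughout I will freely use Lemma \ref{Temel}(ii): an $L$-module $M$ lies in $\mathcal{S}_v$ if and only if $M = \bigoplus_{q \in P_C} Mvq^*$, equivalently $M \cong Mv \otimes_{vLv} vL$ via $m \otimes a \mapsto ma$. The key structural fact is that $vL$ is a \emph{free} left $vLv$-module with basis $P_v^* = \{q^* \mid q \in P_v\}$ (Corollary \ref{corner}, noting $P_v = P_C$ in the no-exit case), so the functor $- \otimes_{vLv} vL$ from $vLv$-modules to $L$-modules is exact, and it is a one-sided inverse to the functor $M \mapsto Mv$ on $\mathcal{S}_v$.

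First I would handle quotients: if $M$ is generated by $Mv$ and $N$ is a quotient of $M$, then the image of $Mv$ generates $N$ and equals $Nv$ (since $\cdot v$ is the projection onto the $v$-component and quotient maps respect the $V$-grading $M = \oplus_u Mu$), so $N \in \mathcal{S}_v$. Next, subobjects: let $N \subseteq M$ with $M \in \mathcal{S}_v$. Here I use that $Mv$ is naturally a right $vLv$-module and $M \cong Mv \otimes_{vLv} vL$ with $vL$ free as a left $vLv$-module; writing $M = \bigoplus_{q \in P_C} Mvq^*$, an element $n = \sum_q m_q q^* \in N$ (finite sum, $m_q \in Mv$) has $nq_0 = \sum_q m_q q^* q_0$; since $q^*q_0 = \delta_{q,q_0} v$ for $q, q_0 \in P_C$ (neither is an initial segment of the other when $q \neq q_0$), we get $nq_0 = m_{q_0} \in Nv$. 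Hence each component $m_q q^*$ of $n$ lies in $Nv \cdot q^* \subseteq NvL$, so $N = \sum_{q} Nv q^*$ is generated by $Nv$, i.e. $N \in \mathcal{S}_v$.

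Finally, extensions: suppose $0 \to A \to M \to B \to 0$ is exact with $A, B \in \mathcal{S}_v$. Applying the exact functor $\cdot v$ (projection onto the $v$-component is exact since $M = \oplus_u Mu$ functorially) gives $0 \to Av \to Mv \to Bv \to 0$. Now consider the canonical map $\theta_M : Mv \otimes_{vLv} vL \to M$. It fits into a commutative diagram with the analogous maps $\theta_A, \theta_B$; since $vL$ is left-$vLv$-free, $- \otimes_{vLv} vL$ preserves the short exact sequence of $v$-components, and $\theta_A, \theta_B$ are isomorphisms by hypothesis (Lemma \ref{Temel}(ii)). The five lemma then forces $\theta_M$ to be an isomorphism, so $M$ is generated by $Mv$ and $M \in \mathcal{S}_v$. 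Together with the trivial observation that the zero module lies in $\mathcal{S}_v$ and that the property is isomorphism-invariant, this shows $\mathcal{S}_v$ is a Serre subcategory.

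The main obstacle I anticipate is the bookkeeping in the subobject case: one must be careful that "$N$ is generated by $Nv$" genuinely follows from "each $q^*$-component of each element of $N$ lies in $Nv q^*$," and that no unitality subtlety intervenes — but this is controlled by the orthogonality relations $q^* q_0 = \delta_{q,q_0} v$ on $P_C$ and by Lemma \ref{unital}, since submodules of unital modules are unital. The freeness of $vL$ over $vLv$ from Corollary \ref{corner} is what makes both the subobject argument and the five-lemma argument go through cleanly; without it the functor $- \otimes_{vLv} vL$ need not be exact and the extension step could fail.
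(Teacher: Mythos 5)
Your proof is correct and the decisive step — closure under submodules via the decomposition $M=\bigoplus_{q\in P_C}Mvq^*$ and the orthogonality $q^*q_0=\delta_{q,q_0}v$ for $q,q_0\in P_C$ — is exactly the paper's argument; the paper dismisses quotients and extensions as routine, which you spell out explicitly. Your five-lemma treatment of extensions is valid but heavier than necessary: a direct lifting (lift the $Bv$-components of the image of $m$ to $Mv$ and absorb the difference into $A=AvL$) establishes $M=MvL$ without appealing to the freeness of $vL$ over $vLv$.
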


\begin{proof}
 Clearly, the $0$ module is in $\mathcal{S}_v$  and if $M$ is in $\mathcal{S}_v$, then any module isomorphic to $M$ is also in $\mathcal{S}_v$. It is routine to see that $\mathcal{S}_v$ is closed under quotients and extensions. Hence all we need to show is that $\mathcal{S}_v$ is closed under taking submodules. Let $M$ be in $\mathcal{S}_v$ and $N$ be a submodule of $M$. If $m \in N \subset M$ then $m =\sum_{i=1}^n m_iq_i^*$ with $m_i \in Mv$ and $q_i \in P_C$ for all $i$ by Lemma \ref{Temel}. Since $q_i^*q_j=0$ for $i\neq j$ we have $mq_j=m_j \in Nv$. It follows that $N$ is generated by $Nv$. 
\end{proof}

\begin{theorem} \label{Ser2}
If $v$ is a sink in $\Gamma$ or $v=sC$ where $C$ is a cycle in $\Gamma$ with no exit then the full subcategory $\mathcal{S}_v$ of $L$-modules $M$ that are generated by $Mv$ is equivalent to the category of $vLv$-modules via the functors
$$\underline{\>\>\>}\bigotimes_{vLv} vL : \textbf{Mod}_{vLv} \longrightarrow \textbf{Mod}_L$$  $$Hom^L (vL, \underline{\>\>\>}) :\textbf{Mod}_L \longrightarrow \textbf{Mod}_{vLv} \> .$$  
Moreover, an $L$-module $M$ generated by $Mv$ is simple if and only if $Mv$ is a simple $vLv$-module and also $M$ is projective if and only if $Mv$ is a projective $vLv$-module.
\end{theorem}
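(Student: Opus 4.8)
The plan is to establish the category equivalence first and then deduce the statements about simplicity and projectivity from standard Morita-type arguments. The starting point is Lemma \ref{Temel}, which already gives us, for any $M$ in $\mathcal{S}_v$, a natural isomorphism $Mv \otimes_{vLv} vL \cong M$ of right $L$-modules. I would first check that $vL$ is an $(vLv, L)$-bimodule (immediate, since $vLv$ acts on the left and $L$ on the right, and the actions commute), so that the functor $\underline{\ \ }\otimes_{vLv} vL : \mathbf{Mod}_{vLv} \to \mathbf{Mod}_L$ is well-defined; by Corollary \ref{corner} (in the cycle case) or Proposition \ref{Path}(iii) together with Remark \ref{baz2} (in the sink case) the bimodule $vL$ is free as a left $vLv$-module, so this functor is exact. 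Next I would verify that its essential image lands in $\mathcal{S}_v$: for a $vLv$-module $X$, the module $X\otimes_{vLv} vL$ is generated by $X\otimes v = (X\otimes_{vLv}vL)v$, since $vL$ is generated as a right $L$-module by $v$. Conversely the functor $\mathrm{Hom}^L(vL, \underline{\ \ }) : \mathbf{Mod}_L \to \mathbf{Mod}_{vLv}$ is naturally isomorphic to the functor $M \mapsto Mv$ (via $\varphi \mapsto \varphi(v)$, using that $vL$ is a cyclic projective $L$-module generated by the idempotent $v$, so $\mathrm{Hom}^L(vL,M)\cong Mv$ as $vLv$-modules).

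With these identifications, the two functors become $X \mapsto X\otimes_{vLv} vL$ and $M \mapsto Mv$. The unit of the adjunction at $X$ is the map $X \to (X\otimes_{vLv} vL)v = X\otimes_{vLv} v(Lv) = X\otimes_{vLv} vLv \cong X$, which is an isomorphism because $v$ is the identity of $vLv$ and $v(Lv)=vLv$ (the only basis elements $pq^*\in v\mathcal{B}$ with $tp = tq = v$ — recall $v$ is a sink or $v=sC$ with $C$ exitless — are the powers $C^n$, spanning $vLv$). The counit at $M$ (for $M\in\mathcal{S}_v$) is exactly the isomorphism $Mv\otimes_{vLv} vL \xrightarrow{\ \sim\ } M$ of Lemma \ref{Temel}. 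Hence the two functors restrict to mutually inverse equivalences between $\mathbf{Mod}_{vLv}$ and the Serre subcategory $\mathcal{S}_v$ (which is a Serre subcategory by Proposition \ref{Ser}).

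For the last two assertions I would use that an equivalence of categories preserves and reflects all categorical properties. Simplicity: an object is simple iff it has exactly two subobjects; since $M\mapsto Mv$ is an equivalence $\mathcal{S}_v \to \mathbf{Mod}_{vLv}$, it takes simple objects of $\mathcal{S}_v$ to simple objects of $\mathbf{Mod}_{vLv}$ and back. One must note that a simple $L$-module $M$ generated by $Mv$ is automatically in $\mathcal{S}_v$ (if $Mv\neq 0$ then the submodule it generates is nonzero, hence all of $M$), and conversely the submodule lattice of $M$ inside $\mathbf{Mod}_L$ coincides with that inside $\mathcal{S}_v$ because $\mathcal{S}_v$ is closed under submodules; so "simple as an $L$-module" and "simple as an object of $\mathcal{S}_v$" agree. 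Projectivity: $\underline{\ \ }\otimes_{vLv} vL$ sends $vLv$ to $vL$, which is projective over $L$, and sends free modules to direct sums of copies of $vL$, hence to projective $L$-modules; being left adjoint to the exact functor $M\mapsto Mv$, it preserves projectives, so if $Mv$ is projective then $M\cong Mv\otimes_{vLv} vL$ is projective. For the converse, the functor $M \mapsto Mv$ is exact (it is an equivalence, or directly: $Mv$ is a direct summand of $M$ as a $\mathbf{k}$-module and the functor is additive and preserves the relevant exact sequences) and preserves epimorphisms, and it carries a projective $L$-module $P$ generated by $Pv$ to $Pv$; chasing a lifting diagram through the equivalence shows $Pv$ is projective over $vLv$.

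I expect the only genuinely delicate point to be the bookkeeping around which modules actually lie in $\mathcal{S}_v$ and the verification that $v(Lv) = vLv$ reduces (via the basis $\mathcal{B}$ of Theorem \ref{basis}) to $\mathbf{k}[x,x^{-1}]$ or $\mathbf{k}$ — but this is exactly Proposition \ref{Path}(ii),(iii), so it is already in hand. Everything else is a formal consequence of the adjunction $\big(\underline{\ \ }\otimes_{vLv} vL,\ \mathrm{Hom}^L(vL,\underline{\ \ })\big)$ becoming an adjoint equivalence once the unit and counit are identified with the isomorphisms supplied by Lemma \ref{Temel} and the idempotent-corner description of $vL$.
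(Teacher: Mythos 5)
Your proposal is correct and follows essentially the same route as the paper: both identify $\mathrm{Hom}^L(vL,\underline{\;\;})$ with $M\mapsto Mv$, use the freeness of $vL$ over $vLv$ (Corollary \ref{corner}) and Lemma \ref{Temel} to show the unit and counit of the adjunction are isomorphisms on $\mathbf{Mod}_{vLv}$ and $\mathcal{S}_v$, and then transport simplicity and projectivity through the equivalence (the paper phrases the projectivity step as ``projective in $\mathcal{S}_v$ iff projective in $\mathbf{Mod}_L$'' via the same kind of lifting diagram you describe).
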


\begin{proof}
Since $vL$ is a $(vLv, L)$-bimodule $\underline{\>\>\>}\bigotimes_{vLv} vL $ is a functor from $\textbf{Mod}_{vLv}$ to $\textbf{Mod}_L$. Its right adjoint $Hom^L(vL, \underline{\>\>\>})$ is naturally isomorphic to the functor sending $M$ to $Mv$ and $\varphi :M \longrightarrow N$ to $\varphi \vert_{Mv}: Mv \longrightarrow Nv$. Both are exact functors because $vL$ is a free left $vLv$-module by Corollary \ref{corner} and $vL$ is a projective  $L$-module. If $v=sC$ and $A$ is a $vLv \cong {\bf k}[x,x^{-1}]$-module then $A\otimes vL =\oplus_{q\in P_v} (A\otimes q^*)$. The unique $q\in P_C$ with $sq=v$ is $v$. Hence $(A\otimes vL)v$ is naturally isomorphic to $A$, so the composition $Hom^L (vL,\> \underline{\>\>\>}\bigotimes_{vLv} vL)$ is naturally isomorphic to the identity functor. Since $Mv \bigotimes vL \cong M$ by Lemma \ref{Temel} when $M$ is generated by $Mv$, the functor  $\underline{\>\>\>}\bigotimes vL $ maps  $\textbf{Mod}_{vLv}$ into $\mathcal{S}_v$. Also $Hom^L (vL, \underline{\>\>\>})$ restricted to $\mathcal{S}_v$ gives an equivalence with the category $\textbf{Mod}_{vLv}$ by Proposition \ref{Ser}. The proof when $v$ is a sink is similar but easier.\\

$M$ in $\mathcal{S}_v$ is simple as an $L$-module if and only if $M$ is simple in $\mathcal{S}_v$ since all submodules of $M$ are also in $\mathcal{S}_v$. Since $Hom^L (vL, \underline{\>\>\>})$ gives an equivalence between $\mathcal{S}_v$ and $\textbf{Mod}_{vLv}$, we see that $Mv\cong Hom^L (vL, M)$ is simple if and only if $M$ is simple.\\

If $M$ in $\mathcal{S}_v$ is projective as an $L$-module then it is clearly projective in the subcategory $\mathcal{S}_v$. If $M$ is a projective object in $\mathcal{S}_v$ and 
$$\begin{array}{cc}
     &M\\
     &\downarrow  \\
    A \rightarrow & B
\end{array}$$
is a diagram of $L$-modules with $A\longrightarrow B$ onto then $Av \longrightarrow Bv$ is also onto. We get a commutative diagram $$\begin{array}{ccc}
     &&M=MvL\\
   &\swarrow &\downarrow  \\
    AvL &\longrightarrow & BvL\\
    \hookdownarrow& &\hookdownarrow\\
    A&\longrightarrow &  B
\end{array}$$
since $M$ is a projective object in $\mathcal{S}_v$. Hence
$M$ is a projective $L$-module. Thus $M$ is projective $L$-module if and only if $M$ is a projective object in $\mathcal{S}_v$ if and only if $Mv$ is a projective $vLv$-module, because the functor $Hom^L (vL, \underline{\>\>\>})$ gives an equivalence between $\mathcal{S}_v$ and $\textbf{Mod}_{vLv}$ preserving projectivity. 
\end{proof}

\subsection{Gelfand-Kirillov Dimension of $L_{\bf k}(\Gamma)$}

Let $\mathbb{F}$ be a field, $A$ a finitely generated $\mathbb{F}$-algebra with 1. The Gelfand-Kirillov dimension of $A$ is:
 $$GK dim(A)=\limsup_{n \to \infty} \frac{log (dim(W^n))}{ log (n)}$$
 where $W$ is a finite dimensional $\mathbb{F}$-subspace generating $A$ with $1\in W$, and $W^n$ is the $\mathbb{F}$-span of  $n$-fold products of elements from $W$. The Gelfand-Kirillov dimension of $A$ is independent of the choice of $W$. The algebra $A$ has polynomial growth if and only if $GKdim(A)$ is finite.\\

The Leavitt path algebra $L_{\mathbb{F}}(\Gamma)$ is finitely generated if and only if $\Gamma$ is finite. In which case $L_{\mathbb{F}}(\Gamma)$ has $1=\sum_{v \in V} v$. If $\Gamma$ has intersecting cycles, say $C$ and $D$, with $sC=u=sD$ then the subalgebra generated by $C$ and $D$ is a free algebra in 2 noncommuting variables by Propositon \ref{Path}(i). Therefore $\mathbb{F}\langle C,D \rangle \>$ and hence $L_{\mathbb{F}}(\Gamma)$ have exponential growth, so $GKdim L_{\mathbb{F}}(\Gamma)$ is infinite. The converse is also true:

\begin{theorem}\cite[Theorem 5]{aajz12} \label{aajz12}
Let $\mathbb{F}$ be a field and $\Gamma$ be a finite digraph. The Gelfand-Kirillov dimenson of $L_{\mathbb{F}}(\Gamma)$ is finite if and only if the cycles of $\Gamma$ are pairwise disjoint.
\end{theorem}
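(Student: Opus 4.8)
The plan is to establish both directions; the forward (``only if'') direction has already been sketched in the text, so I would make that argument precise first, then concentrate on the harder converse. For the forward direction: if $\Gamma$ has two cycles $C$ and $D$ sharing a vertex $u=sC=sD$, then by Proposition \ref{Path}(i) the path algebra $\mathbb{F}\Gamma$ embeds in $L_\mathbb{F}(\Gamma)$, so the words in $C$ and $D$ (which are genuinely distinct paths in $\Gamma$, since $C$ and $D$ are distinct cycles through $u$) are $\mathbb{F}$-linearly independent in $L_\mathbb{F}(\Gamma)$. Thus the subalgebra they generate is free on two generators, which has exponential growth; since $GK\dim$ is monotone under subalgebras, $GK\dim L_\mathbb{F}(\Gamma)=\infty$. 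One should also remark that we may assume $C$ and $D$ are the \emph{shortest} cycles through $u$ so that no Cuntz--Krieger relation collapses the words, or simply invoke that distinct paths in $\Gamma$ stay linearly independent by Proposition \ref{pp^*}(ii) and Proposition \ref{Path}(i).

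For the converse, assume the cycles of $\Gamma$ are pairwise disjoint; I would use the explicit $\mathbf{k}$-basis $\mathcal{B}$ of Theorem \ref{basis} (with $\mathbf{k}=\mathbb{F}$) to bound the growth. Take the generating subspace $W=\mathrm{span}_\mathbb{F}(V\sqcup E\sqcup E^*)$ together with $1$. An element of $W^n$ is a linear combination of products of at most $n$ generators, and by Fact \ref{fact}(i)--(ii) any such product simplifies to $0$ or to a single term $pq^*$ (or $pC^mq^*$) with $l(p)+l(q)$ (respectively $l(p)+l(q)+|m|\,l(C)$) bounded by a constant multiple of $n$. So it suffices to count the basis elements of $\mathcal{B}$ of bounded ``total length''. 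The number of paths in $\Gamma$ of length $\le n$ is what controls everything: I would show this grows polynomially in $n$, with degree equal to the length of the longest chain $\Gamma_1\leadsto\Gamma_2\leadsto\cdots$ of cycles one can pass through consecutively (each cycle contributing the ``$C^m$'' freedom which is linear in $n$, and chaining $d$ cycles giving growth $\sim n^d$). Consequently $\dim W^n$ is polynomial in $n$, so $GK\dim L_\mathbb{F}(\Gamma)<\infty$.

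More concretely, for the counting step I would argue: because $\Gamma$ is finite with pairwise disjoint cycles, every path decomposes uniquely as an alternation of ``acyclic detours'' and powers $C_i^{m_i}$ of cycles, where the cycles $C_1,\dots,C_r$ appear in an order compatible with the partial order $\leadsto$ on cycles. The number of acyclic-detour shapes is finite (bounded by the finitely many paths avoiding all cycles), and for a path of length $\le n$ using a fixed ordered tuple of $r$ cycles the exponents satisfy $\sum m_i l(C_i)\le n$, contributing $O(n^r)$ choices. Summing over the finitely many ordered tuples of cycles, the count of paths of length $\le n$ is $O(n^d)$ where $d=\max r$ over such chains (this $d$ is precisely the invariant that Theorems \ref{height} and \ref{k-height} will identify with $GK\dim$). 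Feeding this into the basis description $\mathcal{B}$ gives $\dim W^n=O(n^{d+1})$ or so, hence finite $GK\dim$.

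The main obstacle is the bookkeeping in the converse: one must argue carefully that an arbitrary length-$\le n$ product of generators reduces, via (CK1), (CK2) and the relation $CC^*=sC-\sum pp^*$ from the proof of Theorem \ref{basis}, to a \emph{bounded} linear combination of basis elements $pq^*$ or $pC^mq^*$ whose ``total length'' is $O(n)$ --- in particular that applying (CK2) to expand a vertex does not blow up lengths uncontrollably, and that the disjointness of cycles prevents any term of the form $C^mC^{*m}$ from forcing an unbounded rewriting. Once the reduction is seen to be length-bounded, the polynomial count of paths (itself elementary, given finiteness and disjointness of cycles) finishes the argument. Since this is the result of \cite[Theorem 5]{aajz12}, I would cite it for the hard direction and only indicate the basis-counting proof as an alternative consistent with our setup.
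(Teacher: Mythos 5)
Your proposal is correct and follows essentially the same route as the paper: the ``only if'' direction is exactly the paper's remark preceding the theorem (distinct cycles through a common vertex generate a free submonoid of $Path(\Gamma)$, so by Proposition \ref{Path}(i) the subalgebra $\mathbb{F}\langle C,D\rangle$ is free and has exponential growth), and the converse is the same monomial-counting argument the paper carries out in the upper-bound half of Theorem \ref{height}. Your worry about (CK2) blowing up lengths is moot for the upper bound: one only needs that a product of $n$ generators collapses via (V), (E), (CK1) to $0$ or a single monomial $pq^*$ of total length $O(n)$ (Fact \ref{fact}), since a polynomially sized spanning set of $W^n$ suffices and no rewriting into the basis $\mathcal{B}$ is required.
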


We will prove a finer version of Theorem \ref{aajz12} which characterizes the Gelfand-Kirillov dimension of $L_{\mathbb{F}}(\Gamma)$ graph theoretically, via a \textit{height} function defined on the sinks and cycles of a finite digraph whose cycles are pairwise disjoint. \\

Let the cycles of $\Gamma$ be pairwise disjoint. Then the pre-order $\leadsto$ defines a partial order on the set of sinks and cycles in $\Gamma$. 
We define the {\bf height} function on the sinks and the cycles of a finite digraph $\Gamma$: The \textit{height of a sink} is 0. The \textit{height of a cycle with no exit} is 1. 
The \textit{height of a cycle} $C$ \textit{with an exit} is recursively defined as: $\textit{ht} (C) = 2+max \{ \textit{ht}( x)  \> \vert \>  C \leadsto x\> , \> C\neq x \} $. (This also defines the height of the vertices of the complete reduction since they are identified with the sinks and the cycles of $\Gamma$.) We define the height of $\Gamma$ to be the maximum of the heights of its cycles or 0 if $\Gamma$ has no cycles. \\

\begin{theorem} \label{height}
If $\mathbb{F}$ is a field and $\Gamma$ is a finite digraph whose cycles are pairwise disjoint then $GK dim L_{\mathbb{F}}(\Gamma)=ht(\Gamma)$. 
\end{theorem}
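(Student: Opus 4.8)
The plan is to compute the growth of $L_{\mathbb{F}}(\Gamma)$ using the explicit basis $\mathcal{B}$ from Theorem \ref{basis}, and to match it to the combinatorial quantity $\textit{ht}(\Gamma)$ by an induction on the height. First I would set up the generating subspace $W$ to be the span of $V \sqcup E \sqcup E^*$ together with $1$; then $W^n$ is spanned by those basis elements $pq^*$ (or $pC^mq^*$) of $\mathcal{B}$ whose total length $l(p)+l(q)$ (respectively $l(p)+l(q)+|m|\,l(C)$) is at most a constant multiple of $n$, so that $\dim W^n$ is, up to polynomial-in-$n$ factors that do not affect the $\limsup$, the number of basis elements of bounded ``weighted length''. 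Thus $GKdim$ equals the order of polynomial growth of the counting function $N(n) := \#\{\beta \in \mathcal{B} : \mathrm{wt}(\beta) \le n\}$.

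The core of the argument is then a two-sided estimate of $N(n)$ by induction on $\textit{ht}(\Gamma)$, organized according to where the common target $w=tp=tq$ of a basis element lives. Basis elements with $w$ a sink contribute $\#\{(p,q): tp=tq=w\}$, which grows polynomially with order exactly $2\,\textit{ht}(\Gamma_{\leadsto w}) $ worth of ``room'' — more precisely, paths into $w$ of length $\le n$ are counted by a polynomial whose degree is governed by the longest descending chain of cycles feeding into $w$, because each cycle $C$ on the way to $w$ contributes a free parameter (the number of times a path winds around $C$), while the acyclic parts only contribute bounded multiplicities. Basis elements of the form $pC^mq^*$ with $p,q\in P_{v_C}$ contribute an extra free integer parameter $m$ (ranging over $|m|\lesssim n/l(C)$), i.e. one more unit of growth on top of the growth of $\#\{(p,q)\in P_{v_C}\times P_{v_C}\}$. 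Carefully accounting: a cycle $C$ with no exit contributes growth of order $1$ (just the parameter $m$, since $P_{v_C}$ is finite when $C$ has no exit — indeed $vLv\cong \mathbb{F}[x,x^{-1}]$ by Proposition \ref{Path}(ii)), matching $\textit{ht}(C)=1$; and a cycle $C$ with an exit, whose predecessors include sinks/cycles $x$ with $C\leadsto x$, $C\neq x$, gets growth $2 + \max_x \textit{ht}(x)$ because a path $p\in P_{v_C}$ may first travel through the predecessor region achieving growth $\max_x\textit{ht}(x)$ on each of $p$ and $q$ (giving the ``$2+$''), and additionally wind around $C$. This is exactly the recursive definition of $ht$. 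The base cases ($\Gamma$ acyclic, giving $GKdim=0$ since $L_{\mathbb{F}}(\Gamma)$ is finite-dimensional by Example \ref{example}(ii); or $\Gamma$ a single cycle with no exit, giving $GKdim = 1$ since $L_{\mathbb{F}}(\Gamma)\cong M_n(\mathbb{F}[x,x^{-1}])$ by Example \ref{example}(iii)) anchor the induction.

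For the inductive step I would use Proposition \ref{hereditary}: pick a maximal sink or maximal cycle $C$, let $H:=\overline{\{v_C\}}$ (or $\overline{\{w\}}$) be its hereditary saturated closure, so that $\Gamma_{\leadsto C}\setminus(\text{stuff below})$ is controlled and $L_{\mathbb{F}}(\Gamma)/(H)\cong L_{\mathbb{F}}(\Gamma_{/H})$ has strictly smaller height, while the ideal $(H)$ is, via the basis $\mathcal{B}$, a union of ``columns'' $vL$ for $v\in H$ whose growth I can read off directly; then $GKdim L_{\mathbb{F}}(\Gamma) = \max\{GKdim L_{\mathbb{F}}(\Gamma_{/H}),\, GKdim(H)\text{-part}\}$ by the standard fact that $GKdim$ of an algebra equals the max of the $GKdim$ of an ideal and the quotient (for finitely generated algebras, after checking the ideal is finitely generated as a one-sided module, which holds here since $V$ is finite). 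Alternatively, and perhaps more cleanly, I could avoid general GK-dim machinery and argue entirely by the direct counting estimate on $N(n)$ sketched above, bounding $N(n)$ between two polynomials of degree $\textit{ht}(\Gamma)$ by decomposing $\mathcal{B}$ over the finitely many sinks and cycles and bounding the number of paths between two vertices of length $\le n$ by induction on height.

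The main obstacle I anticipate is the bookkeeping in the path-counting: showing that the number of paths $p$ with $tp=v_C$ and $l(p)\le n$ grows polynomially of degree \emph{exactly} (not just at most) $\max\{\textit{ht}(x) : C\leadsto x,\ C\ne x\}$ on the region strictly below $C$, and that combining the $p$-count, the $q$-count and the winding parameter $m$ gives precisely $2+\max$ rather than something smaller due to unexpected linear relations — but Theorem \ref{basis} guarantees $\mathcal{B}$ is genuinely a basis, so there are no such relations, and the free-parameter count is honest. The upper bound ($GKdim \le \textit{ht}$) is the easier half (bound the number of ways a path can wind around the chain of nested cycles); the lower bound ($GKdim \ge \textit{ht}$) requires exhibiting, for a height-$h$ cycle $C$, an explicit family of $\Theta(n^{h})$ linearly independent elements of $W^{O(n)}$, which one builds by concatenating high powers of the successive cycles $C=C_0 \leadsto C_1 \leadsto \cdots$ in a maximal descending chain realizing the height — again using Theorem \ref{basis} to certify independence.
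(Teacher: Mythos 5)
Your overall strategy is the paper's: fix a finite generating subspace $W$ containing $1$, observe that $\dim W^n$ is (up to factors irrelevant to the $\limsup$) the number of elements of the basis $\mathcal{B}$ of Theorem \ref{basis} of length at most a constant times $n$, obtain the lower bound from explicit families with free winding exponents along chains of cycles, and the upper bound by noting that each basis element is determined by such a chain, boundedly many cycle-free connecting paths, and the exponents. However, the ``careful accounting'' at the heart of your induction confuses predecessors with successors, and this is a genuine error, not just terminology. The basis elements you attach to a cycle $C$ are $pC^mq^*$ with $p,q\in P_{v_C}$, i.e.\ $p$ and $q$ \emph{end} at $v_C$; their number is governed by the cycles lying \emph{above} $C$ (those $D$ with $D\leadsto C$), one free winding exponent for each such cycle traversed by $p$ and one for each traversed by $q$. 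By contrast $ht(C)=2+\max\{ht(x):C\leadsto x,\ x\neq C\}$ is computed from the sinks and cycles \emph{below} $C$. Consequently two of your intermediate claims are false. First, ``$P_{v_C}$ is finite when $C$ has no exit'' fails whenever $C$ has a predecessor cycle: in $qS^3$ (a loop $C_1$ at $u$, an arrow $f$ to $v$, an exitless loop $C_2$ at $v$) the set $P_{v_{C_2}}$ contains $C_1^af$ for all $a\geq 0$, and the elements $C_1^afC_2^mf^*(C_1^*)^b$ give cubic, not linear, growth --- Proposition \ref{Path}(ii) controls the corner $v Lv$, not $P_{v_C}$. Second, ``the number of paths $p$ with $tp=v_C$, $l(p)\le n$ grows with degree exactly $\max\{ht(x):C\leadsto x,\ x\neq C\}$'' fails in the same example for $C=C_1$: the count is constant while the right-hand side is $1$. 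Run as written, your induction attributes the top-degree growth to the wrong stratum of $\mathcal{B}$ and its base case (exitless cycles contribute degree $1$) is false. Relatedly, ``concatenating high powers of the successive cycles in a maximal descending chain'' produces only about $n^{\lceil h/2\rceil}$ elements; to reach $n^{h}$ you need both halves $p$ and $q^*$ of a basis element to descend through the chain.

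The repair is to organize the count by the common target $w=tp=tq$ (a sink or a base vertex $v_C$) rather than trying to make each cycle ``contribute'' its own height. The basis elements with target $w$ are parametrized by the winding exponents of the descending chain of cycles traversed by $p$, those of the chain traversed by $q$, the exponent $m$ when $w=v_C$, and boundedly many cycle-free connecting paths; so their count grows with degree $(\#\text{cycles above }w\text{ used by }p)+(\#\text{cycles above }w\text{ used by }q)+[\,w\text{ on a cycle}\,]$. Maximizing over $w$ and over the two chains --- which may be taken equal, and this is where the ``$2+$'' in the recursion really comes from --- gives exactly $ht(\Gamma)$. That is precisely the paper's bookkeeping with its two chains $C_1\leadsto\cdots\leadsto C_k$ and $C_{k+m}\leadsto\cdots\leadsto C_{k+1}$ meeting at the bottom and its binomial-coefficient count of exponent tuples. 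With that correction the remainder of your outline (linear independence from Theorem \ref{basis} for the lower bound, the $n$-independent bound on the number of chains and connecting paths for the upper bound) goes through.
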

\begin{proof}
Let $W$ be the span of all the cycles of $\Gamma$, their duals, all paths not containing any cycles and their duals. Note that $1=\sum v \in W$ and $W$ generates $L_{\mathbb{F}}(\Gamma)$ since every element in the basis $\mathcal{B}$ is contained in $W^n$ for some $n$. \\

Let $C_1 \leadsto C_2 \leadsto \cdots \leadsto C_k$  and $C_{k+m} \leadsto C_{k+m-1} \leadsto \cdots \leadsto C_{k+1} \leadsto C_k$ be distinct cycles with $p_1$ a path containing no cycles from $sC_1$ to $sC_2$, similarly $p_2$ from $sC_2$ to $sC_3$, $\cdots , p_{k-1}$ from $sC_{k-1}$ to $sC_k$, $p_k$ from $sC_{k+1}$ to $sC_k$, $\cdots$, $p_{k+m-1}$ from $sC_{k+m}$ to $sC_{k+m-1}$. The elements in $W^n$ of the form $$sC_1^{n_0}C_1^{n_1}p_1C_2^{n_2}p_2 \cdots C_k^{n_k}p_k^*(C_{k+1}^*)^{n_{k+1}} \cdots p_{k+m-1}^* (C_{k+m}^*)^{n_{k+m}}$$  $$=C_1^{n_1}p_1C_2^{n_2}p_2 \cdots C_k^{n_k}p_k^*(C_{k+1}^*)^{n_{k+1}} \cdots p_{k+m-1}^* (C_{k+m}^*)^{n_{k+m}}$$ where $n_0+n_1+\cdots +n_{k+m}=n-(k+m-1)$. These are distinct elements of the  basis $\mathcal{B}$ and there are $n+1 \choose k+m$ of them for $n$ large (this counting problem is equivalent to counting the number of ways of placing $n-(k+m-1)$ identical coins into $k+m$ distinct pockets). Hence $dim (W^n)$ has a lower bound which is a polynomial of degree $k+m$ in n. Thus  $GKdim L_{\mathbb{F}}(\Gamma) \geq k+m$. If $ht(\Gamma)=2k-1$  then we can find distinct cycles $C_1 \leadsto C_2 \leadsto \cdots \leadsto C_k$. Setting  $C_{k+1}:=C_{k-1}$,  $C_{k+2}:=C_{k-2}$, $\cdots C_{2k-1} =C_1$, we get $GKdim L_{\mathbb{F}}(\Gamma) \geq 2k-1=ht(\Gamma)$.\\

 Similarly, given $C_1 \leadsto C_2 \leadsto \cdots \leadsto C_k \leadsto w$  and $C_{k+m} \leadsto C_{k+m-1} \leadsto \cdots \leadsto C_{k+1} \leadsto w $ be distinct cycles with $w$ a sink and $p_1$ a path containing no cycles from $sC_1$ to $sC_2$, also $p_2$ a path (containing no cycles) from $sC_2$ to $sC_3$, $\cdots , p_k$ from $sC_k$ to $w$, $p_{k+1}$ from $sC_{k+1}$ to $w$, $\cdots$, $p_{k+m}$ from $sC_{k+m}$ to $sC_{k+m-1}$.
The elements in $W^n$ of the form $$C_1^{n_1}p_1C_2^{n_2}p_2 \cdots C_k^{n_k}p_kw^{n_0}p_{k+1}^*(C_{k+1}^*)^{n_{k+1}} \cdots p_{k+m}^* (C_{k+m}^*)^{n_{k+m}}$$  $$=C_1^{n_1}p_1C_2^{n_2}p_2 \cdots C_k^{n_k}p_kp_{k+1}^*(C_{k+1}^*)^{n_{k+1}} \cdots p_{k+m}^* (C_{k+m}^*)^{n_{k+m}}$$ where $n_0+n_1+\cdots +n_{k+m}=n-(k+m)$.
 These are distinct elements of the  basis $\mathcal{B}$ and there are $n \choose k+m$ of them for $n$ large (this counting problem is equivalent to counting the number of ways of placing $n-(k+m)$ identical coins into $k+m+1$ distinct pockets). Hence $dim (W^n)$ has a lower bound which is a polynomial of degree $k+m$ in n. Thus  $GKdim L_{\mathbb{F}}(\Gamma) \geq k+m$. If $ht(\Gamma)=2k$  then we can find distinct cycles $C_1 \leadsto C_2 \leadsto \cdots \leadsto C_k \leadsto w$ where $w$ is a sink. Setting  $C_{k+1}:=C_k$,  $C_{k+2}:=C_{k-1}$, $\cdots C_{2k} =C_1$, we get $GKdim L_{\mathbb{F}}(\Gamma) \geq 2k=ht(\Gamma)$.\\\\                                               
To show that $ht(\Gamma) \geq GKdim L_{\mathbb{F}}(\Gamma) $, we note that an arbitrary element of $W^n$ for $n$ large is in the span of elements of the form: $$pq^*=p_0C_1^{n_1}p_1C_2^{n_2} \cdots p_{k-1}C_k^{n_k}p_{k}p_{k+1}^*(C_{k+1}^*)^{n_{k+1}} p_{k+2}^* \cdots (C_{k+m}^*)^{n_{k+m}}p_{k+m+1}^*$$ 
where $n_1+n_2+ \cdots + n_{k+m} \leq n$ and $p_0, p_1, \cdots p_{k+m+1}$ do not contain any cycles.\\

For the chains of the distinct cycles $C_1 \leadsto C_2 \leadsto \cdots \leadsto C_k$ and $ C_{k+m} \leadsto \cdots \leadsto C_{k+1}$ there are finitely many choices for each $p_i$ where $0\leq i \leq k+m+1$, the number of these choices depends on $k+m$, but not on $n$. Fixing $p_0, p_1, \cdots , p_{k+m+1}$, the number of possibilities for $n_1, n_2, \cdots n_{k+m}$ is bounded by a polynomial in $n$ of degree $k+m$ (counting the number of ways of placing at most $n$ identical coins into $k+m$ distinct pockets). Since $k+m \leq ht(\Gamma)$ we have a polynomial upper bound which is the sum of the polynomials of degree $\leq ht(\Gamma)$ corresponding to the choice of $p_0, p_1,\cdots, p_{k+m+1}$. (The bound on the number of these polynomials does not depend on $n$.) Therefore $GKdim L_{\mathbb{F}}(\Gamma) = ht(\Gamma)$.
\end{proof}\\

Some early results on Leavitt path algebras of finite digraphs are easy consequences of Theorem \ref{height}.  $ GKdim L_{\mathbb{F}}(\Gamma)=0=ht(\Gamma)$ if and only if $\Gamma$ is acyclic. Hence $L_{\mathbb{F}}(\Gamma)$ is finite dimensional if and only if $\Gamma$ is acyclic \cite{aam07}, since the Gelfand-Kirillov dimension of any algebra $A$ is 0 if and only if $ dim^{\mathbb{F}}(A)$ is finite. \\

$GKdim L_{\mathbb{F}}(\Gamma)=1 $ if and only if $ \Gamma$ has at least one cycle but cycles have no exits, that is, $ht(\Gamma)=1$. In this case $L_{\mathbb{F}}(\Gamma)$ is a direct sum of matrix algebras over $\mathbb{F} [x^{-1},x] \> \>$ and possibly $\mathbb{F}$ \cite{aam08}. This is also a consequence of Theorem \ref{basis} above. Hence representations of  $L_{\mathbb{F}}(\Gamma)$ with $GKdim L_{\mathbb{F}}(\Gamma)\leq 1$ are well understood.
Little is known beyond the classification of simple $L_{\mathbb{F}}(\Gamma)$-modules when $ GKdim L_{\mathbb{F}}(\Gamma)\geq 2$. These will be constructed explicitly in the next section. 

\begin{examples} \label{ornek}
The graph $C^*$-algebras of the following digraphs (which are the completions of Leavitt path algebras with complex coefficients) are quantum disks, quantum spheres and quantum real projective spaces \cite{hs02}. The graph $C^*$-algebra of $qD^2$, the quantum 2-disk is also the Toeplitz algebra, the Leavitt path algebra of this digraph is isomorphic to the Jacobson algebra $\mathbb{F} \langle x,y \rangle / (1-xy)$. \\\\

$$\textit{Toeplitz  or }\>  qD^2 : \quad \xymatrix{ {\bullet} \ar@(ul,ur)
 \ar@{->}[r]  & \bullet  } $$
 
  $$\qquad \quad \qquad qD^{2n} : \qquad \xymatrix{ {\bullet}_1 \ar@(ul,ur)
 \ar@{->}[r]   &{\bullet}_2 \ar@(ul,ur)
 \ar@{->}[r]    & {\bullet}_3 \ar@(ul,ur)\ar@{->}[r] &\cdots \ar@{->}[r] & \bullet_n \ar@(ul,ur) \ar@{->}[r] 
 &\bullet }$$
 
$$\qquad qS^{2n-1} : \quad \xymatrix{ {\bullet}_1 \ar@(ul,ur) \ar@{->}[r]   &{\bullet}_2 \ar@(ul,ur)  \ar@{->}[r]    & {\bullet}_3 \ar@(ul,ur)\ar@{->}[r] &\cdots \ar@{->}[r] & \bullet_n \ar@(ul,ur) }$$
 
 $$\qquad \quad \qquad qS^{2n} : \qquad \xymatrix{ {\bullet}_1 \ar@(ul,ur)
 \ar@{->}[r]   &{\bullet}_2 \ar@(ul,ur)
 \ar@{->}[r]    & {\bullet}_3 \ar@(ul,ur)\ar@{->}[r] &\cdots \ar@{->}[r] & \bullet_n \ar@(ul,ur) \ar@{->}[r] \ar@{->}[dr] &\bullet \\
               &&& & & \bullet }$$
               
                  $$\qquad q\mathbb{R}P^{2n} : \quad \xymatrix{ {\bullet}_1 \ar@(ul,ur)
 \ar@{->}[r]   &{\bullet}_2 \ar@(ul,ur)
 \ar@{->}[r]    & {\bullet}_3 \ar@(ul,ur)\ar@{->}[r] &\cdots \ar@{->}[r] & \bullet_n \ar@(ul,ur) \ar@{->}[r]  \ar@/^/[r]  &\bullet }$$
 
\noindent
 Note that $ GKdim L_{\mathbb{F}}(\Gamma)=ht(\Gamma)$ is the dimension of the quantum space for these digraphs.
             \end{examples}

\begin{theorem} \label{k-height}
If $\Gamma$ is a finite digraph, $\mathbb{F}$ is a field and ${\bf k}$ is a commutative $\mathbb{F}$-algebra with 1 then
$$ GKdim L_{{\bf k}}(\Gamma)= GKdim L_{\mathbb{F}}(\Gamma) + GKdim ({\bf k}). $$
\end{theorem}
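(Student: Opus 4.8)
The plan is to combine the graph-theoretic description of $GKdim\, L_{\mathbb F}(\Gamma)$ from Theorem~\ref{height} with the explicit ${\bf k}$-basis $\mathcal B$ of Theorem~\ref{basis}, which is simultaneously a basis of $L_{\mathbb F}(\Gamma)$ and a ``defining set'' for $L_{\bf k}(\Gamma)$ in the sense that $L_{\bf k}(\Gamma) = L_{\mathbb F}(\Gamma) \otimes_{\mathbb F} {\bf k}$ as ${\bf k}$-algebras (the multiplication of the basis elements $pq^*$ is governed by purely combinatorial rules that do not involve the coefficients). First I would record this tensor-decomposition: fixing the generating ${\bf k}$-subspace of $L_{\bf k}(\Gamma)$ to be $W_{\bf k} = W \otimes_{\mathbb F} W'$ where $W$ is the finite-dimensional generating subspace of $L_{\mathbb F}(\Gamma)$ used in the proof of Theorem~\ref{height} (the span of the cycles, their duals, cycle-free paths and their duals, including $1$) and $W'$ is a finite-dimensional generating subspace of ${\bf k}$ containing $1$, one gets $W_{\bf k}^n \supseteq W^n \otimes_{\mathbb F} (W')^n$ and in fact a chain of inclusions that lets one compare $\dim_{\mathbb F} (W_{\bf k}^n)$ with $\dim_{\mathbb F}(W^n) \cdot \dim_{\mathbb F}((W')^n)$ up to the usual slack absorbed by the $\limsup$.

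The inequality $GKdim\, L_{\bf k}(\Gamma) \ge GKdim\, L_{\mathbb F}(\Gamma) + GKdim({\bf k})$ is the easier direction: the lower bounds for $GKdim\, L_{\mathbb F}(\Gamma)$ in Theorem~\ref{height} are realized by explicitly exhibited families of $\binom{n+1}{k+m}$ (resp.\ $\binom{n}{k+m}$) distinct elements of $\mathcal B$ lying in $W^n$; tensoring each such element with a family of $\sim n^{GKdim({\bf k})}$ linearly independent elements of $(W')^n$ produces, by linear independence of $\mathcal B$ over ${\bf k}$ (Theorem~\ref{basis}), a family of size $\gtrsim n^{ht(\Gamma) + GKdim({\bf k})}$ inside $W_{\bf k}^{2n}$ (or $W_{\bf k}^{cn}$ for a fixed constant $c$), which forces the stated lower bound on $GKdim\, L_{\bf k}(\Gamma)$. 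For the reverse inequality I would mimic the upper-bound argument in Theorem~\ref{height}: an arbitrary element of $W_{\bf k}^n$ is a ${\bf k}$-combination (with coefficients in $(W')^n$) of the finitely-many-choices-of-$p_i$ times the monomials $C_1^{n_1}p_1\cdots (C_{k+m}^*)^{n_{k+m}}p^*$ with $\sum n_i \le n$ and $k+m \le ht(\Gamma)$; the number of such basis monomials is $O(n^{ht(\Gamma)})$ and each coefficient ranges over $(W')^n$, so $\dim_{\mathbb F}(W_{\bf k}^n) \le O(n^{ht(\Gamma)}) \cdot \dim_{\mathbb F}((W')^{n})$, giving $GKdim\, L_{\bf k}(\Gamma) \le ht(\Gamma) + GKdim({\bf k}) = GKdim\, L_{\mathbb F}(\Gamma) + GKdim({\bf k})$.

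The main obstacle is not conceptual but bookkeeping: one must be careful that ${\bf k}$ need not be finitely generated as an $\mathbb F$-algebra, so $GKdim({\bf k})$ must be handled via the $\limsup$ over \emph{all} finite-dimensional generating subspaces, and one must check the two directions are not sensitive to the choice of $W'$ — i.e.\ that the tensor product of generating subspaces is a generating subspace of $L_{\bf k}(\Gamma)$ and that the $\limsup$ of a product is controlled by the product of the $\limsup$s (the inequality $GKdim(A\otimes_{\mathbb F} B) \le GKdim(A)+GKdim(B)$ is standard, and here the basis $\mathcal B$ gives the matching lower bound because it exhibits $L_{\bf k}(\Gamma)$ as literally $L_{\mathbb F}(\Gamma)\otimes_{\mathbb F}{\bf k}$ with a basis that is a product of an $\mathbb F$-basis of $L_{\mathbb F}(\Gamma)$ and any ${\bf k}$-spanning considerations). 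A secondary subtlety is the case $GKdim\,L_{\mathbb F}(\Gamma) = \infty$, i.e.\ $\Gamma$ has intersecting cycles: then already $L_{\mathbb F}(\Gamma)$ has exponential growth, hence so does $L_{\bf k}(\Gamma) \supseteq L_{\mathbb F}(\Gamma)$, and both sides are $+\infty$ — so one may as well assume from the start, via Theorem~\ref{aajz12}, that the cycles of $\Gamma$ are pairwise disjoint, which is exactly the hypothesis under which $\mathcal B$ is available.
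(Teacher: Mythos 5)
Your proposal is correct and follows essentially the same route as the paper: identify $L_{\bf k}(\Gamma)\cong {\bf k}\otimes_{\mathbb F}L_{\mathbb F}(\Gamma)$, take the generating subspace $U\otimes W$ with $W$ as in Theorem~\ref{height}, and use $\dim_{\mathbb F}(U\otimes W)^n=(\dim_{\mathbb F}U^n)(\dim_{\mathbb F}W^n)$ together with the fact that $\frac{\log\dim W^n}{\log n}$ actually converges to $ht(\Gamma)$ (which is what lets the $\limsup$ of the product split into the sum, the point you flag as the main bookkeeping concern). The only cosmetic difference is that you re-run the lower/upper bound counts of Theorem~\ref{height} separately, whereas the paper cites them and multiplies dimensions directly.
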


\begin{proof}
If $GKdim L_{\mathbb{F}}(\Gamma)$ or $GKdim ({\bf k})$ is infinite then $GKdim L_{\bf k}(\Gamma)$ is also infinite since the former are subalgebras of the latter. When both $GKdim L_{\mathbb{F}}(\Gamma)$ and $GKdim ({\bf k})$ are  finite, let $U$ be a finite dimensional generating $\mathbb{F}$-subspace of ${\bf k}$ containing 1 and let $W$ be the span of all the cycles of $\Gamma$, their duals, all paths not containing any cycles and their duals as in the proof of Theorem \ref{height}. We will use the generating subspace $U\otimes W$ of ${\bf k} \otimes_{\mathbb{F}} L_{\mathbb{F}}(\Gamma) \cong L_{{\bf k}}(\Gamma)$ which contains $1\otimes 1=1 $, to compute $GKdim L_{{\bf k}}(\Gamma)$. Now $dim^{\mathbb{F}} (U\otimes W)^n= (dim^{\mathbb{F}} U^n)( dim^{\mathbb{F}} W^n)$. Also $\limsup_{n \to \infty} \frac{log (dim(U^n))}{ log (n)}= GKdim ({\bf k})$ and, as shown in the proof of Theorem \ref{height}, $\lim_{n \to \infty} \frac{log (dim(W^n))}{ log (n)}=ht(\Gamma)$ yielding that $ GKdim L_{{\bf k}}(\Gamma)= ht(\Gamma) + GKdim ({\bf k}) =GKdim L_{\mathbb{F}}(\Gamma)+ GKdim ({\bf k})$.
\end{proof}\\

\begin{remark} If $\mathbb{F}$ is a field, ${\bf k}$ is a commutative $\mathbb{F}$-algebra with 1 and $\Gamma$ is a finite digraph whose cycles are pairwise disjoint then
 $GKdim \> {\bf k}\Gamma \>= \lceil ht(\Gamma)/2 \rceil+ GKdim ({\bf k})$. The proof is similar to those of Theorem \ref{height} and Theorem \ref{k-height} but considerably easier since there are no dual paths and the set of paths is already a basis for the path algebra.
\end{remark}

 \newpage 
 \medskip
\section{Simple $L(\Gamma)$-modules and Extensions}
\medskip

\subsection{Simple $L(\Gamma)$-modules }

We want to give an explicit description of all simple $L_{{\bf k}}(\Gamma)$-modules up to isomorphism when $\Gamma$ is a finite digraph whose cycles are pairwise disjoint.
The following Lemma is stated and proven in greater generality than we need, since it may be of independent interest.\\

An \textit{exclusive cycle} is a cycle which does not intersect any other cycle in $\Gamma$.\\

\begin{lemma} \label{Preclass} Let $C$ be an exclusive  cycle in $\Gamma$,  $\> f(x)=1-g(x)$ with $0\neq g(x) \in x{\bf k}[x]$ and  $M=vL_{{\bf k}}(\Gamma)/f(C^*)L_{{\bf k}}(\Gamma)$ where $v:=sC$ and $f(C^*):=v-g(C^*)$. Then\\

(i) $V_M = V_{\leadsto C}$\\

(ii) $vL_{{\bf k}}(\Gamma) / f(C^*)L_{{\bf k}}(\Gamma) \cong vL_{{\bf k}}(\Gamma_{\leadsto C})/f(C^*)L_{{\bf k}}(\Gamma_{\leadsto C})$
 as $L_{{\bf k}}(\Gamma)$-modules.

\end{lemma}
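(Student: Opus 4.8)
The plan is to prove (i) and (ii) together, using (ii) essentially as the tool to reduce to the subgraph $\Gamma_{\leadsto C}$ and then reading off the support. First I would observe that $V_{\leadsto C}$ is a hereditary subset of $V$: if $v \leadsto C$ and $v \leadsto u$ then $u \leadsto C$ by transitivity of $\leadsto$, so the full subgraph $\Gamma_{\leadsto C}$ is closed under successors. The complement $H := V \setminus V_{\leadsto C}$ is therefore hereditary; moreover $C$ being an exclusive cycle guarantees no vertex of $H$ reaches $C$, and one checks $H$ is saturated (if every arrow out of a non-sink $v$ lands in $H$, then no path from $v$ can reach $C$, so $v \in H$). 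Thus $H$ is hereditary saturated, $\Gamma_{/H} = \Gamma_{\leadsto C}$, and Proposition \ref{hereditary}(ii) gives $L_{\bf k}(\Gamma_{\leadsto C}) \cong L_{\bf k}(\Gamma)/(H)$ as $\mathbb{Z}$-graded $*$-algebras, together with the crucial statement $vL_{\bf k}(\Gamma_{\leadsto C}) \cong vL_{\bf k}(\Gamma)/v(H)$ for $v \notin H$ (and $v = sC \notin H$ since $C \leadsto C$).

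For (ii), I would quotient the isomorphism $vL_{\bf k}(\Gamma_{\leadsto C}) \cong vL_{\bf k}(\Gamma)/v(H)$ by the image of the right ideal generated by $f(C^*) = v - g(C^*)$. The key point is that $f(C^*)L_{\bf k}(\Gamma)$ maps onto $f(C^*)L_{\bf k}(\Gamma_{\leadsto C})$ under the quotient map $\pi: L_{\bf k}(\Gamma) \to L_{\bf k}(\Gamma_{\leadsto C})$, and since $C$ lies entirely in $\Gamma_{\leadsto C}$, the element $f(C^*)$ is preserved. So it remains to check that the submodule $v(H) \subseteq vL_{\bf k}(\Gamma)$ is already contained in $f(C^*)L_{\bf k}(\Gamma)$, equivalently that passing to $L_{\bf k}(\Gamma_{\leadsto C})$ kills nothing extra: since $f(C^*) \equiv v \pmod{(H)}$ is false in general, I should instead argue directly that the composite $vL_{\bf k}(\Gamma) \twoheadrightarrow vL_{\bf k}(\Gamma_{\leadsto C}) \twoheadrightarrow vL_{\bf k}(\Gamma_{\leadsto C})/f(C^*)L_{\bf k}(\Gamma_{\leadsto C})$ has kernel exactly $f(C^*)L_{\bf k}(\Gamma)$. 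One inclusion is clear ($f(C^*)L_{\bf k}(\Gamma)$ is in the kernel). For the reverse, take $x \in vL_{\bf k}(\Gamma)$ with $\pi(x) \in f(C^*)L_{\bf k}(\Gamma_{\leadsto C})$; using the basis $v\mathcal{B}$ from Remark \ref{baz2} and the relation $f(C^*)C^n \equiv C^n - g(C^*)C^n$, one reduces modulo $f(C^*)L_{\bf k}(\Gamma)$ any term $C^n q^*$ with $q \in P_v$ to a combination of terms $C^m q^*$ with $m \leq 0$ (or with bounded exponent), and separately shows every basis element $pq^* \in v\mathcal{B}$ with $p$ of positive length, or any term supported on a vertex not reaching $C$, lies in $v(H) \subseteq \ker \pi$; the fact that $(H)$ and $f(C^*)L_{\bf k}(\Gamma)$ have the same intersection with ${\bf k}\Gamma$-type terms is then pinned down via Proposition \ref{bire-bir}.

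Having (ii), part (i) becomes a computation inside $\Gamma_{\leadsto C}$: by construction every vertex of $\Gamma_{\leadsto C}$ is a predecessor of $C$, so $V_{\leadsto C}$ is exactly the vertex set of $\Gamma_{\leadsto C}$, and I must show $Mw \neq 0$ for every such $w$, i.e. $(vL_{\bf k}(\Gamma_{\leadsto C})/f(C^*)L_{\bf k}(\Gamma_{\leadsto C}))w \neq 0$. For this I would pick, for each $w \in V_{\leadsto C}$, a path $p$ from $w$ to $v = sC$ (exists since $w \leadsto C$ and $C$ is a single vertex's worth away), and use Proposition \ref{pp^*}(i): right multiplication by $p^*$ is injective on the relevant ${\bf k}$-modules, and $p^* \in vL_{\bf k}(\Gamma_{\leadsto C})w$ is not in $f(C^*)L_{\bf k}(\Gamma_{\leadsto C})$ because in the basis expansion (Theorem \ref{basis}, Corollary \ref{corner}) the element $f(C^*)L_{\bf k}(\Gamma_{\leadsto C})$ consists of combinations $\sum (v - g(C^*))C^{n_i}q_i^*$ whose lowest-degree term in each $q_i^*$-component is $C^{n_i}q_i^*$ itself, so it cannot equal the single basis element $p^*$ (whose $C$-exponent is $0$ but which is not annihilated). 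Conversely $Mw = 0$ for $w \notin V_{\leadsto C}$ is immediate from (ii) since such $w$ is not even a vertex of $\Gamma_{\leadsto C}$.

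The main obstacle I anticipate is the kernel computation in part (ii): showing that quotienting $vL_{\bf k}(\Gamma)$ by $f(C^*)L_{\bf k}(\Gamma)$ and then restricting to $\Gamma_{\leadsto C}$ gives the same thing as restricting first and then quotienting — i.e. that $(H) \cap f(C^*)L_{\bf k}(\Gamma)$-bookkeeping works out — requires carefully tracking how the exit-free relations and the basis $\mathcal{B}$ interact with the ideal $(H)$, and this is where $C$ being \emph{exclusive} (so $V_{\leadsto C}$ is genuinely hereditary and the cycle survives intact in $\Gamma_{\leadsto C}$) is doing the real work. Everything else is either a direct application of Proposition \ref{hereditary}, Proposition \ref{bire-bir}, Proposition \ref{pp^*}, or a bounded-degree argument with the explicit basis.
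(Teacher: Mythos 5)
Your overall strategy (establish the isomorphism (ii) first and then read off the support) is a legitimate alternative ordering, but as written it has a genuine gap at exactly the point where all the work of this lemma lives. Writing $I:=(H)$ for the ideal generated by $H=V\setminus V_{\leadsto C}$, the kernel of your composite $vL_{\bf k}(\Gamma)\twoheadrightarrow vL_{\bf k}(\Gamma_{\leadsto C})\twoheadrightarrow vL_{\bf k}(\Gamma_{\leadsto C})/f(C^*)L_{\bf k}(\Gamma_{\leadsto C})$ is $f(C^*)L_{\bf k}(\Gamma)+vI$, so (ii) is equivalent to the containment $vI\subseteq f(C^*)L_{\bf k}(\Gamma)$. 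You correctly isolate this (``it remains to check that $v(H)$ is already contained in $f(C^*)L_{\bf k}(\Gamma)$'') but then never prove it: the sentences that follow only show that various terms lie in $\ker\pi=vI$, which is the wrong direction, and Proposition \ref{bire-bir} compares right ideals by intersecting with ${\bf k}\Gamma$, which does not obviously help since you would still have to compute $f(C^*)L_{\bf k}(\Gamma)\cap {\bf k}\Gamma$. The missing ingredient is the paper's key computation: for $u\notin V_{\leadsto C}$, a spanning element $pq^*$ of $vL_{\bf k}(\Gamma)u$ has $tp\notin V_{\leadsto C}$, hence $(C^*)^np=0$ for $n>l(p)$ by Fact \ref{fact}(i), and therefore $f(C^*)\bigl[v+\sum_{k=1}^{n-1}g(C^*)^k\bigr]pq^*=(v-g(C^*)^n)pq^*=pq^*$. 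Thus $f(C^*)$ admits a truncated geometric-series inverse on each such component, so $vL_{\bf k}(\Gamma)u\subseteq f(C^*)L_{\bf k}(\Gamma)$ and hence $vI\subseteq f(C^*)L_{\bf k}(\Gamma)$. This is precisely the paper's proof of part (i) (that $Mu=0$ for $u\notin V_{\leadsto C}$), which the paper then feeds into (ii); your reordering does not avoid that computation, it only obscures where it is needed.

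Two smaller points. First, your justification that $H$ is hereditary is backwards: from $v\leadsto C$ and $v\leadsto u$ one cannot conclude $u\leadsto C$ (take $u$ a sink off $C$), so $V_{\leadsto C}$ is not closed under successors; it is closed under \emph{predecessors}, and it is the complement $H$ that is hereditary because $u\not\leadsto C$ together with $u\leadsto w$ forces $w\not\leadsto C$. The conclusion you need is true, but not for the reason you give. Second, your argument that $p^*\notin f(C^*)L_{\bf k}(\Gamma_{\leadsto C})$ leans on the decomposition of Corollary \ref{corner}, which requires $C$ to have no exit in $\Gamma_{\leadsto C}$; this does follow from exclusivity (an exit of $C$ surviving into $\Gamma_{\leadsto C}$ would create a second cycle through a vertex of $C$), but it must be said, and even then the ``lowest-degree term'' argument tacitly assumes no cancellation of coefficients, which is fine over a field but delicate over an arbitrary ${\bf k}$. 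The paper's route --- prove $Mu=0$ off $V_{\leadsto C}$ first, then deduce $Mu\neq 0$ for $u\in V_{\leadsto C}$ from the injectivity of $\cdot q^*$ in Proposition \ref{pp^*}(i) --- is shorter and avoids this basis bookkeeping entirely.
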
 
\begin{proof}Let $L\Gamma$ denote $L_{{\bf k}}(\Gamma)$. (i) For all $u \in V$ the sequence 
$vL\Gamma u \stackrel{f(C^*)\cdot }{\longrightarrow}vL\Gamma u \longrightarrow Mu \longrightarrow 0$ is exact. $vL\Gamma u $ is spanned by $\{pq^*\vert \> sp=v, \> \>tp=tq,\> \>  sq=u\}$. If $u\notin V_{\leadsto C}$ and $u \stackrel{q}{\leadsto} tq=tp$ then $tp \notin V_{\leadsto C}$. Hence $(C^n)^*p=(C^*)^np=0$ for $n=l(p)+1$ because $C^n$ is too long to be an initial segment of $p$ and $p$ can not be an initial segment of $C^n$ since $tp \notin V_{\leadsto C}$. Now $f(C^*)[v +\Sigma_{k=1}^{n-1}g(C^*)^k]pq^*=(v-g(C^*)^n)pq^*=vpq^*=pq^*$.
Hence $vL\Gamma u \stackrel{f(C^*)\cdot }{\longrightarrow}vL\Gamma u$ is onto and thus $Mu=0$. 
That is $V_M\subseteq V_{\leadsto C}$. If $u\in V_{\leadsto C}$ then there is a path $q$  such that $sq=u$ and $tq=v$. Now, 
$Mv \stackrel{\cdot q^*}{\longrightarrow} Mu$ is one-to-one, hence $Mu\neq 0$, that is, $V_{\leadsto C} = V_M$.\\

(ii) Let $I$ be the ideal generated by 
$V\setminus V_{\leadsto C}$. Since $I$ annihilates $M$, we get that $M=vL\Gamma /(vI+f(C^*)L\Gamma)$, that is, $vI\subseteq f(C^*)L\Gamma$. Consider the composition $vL\Gamma_{\leadsto C} \stackrel{\backsimeq\quad }{\longrightarrow} vL\Gamma / vI \longrightarrow M$ where the first isomorphism is the restriction of the isomorphism $L\Gamma_{\leadsto C} \longrightarrow L\Gamma /I$ restricted to $vL\Gamma_{\leadsto C}$. This composition is an epimorphism because both homomorphisms are onto. If $\alpha \in vL\Gamma_{\leadsto C}$ is in the kernel of this composition then its image is $f(C^*)\beta $ for some $\beta$ in $L\Gamma$. We may assume that $\beta $ is in $vL\Gamma$, replacing $\beta$ with $v\beta$ if necessary, since $f(C^*)=f(C^*)v$. Now $\beta =\gamma +\delta$ where $\gamma$ involves only paths in $\Gamma_{\leadsto C}$ and $\delta$ is in $ vI$. Let $\epsilon$ be the element in $vL\Gamma_{\leadsto C}$ which has the same expression as $\gamma$ in $vL\Gamma$. The image of $f(C^*)\epsilon$ is the same as the image of $\alpha$ in $vL\Gamma /vI$. Hence $\alpha = f(C^*)\epsilon$, so $f(C^*) \in vL\Gamma_{\leadsto C}$ generates the kernel of the epimorphism  $vL\Gamma_{\leadsto C} \longrightarrow M$. Thus $vL\Gamma_{\leadsto C} / f(C^*)L\Gamma_{\leadsto C} \cong M $.
\end{proof}

\begin{theorem} \label{Classification 1}
Let $\Gamma$ be a finite digraph whose cycles are pairwise disjoint and let $M$ be a simple $L_{\bf k}(\Gamma)$-module. \\
(i) If there is a sink $w$ in $\Gamma$ with $Mw\neq 0$ then $V_M=V_{\leadsto w}$ and   $$M\cong  \frac{wL_{\bf k}(\Gamma)}{\mathfrak{m}wL_{\bf k}(\Gamma)}\cong  wL_{^{{\bf k}}\!/\!_{\mathfrak{m}}}(\Gamma)$$
for a unique maximal ideal $\mathfrak{m} $ of ${\bf k}$. \\

Conversely, $wL_{^{{\bf k}}\!/\!_{\mathfrak{m}}}(\Gamma)$ is a simple $L_{\bf k}(\Gamma)$-module for each maximal ideal $\mathfrak{m}$ of ${\bf k}$ and each sink $w$ in $\Gamma$. This gives a one-to-one correspondence between isomorphism classes of simple $L_{\bf k}(\Gamma)$-modules containing a sink in their support and $$\{ \mathfrak{m}\vartriangleleft {\bf k} \>  \vert \> \mathfrak{m} \textit{ maximal } \} \times \{ w \in V \> \vert \> w \textit{ a sink }\}$$ 
i.e., the  cartesian product of the maximal spectrum of ${\bf k}$ with the set of sinks in $\Gamma$.\\

\noindent
(ii) If $Mw=0$ for every sink $w$ in $\Gamma$ then there is a unique cycle $C$ in $\Gamma_M =\Gamma_{\leadsto C}$ with $v:=sC$ and 

$$M\cong \frac{{\bf k}[x,x^{-1}]}{\mathfrak{m}} \bigotimes_{{\bf k}[x,x^{-1}]} vL_{\bf k}(\Gamma_{\leadsto C})$$ 
 for a unique maximal ideal $\mathfrak{m} $ of ${\bf k}[x,x^{-1}]$. Here $vL_{\bf k}(\Gamma_{\leadsto C})$ is a ${\bf k}[x,x^{-1}]$-module via  ${\bf k}[x,x^{-1}] \cong vL_{\bf k}(\Gamma_{\leadsto C})v$ by Proposition \ref{Path}(ii) and  $vL_{\bf k}(\Gamma_{\leadsto C})$ is a left $L_{\bf k}(\Gamma)$-module since it is isomorphic to $vL_{\bf k}(\Gamma)/ vI$ where $I=(V\setminus V_{\leadsto C})$ by Proposition \ref{hereditary}. \\
 
 Conversely, $({\bf k}[x,x^{-1}] \diagup \mathfrak{m}) \bigotimes_{{\bf k}[x,x^{-1}]} vL_{\bf k}(\Gamma_{\leadsto C})$ is a simple $L_{\bf k}(\Gamma)$-module for each maximal ideal $\mathfrak{m}$ of ${\bf k}[x,x^{-1}]$ and each cycle $C$ in $\Gamma$. This gives a one-to-one correspondence between isomorphism classes of simple $L_{\bf k}(\Gamma)$-modules not containing a sink in their support and $$\{ \mathfrak{m}\vartriangleleft {\bf k}[x,x^{-1}]  \> \vert \>  \mathfrak{m} \textit{ maximal } \} \times \{ C  \> \vert \> C \textit{ a cycle in } \Gamma \}$$ 
i.e., the  cartesian product of the maximal spectrum of ${\bf k}[x,x^{-1}]$ with the set of cycles in $\Gamma$.

\end{theorem}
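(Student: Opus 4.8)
The plan is to handle the two cases through a common first move: replace $\Gamma$ by the support subgraph $\Gamma_M$. Since $M$ is simple it is nonzero and unital, so $V_M\neq\emptyset$, and $\mathrm{Ann}(M)$ is a two-sided ideal of $L_{\bf k}(\Gamma)$. Applying Proposition \ref{hereditary}(i) with $\lambda=1$ to $\mathrm{Ann}(M)$ shows that $H:=V\setminus V_M=\{v\in V: v\in\mathrm{Ann}(M)\}$ is hereditary and saturated; hence $(H)\subseteq\mathrm{Ann}(M)$ and, by Proposition \ref{hereditary}(ii), $M$ is a simple module over $L_{\bf k}(\Gamma_{/H})=L_{\bf k}(\Gamma_M)$ whose support is all of $V(\Gamma_M)$. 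In case (ii) condition $(I)$ of Proposition \ref{teorem} then forces $\Gamma_M$ to have no sinks: a sink of $\Gamma$ lies outside $V_M$ by hypothesis, while a non-sink $v$ of $\Gamma$ with $Mv\neq 0$ has $\bigoplus_{se=v}M(te)\cong Mv\neq 0$, so some $te\in V_M$, i.e. $v$ is not a sink of $\Gamma_M$ either. Thus in case (ii) $\Gamma_M$ is a finite digraph with pairwise disjoint cycles and no sinks, hence contains a cycle.

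Case (i). If $Mw\neq 0$ for a sink $w$, then $MwL\supseteq Mw\neq 0$ is a submodule, so $M=MwL$ and $M$ lies in the Serre subcategory $\mathcal{S}_{w}$ of Theorem \ref{Ser2}. Since $wLw\cong{\bf k}$ by Proposition \ref{pp^*}(iii), the equivalence $\mathcal{S}_{w}\simeq\textbf{Mod}_{\bf k}$ together with the last assertion of Theorem \ref{Ser2} identifies "$M$ simple" with "$Mw$ a simple ${\bf k}$-module", so $Mw\cong{\bf k}/\mathfrak{m}$ for a unique maximal ideal $\mathfrak{m}=\mathrm{Ann}_{\bf k}(Mw)$. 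Lemma \ref{Temel}(i) gives $M\cong Mw\otimes_{wLw}wL\cong({\bf k}/\mathfrak{m})\otimes_{\bf k}wL=wL/\mathfrak{m}wL$, and since the Leavitt relations have integer coefficients there is a base-change isomorphism $L_{\bf k}(\Gamma)/\mathfrak{m}L_{\bf k}(\Gamma)\cong L_{^{{\bf k}}\!/\!_{\mathfrak{m}}}(\Gamma)$ restricting to $wL/\mathfrak{m}wL\cong wL_{^{{\bf k}}\!/\!_{\mathfrak{m}}}(\Gamma)$. The decomposition $M=\bigoplus_{q\in P_w}Mwq^*$ of Lemma \ref{Temel}(i), combined with the injectivity of right multiplication by $q^*$ (Proposition \ref{pp^*}(i)), yields $V_M=\{sq: q\in P_w\}=V_{\leadsto w}$. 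For the converse, $wL_{^{{\bf k}}\!/\!_{\mathfrak{m}}}(\Gamma)$ is simple over the field ${\bf k}/\mathfrak{m}$ by Proposition \ref{Path}(iii), hence simple over $L_{\bf k}(\Gamma)$ (which acts through the quotient), its support $V_{\leadsto w}$ contains the sink $w$, and the correspondence is a bijection because the support determines $w$ (two distinct sinks cannot both lie in some $V_{\leadsto u}$) and $Mw$ determines $\mathfrak{m}$.

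Case (ii). Choose a cycle $C$ of $\Gamma_M$ that is maximal for $\leadsto$ and put $v:=sC$. The central graph-theoretic point is that $C$ has no exit in $\Gamma_M$: if $e$ were an exit, then, $\Gamma_M$ having no sinks, following $e$ forward we reach some cycle of $\Gamma_M$, which by maximality of $C$ must be $C$ again; splicing this path with an arc of $C$ produces a closed walk through $sC$ that uses the edge $e\notin C$, and taking the portion of this walk from $sC$ to its first return to $C$ (made simple) exhibits a cycle $\neq C$ sharing a vertex with $C$, contradicting that the cycles of $\Gamma$ are pairwise disjoint. With $C$ exit-free in $\Gamma_M$, Proposition \ref{Path}(ii) gives $vL_{\bf k}(\Gamma_M)v\cong{\bf k}[x,x^{-1}]$; since $Mv\neq 0$ and $M$ is simple, $M=MvL$, so $M\in\mathcal{S}_{v}$ and Theorem \ref{Ser2} makes $Mv$ a simple ${\bf k}[x,x^{-1}]$-module, i.e. $Mv\cong{\bf k}[x,x^{-1}]/\mathfrak{m}$ for a unique maximal $\mathfrak{m}$, while Lemma \ref{Temel}(ii) gives $M\cong({\bf k}[x,x^{-1}]/\mathfrak{m})\otimes_{{\bf k}[x,x^{-1}]}vL_{\bf k}(\Gamma_M)$. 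Again $M=\bigoplus_{q\in P_C}Mvq^*$ and the injectivity of $\cdot q^*$ give $V_M=V_{\leadsto C}$, so $\Gamma_M=\Gamma_{\leadsto C}$ and the isomorphism takes the stated form (with the ${\bf k}[x,x^{-1}]$- and $L_{\bf k}(\Gamma)$-module structures on $vL_{\bf k}(\Gamma_{\leadsto C})$ those described via Proposition \ref{Path}(ii) and Proposition \ref{hereditary}). Uniqueness of such a $C$ follows since $V_{\leadsto C}=V_{\leadsto C'}$ forces $sC\sim sC'$, and for disjoint $C\neq C'$ a closed walk through $sC$ and $sC'$ again yields a cycle meeting $C$ and distinct from it. The converse runs the same machinery inside $\Gamma_{\leadsto C}$, where $C$ is automatically exit-free by the same first-return argument (an exit of $C$ in $\Gamma_{\leadsto C}$ has target $\leadsto sC$): the module $({\bf k}[x,x^{-1}]/\mathfrak{m})\otimes_{{\bf k}[x,x^{-1}]}vL_{\bf k}(\Gamma_{\leadsto C})$ is the image of the simple ${\bf k}[x,x^{-1}]$-module under the equivalence of Theorem \ref{Ser2}, hence simple over $L_{\bf k}(\Gamma_{\leadsto C})$ and so over $L_{\bf k}(\Gamma)$; its support $V_{\leadsto C}$ contains no sink of $\Gamma$; distinct pairs $(\mathfrak{m},C)$ give non-isomorphic modules (the support recovers $C$, the restriction to $v$ recovers $\mathfrak{m}$); and every simple module with no sink in its support is of this form by the forward direction. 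I expect the only genuinely delicate steps to be these "first return to $C$" arguments — precisely where the hypothesis of pairwise disjoint cycles enters — with the rest a bookkeeping application of Proposition \ref{hereditary}, Theorem \ref{Ser2}, Lemma \ref{Temel}, and Propositions \ref{teorem}, \ref{pp^*}, \ref{Path}.
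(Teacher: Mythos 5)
Your proof is correct and rests on the same core machinery as the paper's: Theorem \ref{Ser2} applied at a sink or at the source of a cycle with no exit in a suitable subgraph, together with the identification of simple modules over the corner algebras ${\bf k}$ and ${\bf k}[x,x^{-1}]$ via Lemma \ref{Temel} and Proposition \ref{Path}. The only difference is how you get there — you first pass to the support subgraph $\Gamma_M$ using the annihilator and Proposition \ref{hereditary}, then choose a maximal cycle and verify exit-freeness by an explicit first-return argument, whereas the paper chooses a $\leadsto$-minimal vertex of the support via the basis of Theorem \ref{basis} and leaves the exit-freeness of $C$ in $\Gamma_{\leadsto C}$ implicit; this is a difference of bookkeeping, not of substance.
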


\begin{proof}
We pick a vertex $v$ which is minimal with respect to $\leadsto$ such that $Mv \neq 0$ and $0\neq m \in Mv$. Since $M$ is simple, $M=mL$. So $M$ is spanned by $mpq^*$ where $sp=v$ and $tp=tq=sC$ for some cycle $C$ with $sC\in V_M$ or $tp=tq$ is a sink in $V_M$ by Theorem \ref{basis}. We may assume that $v$ is a sink (case (i)) or $v=sC$ (case (ii)), replacing $v$ if necessary because $v$ is minimal in $V_M$.
In both cases $V_M=V_{\leadsto v}$ because $mpq^* \in Msq$ and $sq \leadsto v$ so $V_M \subseteq V_{\leadsto v}$ and $Msq\neq 0$ by Proposition \ref{pp^*}(i) . \\

If $v\in V_M$ is a sink then via the one-to-one correspondence given in Theorem \ref{Ser2} between simple $L_{\bf k}(\Gamma)$-modules generated by $Mv$ and simple $vL_{\bf k}(\Gamma)v\cong {\bf k}$-modules we get that 
$$M\cong \frac{{\bf k}}{\mathfrak{m}} \bigotimes_{\bf k} vL_{\bf k} (\Gamma) \cong \frac{vL_{\bf k}(\Gamma)}{\mathfrak{m}vL_{\bf k}(\Gamma)}\cong vL_{^{{\bf k}}\!/\!_{\mathfrak{m}}}(\Gamma)$$
using ${\bf k}\cong vL_{\bf k}(\Gamma)v$ by Proposition \ref{pp^*}(iii), that simple ${\bf k}$-modules are of the form ${\bf k} \diagup \mathfrak{m}$ and the restriction of the isomorphism between $L_{\bf k}(\Gamma) \diagup \mathfrak{m}L_{\bf k}(\Gamma)$ and $ L_{^{{\bf k}}\!/\!_{\mathfrak{m}}}(\Gamma)$ to $vL_{^{{\bf k}}\!/\!_{\mathfrak{m}}}(\Gamma)$. \\

If $v=sC$ then $vL_{\bf k}(\Gamma)v \cong {\bf k}[x,x^{-1}]$ by Proposition \ref{Path}(ii) and we get 
  $$M\cong \frac{{\bf k}[x,x^{-1}]}{\mathfrak{m}} \bigotimes_{{\bf k}[x,x^{-1}]} vL_{\bf k}(\Gamma_{\leadsto C})$$
  where $\mathfrak{m}$ is the unique maximal ideal of ${\bf k}[x,x^{-1}]$ such that the simple ${\bf k}[x,x^{-1}]$-module  ${\bf k}[x,x^{-1}] \diagup{\mathfrak{m}}$ corresponds to $M$ via the functor $\underline{\>\>\>} \otimes vL_{\bf k}(\Gamma)$ of Theorem \ref{Ser2}.\\
  
  One-to-one correspondences between isomorphism classes of simple $L_{\bf k}(\Gamma)$-modules of $M$ and the relevant cartesian product given above, are gotten by observing that the support $V_M$ determines the relevant sink or cycle uniquely and that the isomorphism class of a simple $R$-module for a commutative ring $R$ with 1 determines a unique maximal ideal of $R$, namely its annihilator.  
 \end{proof}

\begin{remark}
The classification Theorem above yields explicit bases for simple $L_{\bf k}(\Gamma)$-modules:\\ 
\noindent
 (i) If $V_M$ contains a sink $w$ then $$M\cong wL_{^{{\bf k}}\!/\!_{\mathfrak{m}}}(\Gamma) $$
 has $wP_w^*$ as a ${\bf k} \diagup \mathfrak{m}$-basis by Proposition \ref{Path}(iii).\\
 \noindent
 (ii) If $V_M =V_{\leadsto C}$ then 
 
 $$ M = \frac{{\bf k}[x,x^{-1}]}{\mathfrak{m}} \bigotimes_{{\bf k}[x,x^{-1}]} vL_{\bf k}(\Gamma_{\leadsto C}) $$ has basis 
   $vP_C^*$ over ${\bf k}[x,x^{-1}]\diagup \mathfrak{m}$ by Corollary   \ref{corner}, identifying $v:=sC$ with $(1+\mathfrak{m})\otimes v$.\\
 \end{remark}

When  ${\bf k}$ is a field then the Classification Theorem above simplifies and it is more explicit: 

\begin{corollary} \label{Classification 2}

Let $\Gamma$ be a finite digraph whose cycles are pairwise disjoint, $\mathbb{F}$ a field and $M$ a simple $L_{\mathbb{F}}(\Gamma)$-module. \\

\noindent 
(i) If there is a sink $w$ in $\Gamma$ with $Mw\neq 0$ then $V_M=V_{\leadsto w}$ and   $$M\cong  wL_{\mathbb{F}}(\Gamma)\> .$$ 

Conversely, the projective $L_{\mathbb{F}}(\Gamma)$-module $wL_{\mathbb{F}}(\Gamma)$ is simple for each sink $w$ in $\Gamma$. This gives a one-to-one correspondence between  isomorphism classes of simple projective $L_{\mathbb{F}} (\Gamma)$-modules and the set of sinks in $\Gamma$.\\

\noindent
(ii) If $Mw=0$ for every sink $w$ in $\Gamma$ then there is a unique cycle $C$ in $\Gamma_M =\Gamma_{\leadsto C}$ and
$$M\cong \frac{vL_{\mathbb{F}}(\Gamma)}{f(C^*)L_{\mathbb{F}}(\Gamma)}$$ where $v:=sC$, $\> f(x)=1-g(x)$ with $0\neq g(x) \in x\mathbb{F}[x]$ and $f(C^*):=v-g(C^*)$. In fact, $f(x)= det (v-xC)$, the characteristic polynomial of the $\mathbb{F}$-linear operator on $Mv$ defined by right multiplication with $C$. \\

  Conversely, $vL_{\mathbb{F}}(\Gamma)/f(C^*)L_{\mathbb{F}}(\Gamma)$ is a simple $L_{\mathbb{F}}(\Gamma)$-module for each cycle $C$ in $\Gamma$ with $v=sC$ and each irreducible polynomial $f(x)$ with $f(0)=1$. This gives a one-to-one correspondence between isomorphism classes of nonprojective simple $L_{\mathbb{F}}(\Gamma)$-modules  and $$\{ f(x) \in  \mathbb{F}[x]  \> \vert \> f(x) \textit{ irreducible and } f(0)=1 \} \times \{ C \> \vert \> C \textit{ a cycle in } \Gamma \}.$$
\end{corollary}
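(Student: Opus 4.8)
The plan is to obtain Corollary~\ref{Classification 2} from Theorem~\ref{Classification 1} by putting $\mathbf{k}=\mathbb{F}$ and unwinding the objects that appear. For part~(i): a field has a unique maximal ideal, namely $(0)$, so the module $wL_{^{\mathbb{F}}\!/\!_{0}}(\Gamma)$ of Theorem~\ref{Classification 1}(i) is literally $wL_{\mathbb{F}}(\Gamma)$; it is simple by the ``field'' direction of Proposition~\ref{Path}(iii), and it is projective because it is a direct summand of the free module $L_{\mathbb{F}}(\Gamma)=\bigoplus_{v\in V}vL_{\mathbb{F}}(\Gamma)$ (here $1=\sum_{v}v$ since $\Gamma$ is finite). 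Since the maximal spectrum of $\mathbb{F}$ is a single point, the bijection of Theorem~\ref{Classification 1}(i) collapses to a bijection between isomorphism classes of simple $L_{\mathbb{F}}(\Gamma)$-modules having a sink in their support and the set of sinks of $\Gamma$; by Theorem~\ref{Ser2} these are exactly the simple projective $L_{\mathbb{F}}(\Gamma)$-modules.

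For part~(ii) I would first describe the maximal ideals of $\mathbb{F}[x,x^{-1}]$ concretely: it is a PID (a localization of $\mathbb{F}[x]$) whose unit group is $\{\lambda x^{n}\mid\lambda\in\mathbb{F}^{\times},\,n\in\mathbb{Z}\}$, so $x$ is a unit and every maximal ideal is $(f(x))$ for a polynomial $f\in\mathbb{F}[x]$ that is irreducible with $f(0)\neq0$, unique after normalizing $f(0)=1$. This yields the bijection between the maximal spectrum of $\mathbb{F}[x,x^{-1}]$ and $\{f\in\mathbb{F}[x]\mid f\text{ irreducible},\,f(0)=1\}$. Next I would simplify the module $\frac{\mathbb{F}[x,x^{-1}]}{\mathfrak{m}}\otimes_{vLv}vL_{\mathbb{F}}(\Gamma_{\leadsto C})$ of Theorem~\ref{Classification 1}(ii): using the isomorphism $\mathbb{F}[x,x^{-1}]\cong vL_{\mathbb{F}}(\Gamma_{\leadsto C})v$ with $x\leftrightarrow C^{*}$ (Proposition~\ref{Path}(ii), valid because $C$ has no exit in $\Gamma_{\leadsto C}$) together with the freeness of $vL_{\mathbb{F}}(\Gamma_{\leadsto C})$ as a left $vL_{\mathbb{F}}(\Gamma_{\leadsto C})v$-module (Corollary~\ref{corner}), tensoring with $\mathbb{F}[x,x^{-1}]/(f)$ gives
\[
\frac{\mathbb{F}[x,x^{-1}]}{\mathfrak{m}}\otimes_{vLv}vL_{\mathbb{F}}(\Gamma_{\leadsto C})\;\cong\;\frac{vL_{\mathbb{F}}(\Gamma_{\leadsto C})}{f(C^{*})\,L_{\mathbb{F}}(\Gamma_{\leadsto C})}.
\]
Since the cycles of $\Gamma$ are pairwise disjoint, $C$ is exclusive, and $f=1-g$ with $0\neq g\in x\mathbb{F}[x]$ (as $\mathfrak{m}$ is proper), so Lemma~\ref{Preclass}(ii) identifies the right-hand side with $vL_{\mathbb{F}}(\Gamma)/f(C^{*})L_{\mathbb{F}}(\Gamma)$, which is the stated form; $V_{M}=V_{\leadsto C}$ is Lemma~\ref{Preclass}(i). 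The converse statement and the claimed bijection with pairs (irreducible $f$ with $f(0)=1$, cycle $C$) then follow by transporting the corresponding assertions of Theorem~\ref{Classification 1}(ii) through these identifications.

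What remains is the identity $f(x)=\det(v-xC)$, and this is the only step I expect to require a genuine, if small, computation. From the above, $Mv\cong vLv/f(C^{*})vLv\cong\mathbb{F}[x,x^{-1}]/(f(x))$ as a right $vLv$-module, an $\mathbb{F}$-space of dimension $d=\deg f$ with basis $1,x,\dots,x^{d-1}$; since $CC^{*}=v$ in $\Gamma_{\leadsto C}$ and $C^{*}\leftrightarrow x$, right multiplication by $C$ on $Mv$ corresponds to multiplication by $x^{-1}$ on $\mathbb{F}[x,x^{-1}]/(f(x))$. Writing $f(x)=1+a_{1}x+\cdots+a_{d}x^{d}$, the relation $f(x)=0$ gives $x^{-1}=-(a_{1}+a_{2}x+\cdots+a_{d}x^{d-1})$, so multiplication by $x^{-1}$ has a companion-type matrix in this basis whose reversed characteristic polynomial $\det(\mathrm{id}_{Mv}-xR_{C})$ works out to $f(x)$; equivalently, over $\overline{\mathbb{F}}$ the eigenvalues of $R_{C}$ are the inverses of the roots $\alpha$ of $f$, and $\prod_{\alpha}(1-x\alpha^{-1})=f(x)$ because $f(0)=1$ (and one checks $f(0)=1$ matches $\det(v)=1$). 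The only delicate points are this bookkeeping of $x\leftrightarrow C^{*}$ versus $x^{-1}\leftrightarrow C$ and the normalization; everything else is a direct specialization of results already established above.
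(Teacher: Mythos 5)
Your proposal is correct and follows essentially the same route as the paper: specialize Theorem \ref{Classification 1} to ${\bf k}=\mathbb{F}$, identify the maximal ideals of $\mathbb{F}[x,x^{-1}]$ with irreducible polynomials normalized by $f(0)=1$, pass from $\Gamma_{\leadsto C}$ back to $\Gamma$ via Lemma \ref{Preclass}(ii), and verify $f(x)=\det(v-xC)$ by the companion-matrix/eigenvalue bookkeeping (which is exactly the ``standard computation'' the paper leaves implicit, and which you carry out correctly, including the $x\leftrightarrow C^*$ versus $x^{-1}\leftrightarrow C$ normalization). The only point worth making fully explicit is that the modules in (ii) are nonprojective, which follows in one line from Theorem \ref{Ser2} since $Mv\cong\mathbb{F}[x,x^{-1}]/(f)$ is a torsion, hence nonprojective, $\mathbb{F}[x,x^{-1}]$-module.
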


\begin{proof}
(i) The only maximal ideal of $\mathbb{F}$ is $0$ so $M\cong wL_{\mathbb{F}}(\Gamma)$ by Theorem \ref{Classification 1}. Also $V_M=V_{\leadsto w}$ by Theorem \ref{Classification 1}.\\

By Proposition \ref{Path}(iii)  $wL_{\mathbb{F}}(\Gamma)$ is simple for each sink $w$. The one-to-one correspondence is given by $w \mapsto [wL_{\mathbb{F}}(\Gamma)]$, the isomorphism class of $wL_{\mathbb{F}}(\Gamma)$, and $[P]$, the isomorphism class of a simple projective, corresponds to the unique sink $w$ in $V_P$, as in Theorem \ref{Classification 1}.\\

(ii) By Proposition \ref{Path}(ii) we have $C^* $ corresponding to $x$ in the isomorphism between $vL_{\mathbb{F}}(\Gamma)v$ and $\mathbb{F}[x,x^{-1}]$. Maximal ideals $\mathfrak{m}$ of $\mathbb{F}[x,x^{-1}]$ are uniquely determined by irreducible polynomials $f(x) \in \mathbb{F}[x]$ where $f(x)=1-g(x)$ and $0\neq g(x) \in x\mathbb{F}[x]$, so $\mathfrak{m}= (f(x))$. Hence 
$$M\cong \frac{\mathbb{F}[x,x^{-1}]}{(f(x))} \bigotimes_{\mathbb{F}[x,x^{-1}]} vL_{\mathbb{F}}(\Gamma_{\leadsto C}) \cong \frac{vL_{\mathbb{F}}(\Gamma_{\leadsto C})}{f(C^*)L_{\mathbb{F}}(\Gamma_{\leadsto C})} \cong \frac{vL_{\mathbb{F}}(\Gamma)}{f(C^*)L_{\mathbb{F}}(\Gamma)}$$
where the first isomorphism is given by Theorem \ref{Classification 1}(ii) and the isomorphism by Lemma \ref{Preclass}(ii).\\

$vL_{\mathbb{F}}(\Gamma) \diagup f(C^*)L_{\mathbb{F}}(\Gamma)$ is simple for each cycle $C$ in $\Gamma$ by Theorem \ref{Classification 1}(ii). Since $Mv\cong \mathbb{F}[x,x^{-1}] \diagup (f(x))$ an $\mathbb{F}$-basis for $Mv$ is given by $\{(C^*)^k\}$ for $k=1,2, \cdots, deg(f)$. Right multiplication by $C$ sends $C^*$ to $g(C^*)$ and $(C^*)^k$ to $(C^*)^{k-1}$ for $k=2, \cdots, deg(f)$. A standard computation yields that $det(v-xC)=f(x)$. \\

Given a simple $L_{\mathbb{F}}(\Gamma)$-module $M$ we recover $C$ as the unique minimal cycle in $\Gamma_M$ and $f(x)$ as the characteristic polynomial of right multiplication by $C$ on $Mv$, establishing the one-to-one correspondence. Note that $f(x)$ is independent of the choice of the base vertex $v=sC$ 
since $p^*Cp$ on $Mtp$ is "conjugate" to $C$ on $Mv$ where $p$ is the unique segment of $C$ connecting $v$ to $tp$.
\end{proof}\\

Corollary 4.6 in \cite{ar14} states that all simple  $L_{\mathbb{F}}(\Gamma)$-modules are Chen modules when $\mathbb{F}$ is a field and $\Gamma$ is a finite digraph whose cycles are pairwise disjoint. Simple projective $L_{\mathbb{F}}(\Gamma)$-modules are isomorphic to $wL_{\mathbb{F}}(\Gamma)$-modules for some sink $w$ by Proposition \ref{Path}(iii). For nonsimple projective $L_{\mathbb{F}}(\Gamma)$-modules the correspondence between their statement and our classification is as follows.\\

Chen modules are formal linear combinations of infinite paths in a single tail equivalence class \cite{che15}.
They are usually defined as left modules with $(pq^*)\alpha$ defined using a sufficiently long initial segment of the infinite path $\alpha$ for $pq^*\in L_{\mathbb{F}}(\Gamma)$. A left $L_{\mathbb{F}}(\Gamma)$-module can be made a right module $M$ by defining $m\cdot a:= a^*m$ for $a \in L_{\mathbb{F}}(\Gamma)$ and $m \in M$. When the cycles in $\Gamma$ are pairwise disjoint all infinite paths are eventually periodic, being tail equivalent to $C^{\infty}$ for some cycle $C$. These are called rational Chen modules and they are isomorphic to $vL_{\mathbb{F}}(\Gamma) \diagup (v-C^*)L_{\mathbb{F}}(\Gamma)$ where $v=sC$ in Corollary \ref{Classification 2}, i.e., $f(x)=1-x$. When $f(x)=1-\lambda x$ with $0\neq \lambda \in \mathbb{F}$ we get the twisted rational Chen modules. When $\mathbb{F}$ is algebraically closed these are all possible irreducible polynomials $f(x)$. 
Strictly speaking, the modules $vL_{\mathbb{F}}(\Gamma) \diagup f(C^*)L_{\mathbb{F}}(\Gamma)$ with $deg f(x) >1$ are not Chen modules, they can be obtained from twisted rational Chen modules by extending the coefficient field $\mathbb{F}$. \\

\subsection{Extensions}

\begin{lemma} \label{re-ext}  If $A$ is an $L_{{\bf k}}(\Gamma)$-module and $B:= vL_{{\bf k}}(\Gamma) \diagup f(C^*)L_{{\bf k}}(\Gamma)$ where $C$ is an exclusive cycle in $\Gamma$, $v=sC$ and $f(C^*)=v-g(C^*)$ with $0\neq g(x) \in x{\bf k}[x]$ then $Ext(B,A)  \cong Av/Af(C^*)$.
In particular, $Ext(B,A)=0$ if $Av=0$.
\end{lemma}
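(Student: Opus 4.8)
The plan is to compute $\mathrm{Ext}(B,A)$ directly from a projective presentation of $B$. Since $C$ is an exclusive cycle, $f(C^*)=v-g(C^*)$ is a genuine element of $vL(\Gamma)v\cong{\bf k}[x,x^{-1}]$ (using Proposition \ref{Path}(ii)), and by definition $B=vL/f(C^*)L$. So I would first produce the presentation
$$0 \longrightarrow f(C^*)L \stackrel{\iota}{\longrightarrow} vL \longrightarrow B \longrightarrow 0 .$$
The key point is that $f(C^*)L$ is itself a free (hence projective) $L$-module: right multiplication $vL \to f(C^*)L$, $x \mapsto f(C^*)x$, is onto by construction, and it is injective because $f(C^*)=v-g(C^*)$ is not a zero-divisor when acting on $vL$ — concretely, using the basis $v\mathcal{B}=\{C^nq^*\mid n\in\mathbb{Z},\ q\in P_v\}$ of $vL$ from Remark \ref{baz2} and Corollary \ref{corner}, multiplication by $f(C^*)=v-g(C^*)$ on the free ${\bf k}[x,x^{-1}]$-module $vL$ is multiplication by the Laurent polynomial $f(x)$, which is a non-zero-divisor in ${\bf k}[x,x^{-1}]$ since $f(0)=1\neq 0$ (so $f$ has a unit leading-or-trailing coefficient after clearing denominators; more simply, $f(x)=1-g(x)$ with $g\in x{\bf k}[x]$ forces $f$ to be a non-zero-divisor). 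Thus $vL \xrightarrow{\ \cong\ } f(C^*)L$ and the displayed sequence is a length-one projective resolution of $B$ by the projective modules $vL$.

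Next I would apply $\mathrm{Hom}_L(-,A)$ to $0\to vL \xrightarrow{f(C^*)} vL \to B\to 0$. Using the standard identification $\mathrm{Hom}_L(vL,A)\cong Av$ (via $\varphi\mapsto\varphi(v)$, as in Theorem \ref{Ser2}'s adjunction), the connecting map $\mathrm{Hom}_L(vL,A)\to\mathrm{Hom}_L(vL,A)$ becomes the map $Av\to Av$ given by $a\mapsto a f(C^*)$ (precomposition with $\cdot f(C^*)$ translates to postmultiplication, i.e. right multiplication by $f(C^*)$, on $Av$). Therefore the complex computing $\mathrm{Ext}^*(B,A)$ is
$$0 \longrightarrow Av \stackrel{\cdot f(C^*)}{\longrightarrow} Av \longrightarrow 0 ,$$
whence $\mathrm{Hom}_L(B,A)=\ker(\cdot f(C^*)\text{ on }Av)$ and $\mathrm{Ext}(B,A)=\mathrm{Ext}^1(B,A)=\mathrm{coker}(\cdot f(C^*))=Av/Af(C^*)$, which is exactly the claimed isomorphism. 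The final sentence is then immediate: if $Av=0$ then $Av/Af(C^*)=0$, so $\mathrm{Ext}(B,A)=0$.

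The main obstacle I anticipate is the injectivity of $\cdot f(C^*): vL\to vL$ — equivalently, that $f(C^*)$ is a non-zero-divisor on the free ${\bf k}[x,x^{-1}]$-module $vL$. For a general coefficient ring ${\bf k}$ the Laurent polynomial $f(x)=1-g(x)$ need not have unit leading coefficient, so one cannot just invoke "monic polynomials are non-zero-divisors." The clean fix is to observe that $f(x)=1-g(x)$ with $g\in x{\bf k}[x]$ means $f$ has constant term $1$, so after writing $f(x)=x^{-d}\tilde f(x)$ for a polynomial $\tilde f\in{\bf k}[x]$ with $\tilde f(0)$ equal to the top coefficient of $g$ and $\tilde f$ having constant-term-side unit $1$ — more directly, the constant term being a unit ($=1$) makes $f$ a non-zero-divisor in ${\bf k}[x,x^{-1}]$: if $f(x)h(x)=0$ with $h\neq 0$, looking at the lowest-degree term of the product (whose coefficient is $1$ times the lowest coefficient of $h$) forces that coefficient to vanish, a contradiction. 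This same argument works coordinatewise on $vL=\bigoplus_{q\in P_v}{\bf k}[x,x^{-1}]\,q^*$, giving the injectivity, and the rest is the routine homological bookkeeping sketched above.
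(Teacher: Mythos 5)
Your overall strategy is exactly the paper's: present $B$ by the two-term complex $vL \stackrel{f(C^*)\cdot}{\longrightarrow} vL \longrightarrow B \longrightarrow 0$, apply $Hom^{L}(\underline{\>\>\>},A)$, identify $Hom^{L}(vL,A)\cong Av$ and the induced map with right multiplication by $f(C^*)$, and read off the cokernel $Av/Af(C^*)$. That bookkeeping is correct, and you are also right that getting an isomorphism (rather than merely an inclusion of $Ext(B,A)$ into $Av/Af(C^*)$) requires left multiplication $f(C^*)\cdot\colon vL\to vL$ to be injective; the paper derives the lemma from the right-exact presentation and supplies the injectivity separately, in the remark immediately following.

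However, your justification of that injectivity has a genuine gap. You decompose $vL=\bigoplus_{q\in P_v}{\bf k}[x,x^{-1}]\,q^*$ by invoking Remark \ref{baz2} and Corollary \ref{corner}, but Corollary \ref{corner} (and the identification $vLv\cong{\bf k}[x,x^{-1}]$ from Proposition \ref{Path}(ii)) requires $C$ to have \emph{no exit}. The hypothesis here is only that $C$ is exclusive, i.e.\ that it meets no other cycle; it may well have exits. For the Toeplitz digraph ($v$ carrying a loop $C$ and an arrow $f$ to a sink $w$) one has $P_v=\{v\}$, so your decomposition would make $vL$ the span of $\{C^n\mid n\in\mathbb{Z}\}$, whereas $vL$ also contains $f$, $ff^*$, $C^nfq^*$, etc., and $vLv\ncong{\bf k}[x,x^{-1}]$ because $CC^*\neq v$ when $C$ has an exit. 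So your argument establishes injectivity only on a proper ${\bf k}$-summand of $vL$ --- and the lemma is applied later (Lemma \ref{re-ext2}, Theorem \ref{katlilik}) precisely to cycles that do have exits. The repair is the one in the paper's remark: split $vL=\bigoplus_{u\in V}vLu$; on $vLu$ with $u\notin V_{\leadsto C}$, left multiplication by $f(C^*)=v-g(C^*)$ is invertible with inverse $v+g(C^*)+\cdots+g(C^*)^{n-1}$ applied termwise for $n$ large, since $(C^*)^np=0$ once $n>l(p)$ as in the proof of Lemma \ref{Preclass}; on the remaining components one reduces to $\Gamma_{\leadsto C}$, where $C$ genuinely has no exit, and only there does your ${\bf k}[x,x^{-1}]$-freeness argument (via Lemma \ref{Temel} and Corollary \ref{corner}) apply. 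Your observation that $f(x)=1-g(x)$ is a non-zero-divisor in ${\bf k}[x,x^{-1}]$ because its lowest-degree coefficient is $1$ is fine and is the right tool for that last step.
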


\begin{proof}
We use the exact sequence $vL_{{\bf k}}(\Gamma) \stackrel{f(C^*) \cdot}{\longrightarrow} vL_{{\bf k}}(\Gamma) \longrightarrow B \longrightarrow 0$ to compute $Ext(B,A)$. Hence $Ext(B,A) \cong Coker\{ Hom^{L}(vL_{{\bf k}}(\Gamma), A) \longrightarrow Hom^{L}(vL_{{\bf k}}(\Gamma), A)\}$. Since  $Hom^{L}(vL_{{\bf k}}(\Gamma), A) \cong Av$ and $Hom^{L}(f(C^*) \cdot , A)= \cdot f(C^*)$ on $Av$, we are done.
\end{proof}

\begin{remark}
In fact, $0 \longrightarrow vL_{{\bf k}}(\Gamma) \stackrel{f(C^*) \cdot}{\longrightarrow} vL_{{\bf k}}(\Gamma) \longrightarrow B\longrightarrow 0$ is a projective resolution of $B$: as in the proof of Lemma \ref{Preclass} $vL_{{\bf k}}(\Gamma)u \stackrel{f(C^*) \cdot}{\longrightarrow} vL_{{\bf k}}(\Gamma)u$ is an isomorphism  if $u \notin V_{\leadsto v}$ and we may assume that $\Gamma =\Gamma_{\leadsto C}$. On $ vL_{{\bf k}}(\Gamma)v \cong {\bf k}[x,x^{-1}]$ left multiplication by $f(C^*)$ is identified with $f(x) \cdot$, hence it is one-to-one. If $u \in V_{\leadsto C}$ then $f(C^*) \cdot$ on $vL_{{\bf k}}(\Gamma)u= \bigoplus_{q \in P_C} vL_{{\bf k}}(\Gamma)q^*$ is a direct sum of ${\bf k}$-monomorphisms by Lemma \ref{Temel}, thus it is one-to-one.  \\
\end{remark}

\noindent
Let $Q_D^{sC}:=\{ q \in Path \Gamma \> \vert \> sq=sC , tq=sD , \> q\neq q_1D, \> q\neq Cq_2 \text{ for any } q_1, q_2 \in Path\Gamma\}$ and 
$Q_w^{sC}:=\{ q \in Path \Gamma \> \vert \> sq=sC \>, tq=w\> ,  \> q\neq Cq' \text{ for any } q' \in Path\Gamma\}$.

\begin{lemma} \label{re-ext2}
Let $\Gamma$ be a finite digraph, $A$ be an 
$L\Gamma:=L_{{\bf k}}(\Gamma)$-module generated by $Aw$  where $w$ is either (i) a sink or (ii) the source of a cycle with no exit. If $C$ is an exclusive cycle with $v=sC$, $f(C^*)=v-g(C^*)$ where $f(x)= 1-g(x)$ and $0\neq g(x) \in x{\bf k}[x]$ then $Ext (vL\Gamma \diagup f(C^*) L\Gamma, A )$ is isomorphic to a direct sum of copies of $Aw$ indexed by $\{C^nq \> \vert \> q \in Q_w^v, \> 0\leq n < deg(f) \}$.
\end{lemma}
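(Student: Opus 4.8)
The plan is to reduce the computation of $\mathrm{Ext}(vL\Gamma/f(C^*)L\Gamma, A)$ to the previous Lemma \ref{re-ext} by computing $Av$ and the cokernel of $\cdot f(C^*)$ on it, using the structure of $A$ as a module generated by $Aw$. By Lemma \ref{re-ext} we have $\mathrm{Ext}(vL\Gamma/f(C^*)L\Gamma, A)\cong Av/Af(C^*)$, so the whole problem is to identify this quotient as a ${\bf k}$-module (in fact as a direct sum of copies of $Aw$) in terms of the combinatorial index set $\{C^nq \mid q\in Q_w^v,\ 0\le n<\deg f\}$.

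First I would use Lemma \ref{Temel} to write $A=\bigoplus_{p\in P_w} Awp^*$ (case (i), $w$ a sink) or $A=\bigoplus_{p\in P_C'} Awp^*$ (case (ii), $w=sC'$ source of an exit-free cycle $C'$), as a ${\bf k}$-module. Multiplying on the right by $v$, a summand $Awp^*$ contributes to $Av$ exactly when $p^*v\neq 0$, i.e. when $v$ is an internal vertex of $p$ or $v=sp$; more precisely $Awp^*\cdot v = Awp^*$ iff $sp = v$. Writing each such $p$ with $sp=v$ uniquely in the form $C^n q$ with $q$ not beginning with $C$ and not passing through $v$ internally after the initial $C^n$-part — here is where the sets $Q_w^v$ (and $Q_D^{sC}$) come in, $Q_w^v$ being exactly the paths from $v$ to $w$ that don't start with a full traversal of $C$ — I get $Av \cong \bigoplus_{n\ge 0,\ q\in Q_w^v} Aw(C^nq)^*$ as a ${\bf k}$-module, with the ${\bf k}[x,x^{-1}]\cong vL\Gamma v$-action (via $x\leftrightarrow C^*$) shifting the exponent $n$.

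Next I would analyze right multiplication by $f(C^*) = v - g(C^*)$ on $Av$. Since $(C^*)$ acts as the shift $n\mapsto n+1$ on the grading by the $C^n$-exponent (tensoring up the ${\bf k}[x,x^{-1}]$-module structure over the index set $Q_w^v$ and $Aw$), the map $\cdot f(C^*)$ is essentially $\bigoplus_{q\in Q_w^v}$ (multiplication by $f(x)$ on $Aw\otimes_{{\bf k}} {\bf k}[x,x^{-1}]$, viewing $Aw$ with trivial $x$-action) — but more carefully, $Aw$ already carries a ${\bf k}$-module structure and the relevant module is $Aw \otimes_{{\bf k}} ({\bf k}[x]\text{-part})$; since $f(0)=1$ is a unit, $f(x)$ is a non-zero-divisor and $\mathrm{coker}(f(x)\cdot)$ on a free rank-one ${\bf k}[x,x^{-1}]$-module tensored with $Aw$ is $Aw\otimes_{{\bf k}} {\bf k}[x]/(f(x))$, which is free over $Aw$ of rank $\deg f$, a basis being $\{1,x,\dots,x^{\deg f -1}\}$. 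This yields $Av/Af(C^*)\cong \bigoplus_{q\in Q_w^v}\bigoplus_{0\le n<\deg f} Aw$, indexed by $\{C^nq \mid q\in Q_w^v,\ 0\le n<\deg f\}$, as claimed.

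\textbf{The main obstacle} I anticipate is the bookkeeping in the second step: correctly decomposing $Av$ using a normal form for paths $p$ with $sp=v$ ending at $w$, keeping track of which summands $Awp^*$ survive right-multiplication by $v$, and verifying that the $C^*$-action is genuinely the clean shift-by-one on the exponent so that the $f(C^*)$-cokernel computation reduces to the one-variable fact $\mathrm{coker}(f(x)\cdot \text{ on } {\bf k}[x,x^{-1}]) = {\bf k}[x]/(f(x))$. One subtlety is that for $q\in Q_w^v$ the path $C^nq$ might itself pass through $v$ again (e.g. if $q$ returns to $v$), so one must be careful that $Q_w^v$ as defined (paths from $sC$ to $w$ not of the form $Cq'$) together with the $C^n$ prefix really does give each relevant $p$ exactly once; the exclusivity of $C$ and the "$q\neq Cq'$" condition are exactly what make this normal form unique, mirroring the argument in Theorem \ref{basis}. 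Once the normal form and the shift-action are pinned down, the rest is the routine commutative-algebra fact about $f$ being a unit-constant-term polynomial, hence a non-zero-divisor with cokernel free of rank $\deg f$ over the coefficients.
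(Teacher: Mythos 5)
Your proposal is correct and follows essentially the same route as the paper's proof: apply Lemma \ref{re-ext} to get $Ext(B,A)\cong Av/Af(C^*)$, decompose $Av=\bigoplus_{q\in Q_w^v,\ n\in\mathbb{N}}Aw(C^nq)^*$ via Lemma \ref{Temel} and the normal form $p=C^nq$, and identify the cokernel of $\cdot f(C^*)$ using ${\bf k}[x]/(f(x))\cong\bigoplus_{n=0}^{\deg(f)-1}{\bf k}x^n$, with $Aw(C^nq)^*\cong Aw$ by Proposition \ref{pp^*}(i). The only cosmetic slip is that the exponent $n$ ranges over $\mathbb{N}$, not $\mathbb{Z}$, so the relevant one-variable module is ${\bf k}[x]$ rather than ${\bf k}[x,x^{-1}]$; since $f(0)=1$ is a unit this does not change the cokernel.
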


\begin{proof} Let $B:=vL\Gamma\diagup f(C^*) L\Gamma$. 
By Lemma \ref{Temel}
$$Av=\bigoplus_{p \in P_w^v} Awp^*=\bigoplus_{q \in Q_w^v, \> n \in \mathbb{N}}Aw(C^nq)^*$$ 
where $C^0:=v$ as usual. Hence  
$$ Ext (B,A) \> \> =\bigoplus_{q \in Q_w^v, \> 0\leq n < deg(f)  }Aw(C^nq)^* $$
by Lemma \ref{re-ext}, because $\{ C^n \> \vert \> n \in \mathbb{N} \}$  is ${\bf k}$-linearly independent and $$\frac{{\bf k}[x]}{(f(x))}\> \cong \bigoplus_{n=0}^{ deg(f)-1 } {\bf k}x^n \textit{ as a }{\bf k}-\text{module.}$$ 
Finally, $Aw(C^nq)^* \cong Aw$ by Proposition \ref{pp^*}(i). Note that if $v \notin V_{\leadsto w}$ then $Q_w^v= \emptyset$ and $Ext ( B, A ) =0$ by Lemma \ref{re-ext} since $Av=0$, so the statement holds.
\end{proof}

For the rest of this section ${\bf k}=\mathbb{F}$ is a field and $\Gamma$ is a finite digraph whose cycles are pairwise disjoint. We want to compute $Ext(B,A)$ when $A$ and $B$ are of the form $wL_{\mathbb{F}}(\Gamma)$ with $w$ a sink or $vL_{\mathbb{F}}(\Gamma)\diagup f(C^*)L_{\mathbb{F}}(\Gamma)$ where $v=sC$, $f(C^*)=v-g(C^*)$ with $f(x)= 1-g(x)$ and $0\neq g(x) \in x{\mathbb{F}}[x]$. First we take care of a few easy cases: \\

(i) If $B=wL_{\mathbb{F}}(\Gamma)$ then $Ext(B,A)=0$ since $B$ is projective. \\

(ii) If $v \notin V_A $ where $B=vL_{\mathbb{F}}(\Gamma)\diagup f(C^*)L_{\mathbb{F}}(\Gamma)$ then $Ext(B,A)=0$ by Lemma \ref{re-ext}.\\

(iii) Let $B=v_1L_{\mathbb{F}}(\Gamma)\diagup f_1(C_1^*)L_{\mathbb{F}}(\Gamma)$ and $A$ be either $wL_{\mathbb{F}}(\Gamma)$ with $w$ a sink or $A=v_2L_{\mathbb{F}}(\Gamma)\diagup f_2(C_2^*)L_{\mathbb{F}}(\Gamma)$ with $v_i=sC_i$ and $f_i$ as above for $i=1,2$. If there is a cycle strictly between $C_1$ and $w$,  or $C_1$ and $C_2$ then $Ext(B,A)$ is infinite dimensional by Lemma \ref{re-ext2}.\\

 (iv) If $B=vL_{\mathbb{F}}(\Gamma)\diagup f_1(C^*)L_{\mathbb{F}}(\Gamma)$ and $A=vL_{\mathbb{F}}(\Gamma)\diagup f_2(C^*)L_{\mathbb{F}}(\Gamma)$ where $v=sC$ and $f_i(x)$ as above for $i=1,2$ then $dim^{\mathbb{F}}Ext(B,A)=deg(f_2)-deg (gcd(f_1,f_2))$. Here $Ext(B,A) \cong Ext \big( \frac{\mathbb{F}[x,x^{-1}] }{(f_1(x))}, \frac{\mathbb{F}[x,x^{-1}] }{(f_2(x))} \big) $ by Theorem \ref{Ser2}.\\

In the remaining cases $B=v_1L_{\mathbb{F}}(\Gamma)\diagup f_1(C_1^*) L_{\mathbb{F}}(\Gamma)$ and $v_2$ either a sink and $A= v_2L_{\mathbb{F}}(\Gamma)$; or the source of a cycle $C_2$ and $A=v_2L_{\mathbb{F}}(\Gamma) \diagup f_2(C_2^*)L_{\mathbb{F}}(\Gamma)$ where $v_i=sC_i$ with   $f_i(x)$ as above for $i=1, 2$ and $v_1$ \textit{covers} $v_2$, that is, $v_1 \leadsto v_2$ and there is no cycle between $v_1$ and $v_2$.

\begin{theorem} \label {katlilik} Let $\mathbb{F}$ be a field, $\Gamma$ be a finite digraph whose cycles are pairwise disjoint. If $A$ and $B$ are as above then 
 \[ dim^{\mathbb{F}} Ext (B,A)=\begin{cases} 
			deg(f_1) \> \vert Q_{v_2}^{v_1} \vert &  \qquad v_2 \text{ a sink }  \\
			deg(f_1) \> \vert Q_{v_2}^{v_1} \vert \> deg(f_2) &\qquad  v_2=sC_2 
	\end{cases}
		\]
\end{theorem}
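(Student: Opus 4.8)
The plan is to reduce the general statement to the computations already set up in Lemma \ref{re-ext} and Lemma \ref{re-ext2}, using the classification results of the previous section to put $A$ into the required form. First I would observe that in both cases $A$ is generated by $Aw$ where $w=v_2$ is either a sink or $w=v_2=sC_2$ with $C_2$ a cycle with no exit \emph{inside the support subgraph of $A$}: indeed, by Proposition \ref{hereditary}(ii) the module $v_2L_\mathbb{F}(\Gamma)$ (respectively $v_2L_\mathbb{F}(\Gamma)\diagup f_2(C_2^*)L_\mathbb{F}(\Gamma)$) is, after quotienting by the ideal generated by $V\setminus V_{\leadsto v_2}$, an $L_\mathbb{F}(\Gamma_{\leadsto v_2})$-module, and in $\Gamma_{\leadsto v_2}$ the cycle $C_2$ has no exit and $v_2$ is a sink; this is exactly the hypothesis needed to invoke Lemma \ref{re-ext2}. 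Since $Ext$ over $L_\mathbb{F}(\Gamma)$ of modules supported on $\Gamma_{\leadsto v_2}$ agrees with $Ext$ over $L_\mathbb{F}(\Gamma_{\leadsto v_2})$ (the projective resolution $0\to v_1L\Gamma \xrightarrow{f_1(C_1^*)\cdot} v_1L\Gamma\to B\to 0$ from the Remark after Lemma \ref{re-ext} restricts well, and $Hom^L(v_1L\Gamma,A)\cong Av_1$ depends only on $A$), I may assume $\Gamma=\Gamma_{\leadsto v_2}$.

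Next I would apply Lemma \ref{re-ext2} with $A$ in hand: it gives $Ext(B,A)\cong\bigoplus Aw$ where the index set is $\{C_1^n q\mid q\in Q_{v_2}^{v_1},\ 0\le n<\deg(f_1)\}$, so as an $\mathbb{F}$-vector space
$$\dim^{\mathbb{F}}Ext(B,A)=\deg(f_1)\,\bigl|Q_{v_2}^{v_1}\bigr|\cdot\dim^{\mathbb{F}}(Aw).$$
It then remains only to compute $\dim^{\mathbb{F}}(Aw)$ in the two cases. If $v_2=w$ is a sink then $A=v_2L_\mathbb{F}(\Gamma)$ and $Av_2=v_2L_\mathbb{F}(\Gamma)v_2\cong\mathbb{F}$ by Proposition \ref{pp^*}(iii), so $\dim^{\mathbb{F}}(Aw)=1$ and the first branch follows. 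If $v_2=sC_2$ and $A=v_2L_\mathbb{F}(\Gamma)\diagup f_2(C_2^*)L_\mathbb{F}(\Gamma)$, then $Av_2\cong vL_\mathbb{F}(\Gamma)v\diagup f_2(C_2^*)\cdot\bigl(vL_\mathbb{F}(\Gamma)v\bigr)\cong\mathbb{F}[x,x^{-1}]\diagup(f_2(x))$ by Proposition \ref{Path}(ii), which has $\mathbb{F}$-dimension $\deg(f_2)$ (a basis being $\{(C_2^*)^k\mid 1\le k\le\deg f_2\}$, as noted in the proof of Corollary \ref{Classification 2}), giving the second branch.

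I expect the main obstacle to be the reduction step: making precise that $Ext_{L_\mathbb{F}(\Gamma)}(B,A)$ is unchanged when we pass to $\Gamma_{\leadsto v_2}$, i.e.\ that the ambient graph outside the support of $A$ contributes nothing. The cleanest way is to note that $Hom^L(v_1L\Gamma,A)\cong Av_1$ and that right multiplication by $f_1(C_1^*)$ on $Av_1$ only involves the action of $L_\mathbb{F}(\Gamma_{\leadsto v_1})$, which acts on $A$ through its quotient $L_\mathbb{F}(\Gamma_{\leadsto v_1})\to L_\mathbb{F}(\Gamma_{\leadsto v_2})$ when $v_1\leadsto v_2$; hence the cokernel defining $Ext(B,A)$ is computed entirely inside $\Gamma_{\leadsto v_2}$. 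Once this is granted, everything else is a direct assembly of Lemma \ref{re-ext2} with the two dimension counts $\dim^{\mathbb{F}}(Aw)\in\{1,\deg f_2\}$, and the condition that $v_1$ covers $v_2$ guarantees $Q_{v_2}^{v_1}$ is finite so that the answer is indeed finite-dimensional (contrast case (iii) above).
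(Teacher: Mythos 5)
Your proposal is correct and follows essentially the same route as the paper: reduce to $\Gamma=\Gamma_{\leadsto v_2}$ (the paper does this via Lemma \ref{Preclass}, you justify it slightly more explicitly through the projective resolution and the fact that $Ext(B,A)$ depends only on $Av_1$ with its $f_1(C_1^*)$-action), then apply Lemma \ref{re-ext2} together with $\dim^{\mathbb{F}}Av_2=1$ in the sink case and $\dim^{\mathbb{F}}Av_2=\deg(f_2)$ in the cycle case. No gaps.
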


\begin{proof} $A\cong v_2L_{\mathbb{F}}(\Gamma_{\leadsto v_2})$ (if $v_2$ is a sink) or $A \cong v_2L_{\mathbb{F}}(\Gamma_{\leadsto v_2}) \diagup f_2(C_2^*)L_{\mathbb{F}}(\Gamma_{\leadsto v_2})$ and $B\cong v_1L_{\mathbb{F}}(\Gamma_{\leadsto v_1})\diagup f_1(C_1^*) L_{\mathbb{F}}(\Gamma_{\leadsto v_1})$ by Lemma \ref{Preclass}. Since $v_1 \leadsto v_2$ we see that $\Gamma_{\leadsto v_1}$ is a subgraph of $\Gamma_{\leadsto v_2}$. Applying Lemma \ref{Preclass} again with $\Gamma_{\leadsto v_2}$ replacing $\Gamma$, we get that $B \cong v_2L_{\mathbb{F}}(\Gamma_{\leadsto v_2}) \diagup f_1(C_1^*)L_{\mathbb{F}}(\Gamma_{\leadsto v_2})$. Hence we may assume that $\Gamma =\Gamma_{\leadsto v_2}$. Using $dim^{\mathbb{F}} Av_2=1$ if $v_2$ is a sink, $dim^{\mathbb{F}} Av_2 =deg(f_2)$ if $v_2=sC_2$ the theorem follows from Lemma  \ref{re-ext2}.
\end{proof}\\

\section{Morita Invariants of $L_{\mathbb{F}}(\Gamma)$}

In this section  $\Gamma$ is a finite digraph whose cycles are  pairwise disjoint and ${\bf k}=\mathbb{F}$, a field.

\subsection{Invariant Filtrations of $\Gamma$, $L_{\mathbb{F}}(\Gamma)$ and $Mod_{L_{\mathbb{F}}(\Gamma)}$ }

 The sinks in $\Gamma$ are in one-to-one correspondence with isomorphism classes of simple projective $L_{\mathbb{F}}(\Gamma)$-modules. The cycles in $\Gamma$ can also be realized as isomorphism classes of certain finitely generated indecomposable modules defined purely in terms of $Mod_{L_{\mathbb{F}}(\Gamma)}$, the category of $L_{\mathbb{F}}(\Gamma)$-modules. This is achieved via a filtration of $Mod_{L_{\mathbb{F}}(\Gamma)}$ by Serre subcategories. In fact we will show that the poset of the sinks and the cycles of $\Gamma$ under $\leadsto$ is a Morita invariant of $L_{\mathbb{F}}(\Gamma)$.\\

\begin{theorem} \label{fgip}
If $P$ is a \textbf{f}initely \textbf{g}enerated \textbf{i}ndecomposable \textbf{p}rojective (\textbf{fgip}) module of $L_{\mathbb{F}}(\Gamma)$ then either (i) $P$ is simple and $P\cong wL_{\mathbb{F}}(\Gamma)$ for a unique sink $w$; or (ii) $P$ is not simple and $P\cong vL_{\mathbb{F}}(\Gamma)$  where $v\in V$ is on a unique cycle with no exit. This gives  one-to-one correspondences (i) between the isomorphism classes of simple projective $L_{\mathbb{F}}(\Gamma)$-modules and the sinks of $\Gamma$; (ii) between the isomorphism classes of non-simple fgip $L_{\mathbb{F}}(\Gamma)$-modules and the cycles of $\Gamma$ with no exits.  

\end{theorem}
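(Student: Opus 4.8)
The plan is to analyze an arbitrary finitely generated indecomposable projective $P$ by decomposing $L_{\mathbb F}(\Gamma)=\bigoplus_{v\in V}vL_{\mathbb F}(\Gamma)$ and using the explicit basis $\mathcal B$ of Theorem~\ref{basis}. Since $1=\sum_{v\in V}v$ with the $v$ orthogonal idempotents, every finitely generated projective $L$-module is a direct summand of a finite direct sum of the $vL$'s; by Krull--Schmidt-type reasoning (or directly, since $P$ is indecomposable and a summand of $\bigoplus vL$) it suffices first to understand the indecomposable summands of each $vL$. So the first step is: for each $v\in V$, determine how $vL_{\mathbb F}(\Gamma)$ decomposes as a direct sum of indecomposable projectives.

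For this I would argue by ``height'' or by induction down the poset $\leadsto$ on sinks and cycles. The key structural input is the reduction algorithm (Theorem~\ref{Reduced}) together with Proposition~\ref{corner1}/its corollary: after complete reduction we may assume $\Gamma$ is completely reduced, i.e.\ every vertex is either a sink or lies on a loop; but reduction does not literally give the module $vL$ back, so instead I would use Proposition~\ref{hereditary}(ii): if $w$ is a sink then $(w)$ is a graded ideal, $L/(w)\cong L(\Gamma_{/\{w\}})$, and $vL/v(w)\cong vL(\Gamma_{/\{w\}})$. Iterating this, filtering $\Gamma$ by hereditary saturated subsets generated by the minimal sinks/cycles, one expresses $vL$ as an iterated extension whose pieces are controlled by the sinks $w$ (giving free summands $wL$, by Proposition~\ref{Path}(iii) a basis $\{p^*:tp=w\}$, hence each such summand is $wL$ which is indecomposable since $\mathrm{End}(wL)\cong wLw\cong\mathbb F$ by Proposition~\ref{pp^*}(iii), a local ring) and by cycles $C$ with no exit (giving summands $vL$ with $v$ on $C$; here $vLv\cong\mathbb F[x,x^{-1}]$ by Proposition~\ref{Path}(ii), and since the idempotent $v$ cannot be split in $\mathbb F[x,x^{-1}]$, $vL$ is indecomposable). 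The crucial point is that a cycle $C$ \emph{with an exit} contributes no indecomposable projective summand: at a vertex $v$ on such a cycle, relation (CK2) / Fact~\ref{exit} writes $v=\sum_{se=v}ee^*$ nontrivially, and pulling this through the filtration shows $vL$ splits off proper projective pieces supported strictly lower, so no fgip has $v$ in a ``cyclic with exit'' position in its support-top.

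Concretely, the cleanest route: let $\mathcal P$ be the set of isomorphism classes of fgip modules; map $wL\mapsto w$ for sinks $w$, and $vL\mapsto C$ for $v$ on a cycle $C$ with no exit (well-defined on iso classes since the two corner algebras $\mathbb F$ vs.\ $\mathbb F[x,x^{-1}]$ distinguish the two cases, and within the second case $vL\cong v'L$ when $v,v'$ are on the same no-exit cycle via multiplication by the connecting segment, while $vL\not\cong v'L$ for distinct no-exit cycles because the supports $\Gamma_M=\Gamma_{\leadsto v}$ differ — supports are isomorphism invariants by the remarks after Proposition~\ref{teorem}). Surjectivity of each correspondence is clear ($wL$, resp.\ $vL$, are fgip). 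Injectivity uses supports again. The remaining content is exactly the claim that these exhaust all fgip's, i.e.\ that every indecomposable summand of every $vL$ is of one of these two forms, which is the induction sketched above.

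The main obstacle I expect is precisely controlling the indecomposable decomposition of $vL$ for a vertex $v$ that is a loopless non-sink, or lies on a cycle \emph{with} an exit: one must show that iterating (CK2)/Fact~\ref{exit} through a filtration of $\Gamma$ by hereditary saturated subsets actually produces an \emph{internal} direct sum decomposition of $vL$ into copies of $wL$'s and $vL$'s (with $w$ sinks and $v$ on no-exit cycles), rather than just a composition series; equivalently, that the relevant short exact sequences of projectives split. I would handle this either by invoking Theorem~\ref{basis} to write down explicit idempotents/bases realizing the splitting, or by using Theorem~\ref{Ser2} and Proposition~\ref{Path}(iii) at the bottom layer and lifting projectivity one hereditary-saturated layer at a time. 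This is also where finiteness of $\Gamma$ and pairwise-disjointness of the cycles are essential (to make the filtration finite and to keep the corner algebras $\mathbb F$ or $\mathbb F[x,x^{-1}]$ rather than free algebras).
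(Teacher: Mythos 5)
Your treatment of the atoms themselves is essentially the paper's: $wL_{\mathbb{F}}(\Gamma)$ is simple for a sink $w$ (Proposition \ref{Path}(iii)), $vL_{\mathbb{F}}(\Gamma)$ is indecomposable and non-simple for $v$ on a no-exit cycle because $vLv\cong\mathbb{F}[x,x^{-1}]$ and Theorem \ref{Ser2} transports (in)decomposability, and distinct sinks/no-exit cycles are separated by supports. The gap is in the exhaustion step, and it sits exactly where the paper inserts an external citation: the paper invokes \cite[Theorem 3.5]{amp07} (the computation of the nonstable K-theory monoid $V(L(\Gamma))$) to assert that \emph{every} finitely generated projective is isomorphic to $\bigoplus_{i=1}^{n}v_iL$; indecomposability then forces $n=1$, and iterating $vL\cong\bigoplus_{se=v}teL$ forces $\lvert s^{-1}(v)\rvert=1$ at every stage until one hits a sink or closes up a no-exit cycle. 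Your substitute for this --- ``$P$ is an indecomposable summand of $\bigoplus vL$, so it suffices to understand the indecomposable summands of each $vL$'' --- needs an exchange or Krull--Schmidt property that is not available here: $\mathrm{End}(vL)\cong vLv\cong\mathbb{F}[x,x^{-1}]$ is not local when $v$ is on a no-exit cycle, so Azumaya's theorem does not apply, and nothing in the paper provides uniqueness of direct sum decompositions.

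More fatally, the induction you outline cannot terminate. For a vertex $v$ lying over a cycle with an exit, $vL$ is \emph{not} a finite direct sum of indecomposables, so there are no ``explicit idempotents realizing the splitting'' into atoms to be extracted from Theorem \ref{basis}, and no hereditary-saturated filtration will yield an internal finite decomposition. Already for the Toeplitz digraph $\Gamma_4$ of Example \ref{example}, (CK2) gives $vL\cong vL\oplus wL$, hence $vL\cong vL\oplus(wL)^{n}$ for all $n$, while $vL\not\cong(wL)^{n}$ for any $n$ (compare $\mathrm{End}\bigl((wL)^{n}\bigr)\cong M_n(\mathbb{F})$ with the infinite-dimensional $\mathrm{End}(vL)\cong vLv$). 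The theorem survives because such $vL$ are decomposable and hence outside the class being classified, but your strategy needs to decompose them into atoms in order to locate an arbitrary fgip inside $L^{n}$, and that is precisely what fails. To close the argument you need the monoid description of all finitely generated projectives from \cite{amp07} (as the paper does), or some equally strong a priori input.
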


\begin{proof}
If $P$ is a finitely generated projective $L_{\mathbb{F}}(\Gamma)$-module then $P\cong \oplus_{i=1}^n v_iL_{\mathbb{F}}(\Gamma)$ by 
\cite[Theorem 3.5]{amp07}. Since $vL_{\mathbb{F}}(\Gamma) \neq 0$ for all $v \in V$, if $P$ is indecomposable then $n=1$. Either $P\cong wL_{\mathbb{F}}(\Gamma)$ with $w$ a sink or $P\cong vL_{\mathbb{F}}(\Gamma)$ with $ s^{-1}(v)=\{ e \}$, a singleton because $vL_{\mathbb{F}}(\Gamma) \cong \bigoplus_{se=v} teL_{\mathbb{F}}(\Gamma)$. Similarly, either $te$ is a sink or $s^{-1}(te)$ is a singleton. Continuing in this manner we see that either $P\cong wL_{\mathbb{F}}(\Gamma)$ for some sink $w$ or $P\cong vL_{\mathbb{F}}(\Gamma)$ with $v$ on a cycle with no exit (if we don't get to a sink we must have a repetition since $\Gamma$ is finite).\\

Conversely, if $w$ is a sink then $wL_{\mathbb{F}}(\Gamma)$ is simple by Proposition \ref{Path}(iii). If $C$ is a cycle with no exit and $v=sC$ then $M=vL_{\mathbb{F}}(\Gamma)$ is generated by $v \in Mv=vL_{\mathbb{F}}(\Gamma)v$. Theorem \ref{Ser2} implies that  $vL_{\mathbb{F}}(\Gamma)$ is an indecomposable $L_{\mathbb{F}}(\Gamma)$-module because $Mv= vL_{\mathbb{F}}(\Gamma)v$ is an indecomposable $vL_{\mathbb{F}}(\Gamma)v \cong \mathbb{F}[x,x^{-1}]$-module. Similarly, $vL_{\mathbb{F}}(\Gamma)$ is not simple since $\mathbb{F}[x,x^{-1}]$ is not a simple $\mathbb{F}[x,x^{-1}]$-module.\\

Distinct sinks (respectively, distinct cycles with no exits) give non-isomorphic (indecomposable) projective $L_{\mathbb{F}}(\Gamma)$-modules because the sink $w$ (respectively, the cycle $C$) is the unique one in the support $\Gamma_{\leadsto w}$ (respectively,  $\Gamma_{\leadsto C}$). Thus assigning the unique sink (respectively, cycle with no exit) to the fgip gives the one-to-one correspondences. 
 \end{proof}\\

\begin{corollary} \label{Sonuc}
A projective $L_{\mathbb{F}}(\Gamma)$-module $P$ is simple if and only if $P\cong wL_{\mathbb{F}}(\Gamma)$ for some sink $w$, which is the unique sink in $V_P$, the support of $P$. 
\end{corollary}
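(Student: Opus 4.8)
The plan is to derive this corollary directly from Theorem \ref{fgip}, so the only real work is reducing the statement about arbitrary projective modules to the finitely generated indecomposable case. First I would recall that by \cite[Theorem 3.5]{amp07} every projective $L_{\mathbb{F}}(\Gamma)$-module is a direct sum of modules of the form $vL_{\mathbb{F}}(\Gamma)$ with $v \in V$; since $\Gamma$ is finite, each such $vL_{\mathbb{F}}(\Gamma)$ is finitely generated, and iterating the decomposition $vL_{\mathbb{F}}(\Gamma)\cong \bigoplus_{se=v} teL_{\mathbb{F}}(\Gamma)$ as in the proof of Theorem \ref{fgip} writes any $vL_{\mathbb{F}}(\Gamma)$ as a finite direct sum of fgip modules. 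Hence every projective $P$ is a (possibly infinite) direct sum $P\cong\bigoplus_{i\in I} P_i$ of fgip modules, each $P_i$ being $wL_{\mathbb{F}}(\Gamma)$ for a sink $w$ or $vL_{\mathbb{F}}(\Gamma)$ for a $v$ on a cycle with no exit, by Theorem \ref{fgip}.

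Next I would argue the two directions. If $P\cong wL_{\mathbb{F}}(\Gamma)$ for a sink $w$, then $P$ is simple by Proposition \ref{Path}(iii), and $V_P=V_{\leadsto w}$ contains exactly one sink, namely $w$ (any sink $u$ in $V_{\leadsto w}$ satisfies $u\leadsto w$, forcing $u=w$ since $u$ is a sink), which settles the ``if'' part and the uniqueness clause. For the converse, suppose $P$ is a simple projective module. Simplicity forces the index set $I$ above to be a singleton: if $P\cong P_1\oplus P_2\oplus\cdots$ with more than one nonzero summand, each $P_i\neq 0$ would be a proper nonzero submodule, contradicting simplicity. So $P\cong P_1$ is fgip, and by Theorem \ref{fgip} it is either $wL_{\mathbb{F}}(\Gamma)$ for a sink $w$ or $vL_{\mathbb{F}}(\Gamma)$ for $v$ on a cycle with no exit; the latter is non-simple by Theorem \ref{fgip}(ii), so $P\cong wL_{\mathbb{F}}(\Gamma)$ for a sink $w$. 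Finally $w$ is the unique sink in $V_P=V_{\leadsto w}$ as noted above.

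The one genuinely delicate point — and the step I expect to be the main obstacle — is handling the possibly infinitely generated case: I need to know that an arbitrary projective $L_{\mathbb{F}}(\Gamma)$-module decomposes as a direct sum of the fgip building blocks, not merely the finitely generated ones. Since $\Gamma$ is finite, $L_{\mathbb{F}}(\Gamma)$ has a unit $1=\sum_{v\in V}v$, so a projective module is a direct summand of a free module $L_{\mathbb{F}}(\Gamma)^{(\kappa)}\cong\bigoplus_\kappa\bigoplus_{v\in V}vL_{\mathbb{F}}(\Gamma)$; invoking \cite[Theorem 3.5]{amp07} (or the Krull–Schmidt–type statement therein, together with the local-endomorphism-ring property of the fgip summands) gives the required decomposition of $P$ itself. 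If one prefers to avoid that machinery, an alternative for the converse direction is shorter: a simple module is cyclic, hence a quotient of some $vL_{\mathbb{F}}(\Gamma)$; if it is moreover projective the surjection $vL_{\mathbb{F}}(\Gamma)\twoheadrightarrow P$ splits, so $P$ is a direct summand of $vL_{\mathbb{F}}(\Gamma)$, which is finitely generated — now Theorem \ref{fgip} applies directly to the fgip summand isomorphic to $P$. I would use this second route to keep the proof self-contained.
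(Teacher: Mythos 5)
Your proposal is correct and follows essentially the same route as the paper, whose proof is simply the citation of Theorem \ref{fgip} and Proposition \ref{Path}(iii); your second, self-contained argument (simple $\Rightarrow$ cyclic $\Rightarrow$ quotient of some $vL_{\mathbb{F}}(\Gamma)$, and projectivity splits the surjection, so $P$ is a finitely generated indecomposable summand) is exactly the justification the paper leaves implicit. The worry about decomposing arbitrary, possibly infinitely generated projectives is unnecessary here, since simplicity already forces $P$ to be finitely generated and indecomposable, so Theorem \ref{fgip} applies directly.
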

\begin{proof}
This follows from Theorem \ref{fgip} and Proposition \ref{Path}(iii).
\end{proof}\\

For each cycle $C$ let's fix a base vertex $v_C:=sC$. We define $ht(v_C)$ to be $ht(C)$. Let $H_n$ be the hereditary saturated closure of the set of sinks and $\{v_C \> \vert \> ht(C)<n\}$ and $J_n:= (H_n)$, the ideal generated by $H_n$. We define $\Gamma_n$ as the subgraph of $\Gamma$ obtained by deleting all vertices in $H_n$ and all arrows touching them (that is, $\Gamma_n:=\Gamma_{/H_n}$ using our previous notation in Proposition \ref{hereditary}(ii)). Thus we get an ascending filtration of the set of vertices
$$H_0:=\emptyset \subseteq  H_1 \subseteq H_2 \subseteq \cdots \subseteq H_d  \subset  H_{d+1}=V$$

and an ascending filtration of the algebra $L_{\mathbb{F}}(\Gamma)$ by ideals
$$ J_0:=0 \leq J_1 \leq J_2 \leq \cdots \leq J_d  < J_{d+1}= L_{\mathbb{F}}(\Gamma)$$

 and a descending filtration of $\Gamma$ by the subgraphs:
 
$$\Gamma \supseteq \Gamma_1 \supseteq \>\> \Gamma_2 \supseteq  \> \cdots \> \supseteq \Gamma_d \supset \Gamma_{d+1}:=\emptyset .$$

 Note that $L_{\mathbb{F}}(\Gamma_n)\cong L_{\mathbb{F}}(\Gamma)/ J_n $ by Proposition \ref{hereditary}(ii). Also 
 $H_n= J_n\cap V $ since if $v\in V\setminus H_n$ then its image in $L_{\mathbb{F}}(\Gamma)/ J_n \cong L_{\mathbb{F}}(\Gamma_n)$ is nonzero. The intersection of the ideal generated by $\{ w \in V \>\vert \> w \text{ a sink } \} \cup \{v_C \> \vert \> ht(C) < n\}$ with $V$ is a hereditary saturated subset of $V$ by Proposition \ref{hereditary}(i), containing its hereditary saturated closure, i.e., $H_n$. Hence $J_n$ is generated by $\{ w \in V \>\vert \> w \text{ a sink } \} \cup \{v_C \> \vert \> ht(C) < n\}$ for $n>0$. \\

If $C$ is a cycle in $\Gamma$ with $ht(C)=n$ then $v_C=sC$ is not in $H_n$ since $C^{\infty}$ is an infinite path that does not meet any sink or a successor of $\{ v_D \> \vert \> ht(D) < n  \}$ by the discussion preceding Proposition \ref{hereditary}.
If $ht(\Gamma)=d>0$ then there is a maximal cycle $C$ in $\Gamma$ with $ht(C)=d$, so $H_d \neq V =H_{d+1}$. Similarly, $H_n \neq H_{n+2}$ hereditary for $0\leq n < d$, however $H_n=H_{n+1}$ is a possibility (for instance, for every $n\in \mathbb{N}$ there is a quantum sphere $qS^d$ given in Example \ref{ornek} with $H_n=H_{n+1}$). Also, $\Gamma_d $ is non-empty because it has $v_C$ for some cycle $C$ with $ht(C)=d=ht(\Gamma)$ and $\Gamma_n \neq \Gamma_{n+2}$ for $0\leq n < d$ but $\Gamma_n = \Gamma_{n+1}$ is possible.\\

If $C$ is a cycle with $ht(C)=n$ then $v_C$ is a vertex of $\Gamma_n$ because $v_C \notin H_n$. Hence $v_C$ in $L_{\mathbb{F}}(\Gamma_n) \cong L_{\mathbb{F}}(\Gamma)/ J_n $ is nonzero, that is, $v_C \notin J_n$. As above,  $J_d \neq  L_{\mathbb{F}}(\Gamma)$ and $J_n\neq J_{n+2}$  for $0\leq n< d$  but $J_n=J_{n+1}$ is possible. $ GK dim L_{\mathbb{F}}(\Gamma)=d$ by Theorem \ref{height}.\\   

If $te$ is in $H_n$ for all $e \in E$ with $se=v$  then $v \in H_n$ because $H_n$ is saturated. Hence a minimal vertex of $\Gamma_n$ must be on a cycle, i.e., $\Gamma_n$ has no sinks for $n>0$. The cycles with no exit in $\Gamma_n$ for $n>0$, come from the cycles of $\Gamma$ of height $n$ or $n+1$ by the definition of height.

\begin{remark}
Our ascending chain of ideals is a refinement of the chain of ideals defined in \cite{aajz13} since $I_m=J_{2m}$.  
\end{remark}

Next we define a descending filtration by Serre subcategories of the category of  $L\Gamma := L_{\mathbb{F}}(\Gamma)$-modules: \\

Let $\mathfrak{M}_\Gamma^0 := Mod_{L\Gamma}$ and    
$\mathfrak{M}_\Gamma^1$ be the full subcategory of  $Mod_{L\Gamma}$ with objects $M$ such that $Hom^{L\Gamma}(P,M)=0 $ for all simple projective modules $P$ in  $Mod_{L\Gamma}$. Hence $Hom^{L\Gamma}(wL\Gamma, M)=0$ for all sinks $w$ by Corollary \ref{Sonuc}. Also, $Mw \cong Hom^{L\Gamma}(wL\Gamma, M)$ as explained in the proof of Theorem \ref{Ser2} and so $Mw=0$ for all sinks $w$. By definition, the ideal $J_1$ annihilates all such $L\Gamma$-modules $M$ and all $M$ annihilated by $J_1$ is in $\mathfrak{M}_\Gamma^1$ since $J_1$ is generated by the sinks in $\Gamma$. Thus $\mathfrak{M}_\Gamma^1$ may be identified with the category of $L\Gamma_1\cong L\Gamma /J_1$-modules. We can recover $J_1$ from $\mathfrak{M}_\Gamma^1$ as the intersection of the annihilators of all $M$ in $\mathfrak{M}_\Gamma^1$ since $L\Gamma/J_1$ is in $\mathfrak{M}_\Gamma^1$ and  $Ann(L\Gamma/J_1)=J_1$. \\

Let $\mathfrak{M}_{\Gamma}^{2}$ be the full subcategory of  $\mathfrak{M}_{\Gamma}^{1}$ with objects $M$ such that $Hom^{L\Gamma}(P,M)=0 $ for all finitely generated indecomposable projectives $P$ in  $Mod_{L\Gamma} =\mathfrak{M}_{\Gamma}^{0}$. Similar to the discussion above the modules $M$ in $\mathfrak{M}_{\Gamma}^{2}$ are those with $Mv\cong Hom^{L\Gamma}(vL\Gamma, M)=0$ for all $v_C$ on all cycles $C$ with no exit by Proposition \ref{fgip}(ii) and
$Mw=0$ for all sinks $w$, since $\mathfrak{M}_{\Gamma}^{2}$ is a subcategory of $\mathfrak{M}_{\Gamma}^{1}$. Also 
$\mathfrak{M}_\Gamma^2$ may be identified with the category of $L\Gamma_2\cong L\Gamma /J_2$-modules. We can recover $J_2$ from $\mathfrak{M}_\Gamma^2$ as the intersection of the annihilators of all $M$ in $\mathfrak{M}_\Gamma^2$ since $L\Gamma/J_2$ is in $\mathfrak{M}_\Gamma^2$ and  $Ann(L\Gamma/J_2)=J_2$.\\

Let $\mathfrak{M}_{\Gamma}^{n+1}$ for $n>0$ be the full subcategory of  $\mathfrak{M}_{\Gamma}^{n}$ with objects $M$ such that $Hom^{L\Gamma}(P,M)=Hom^{L\Gamma_{n-1}}(P,M)=0 $ for all finitely generated indecomposable projectives $P$ in  $\mathfrak{M}_{\Gamma}^{n-1}$. As above, $\mathfrak{M}_\Gamma^{n+1}$ may be identified with the category of $L\Gamma_{n+1}\cong L\Gamma /J_{n+1}$-modules. We can recover $J_{n+1}$ from $\mathfrak{M}_\Gamma^{n+1}$ as the intersection of the annihilators of all $M$ in $\mathfrak{M}_\Gamma^{n+1}$ since $L\Gamma/J_{n+1}$ is in $\mathfrak{M}_\Gamma^{n+1}$ and  $Ann(L\Gamma/J_{n+1})=J_{n+1}$.\\

If $P$ is a projective module then $Hom(P, \underline{\>\>})$ is an exact functor, therefore $\mathfrak{M}_{\Gamma}^n$ is a Serre subcategory of $Mod_{L\Gamma}$ for all $n$. Since $\mathfrak{M}_{\Gamma}^n$ is identified with $Mod_{L\Gamma /J_n}$  we have a finite descending filtration of Serre subcategories: \\

$\mathfrak{M}_{\Gamma}^0 \supseteq \mathfrak{M}_{\Gamma}^1 \supseteq \cdots \supseteq \mathfrak{M}_{\Gamma}^d \supset \mathfrak{M}_{\Gamma}^{d+1}= \textbf{0} \qquad $
where $d=GKdim L(\Gamma)$\\

An object of a Serre subcategory $\mathfrak{C}$ is simple or indecomposable or finitely generated  in the ambient category if and only if it is so in $\mathfrak{C}$. However, projectives in $\mathfrak{C}$ may not be projective in the ambient category. In fact, the finitely generated projective modules  that were necessary to define $\mathfrak{M}_{\Gamma}^n $ for all $n>2$ are not projective as $L\Gamma$-modules, but they are projective in $\mathfrak{M}_{\Gamma}^{n-2} $, that is, as $L\Gamma/J_{n-2}$-modules. \\

Note that $v_C$ is in $H_n$ if and only if all the vertices on $C$ are  in $H_n$, because $H_n$ is hereditary. Therefore $H_n$, so $\Gamma_n$, $J_n$ and $\mathfrak{M}_{\Gamma}^n$ are all independent of the choices of the base vertices $v_C$ for all cycles $C$ in $\Gamma$.\\

\subsection{Morita Invariants of $L_{\mathbb{F}}(\Gamma)$}

Let $U_0:=U_0(\Gamma)$ be the isomorphism classes of simple projectives in $\mathfrak{M}_{\Gamma}^0 =Mod_{L\Gamma}$, let $U_1:=U_1(\Gamma)$ be the isomorphism classes of nonsimple finitely generated indecomposable projectives in $\mathfrak{M}_{\Gamma}^0$ and when $n>0$ let 
 $U_{n+1}:=U_{n+1}(\Gamma)$ be the isomorphism classes of finitely generated indecomposable projectives in $\mathfrak{M}^n_{\Gamma}$ that are  not in $U_n:=U_n(\Gamma)$.\\

We have an ascending chain of finite sets:
 $$U_0 \subseteq U_0 \cup U_1 \subseteq \cdots \subseteq U_0\cup U_1 \cup \cdots \cup U_d $$
Applying Theorem \ref{fgip} to $L\Gamma /J_n$ for each $n$ we get 
 a one-to-one correspondence between $U_0$ and the sinks in $\Gamma$, also $U_1$ and the cycles with no exits in $\Gamma$, in general, $U_n$ and the cycles of height $n$ in $\Gamma$ for $1\leq n\leq d$. \\

  \begin{remark}
  If $u$ and $v$  are two vertices on a cycle $C$ of height $n$ then $uL\Gamma/uI_n \cong uL\Gamma_n \cong vL\Gamma_n \cong vL\Gamma/vI_n $ and this isomorphism class is a unique element of $U_n$. The middle isomorphism holds because $C$ is a cycle with no exit in $\Gamma_n$: the homomorphism $teL\Gamma_n \stackrel{e \cdot}{\longrightarrow} seL\Gamma_n$ is an isomorphism with inverse $e^* \cdot$ when $e$ is an arrow on $C$, because $e^*e=te$ by (CK1) and $ee^*=se$ by (CK2) since $C$ has no exit in $\Gamma_n$. However, $uL\Gamma$ may not be isomorphic to $vL\Gamma$ as the following example shows. 
 \end{remark}

\begin{example}

$$  
 \xymatrix{ {\bullet}^{x}& \ar [l] {\bullet}^{u} \ar@/^0.6pc/[r] & {\bullet}^{v} \ar [r]    \ar@/^0.5pc/[l] &{\bullet}^{z}} $$ \\

\noindent
$[uL\Gamma]\neq [vL\Gamma] $ in $K_0(L\Gamma)$, hence $uL\Gamma \ncong vL\Gamma$.
\end{example}

 The subcategories $\mathfrak{M}_{\Gamma}^n$ of $Mod_{L_{\mathbb{F}}(\Gamma)}$ are defined completely category theoretically since simple, projective, indecomposable and finitely generated are categorically concepts. That is, if ${L_{\mathbb{F}}(\Gamma)}$ and ${L_{\mathbb{F}}(\Lambda)}$ are Morita equivalent then a functor giving an equivalence between $Mod_{L_{\mathbb{F}}(\Gamma)}$ and $Mod_{L_{\mathbb{F}}(\Lambda)}$ will send $\mathfrak{M}_{\Gamma}^n$ to $\mathfrak{M}_{\Lambda}^n$. Consequently, the ideals $J_n$ are also Morita invariants in the sense that they can be recovered from these subcategories. This generalizes the result in \cite{aajz13}
 stating that $J_{2m}=I_m$ are invariant under isomorphisms. \\

 The sets of isomorphism classes of finitely generated indecomposable modules $U_n$, corresponding to the sinks and cycles of $\Gamma$, equivalently the vertices of the  complete reduction of $\Gamma$, are also defined categorically. Therefore the vertices of the complete reduction of $\Gamma$ can be recovered from  $Mod_{L_{\mathbb{F}}(\Gamma)}$, hence they are also Morita invariants. \\

We also get a refinement of the  Gelfand-Kirillov dimension for Leavitt path algebras of polynomial growth as a Morita invariant.
\begin{theorem}\label{polinom}
If  $\Gamma$ is a finite digraph whose cycles are pairwise disjoint and $\mathbb{F}$ is a field then the polynomial $$G_{\Gamma}(z)=a_0+a_1z+a_2z^2+\cdots +a_dz^d$$ is a Morita invariant of $L_{\mathbb{F}}(\Gamma)$ and $deg G_{\Gamma}(z)= GKdimL_{\mathbb{F}}(\Gamma) $ where $a_0$ is the number of sinks in $\Gamma$ and $a_i$ is the number of 
cycles in $\Gamma$ of height $i$ for $i>0$. 
\end{theorem}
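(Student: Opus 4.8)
The plan is to recognize the theorem as a repackaging of the filtration $\mathfrak{M}_{\Gamma}^{0}\supseteq\mathfrak{M}_{\Gamma}^{1}\supseteq\cdots\supseteq\mathfrak{M}_{\Gamma}^{d+1}=\textbf{0}$ by Serre subcategories constructed above, together with Theorem \ref{fgip}. First I would record that $a_{i}=|U_{i}(\Gamma)|$ for every $i$. For $i=0$ this is Theorem \ref{fgip}(i) (simple projectives correspond to sinks) and for $i=1$ it is Theorem \ref{fgip}(ii) (nonsimple fgip correspond to cycles with no exit, i.e.\ cycles of height $1$). For $i\geq 2$ one argues by induction on $i$: the Serre subcategory $\mathfrak{M}_{\Gamma}^{i-1}$ is identified with $Mod_{L_{\mathbb{F}}(\Gamma_{i-1})}$, the digraph $\Gamma_{i-1}$ has no sinks, and by the discussion preceding the theorem its cycles with no exit are exactly the cycles of $\Gamma$ of height $i-1$ (already accounted for in $U_{i-1}$) together with the cycles of $\Gamma$ of height $i$ (which first lose all their exits in $\Gamma_{i-1}$); applying Theorem \ref{fgip} to $L_{\mathbb{F}}(\Gamma_{i-1})$ therefore matches $U_{i}$ bijectively with the set of cycles of $\Gamma$ of height $i$. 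Hence $G_{\Gamma}(z)=\sum_{i=0}^{d}|U_{i}(\Gamma)|\,z^{i}$.

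Next I would prove Morita invariance. Since ``simple'', ``indecomposable'', ``finitely generated'', ``projective'' and the vanishing of a Hom-functor are all category-theoretic properties, any equivalence $Mod_{L_{\mathbb{F}}(\Gamma)}\simeq Mod_{L_{\mathbb{F}}(\Lambda)}$ carries $\mathfrak{M}_{\Gamma}^{0}$ onto $\mathfrak{M}_{\Lambda}^{0}$ and then, by induction on $n$, carries $\mathfrak{M}_{\Gamma}^{n}$ onto $\mathfrak{M}_{\Lambda}^{n}$. The only point needing care is that in the recursive definition of $\mathfrak{M}_{\Gamma}^{n+1}$ the word ``projective'' means ``projective object of the Serre subcategory $\mathfrak{M}_{\Gamma}^{n-1}$'', not of the ambient category; but $\mathfrak{M}_{\Gamma}^{n-1}$ is itself defined purely categorically, so the entire tower is intrinsic to $Mod_{L_{\mathbb{F}}(\Gamma)}$ and is preserved by the equivalence. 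Consequently the equivalence restricts to a bijection $U_{n}(\Gamma)\to U_{n}(\Lambda)$ of finite sets for each $n$, so $a_{i}(\Gamma)=a_{i}(\Lambda)$ for all $i$ and therefore $G_{\Gamma}(z)=G_{\Lambda}(z)$.

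Finally, for the assertion on the degree: $a_{i}=0$ once $i>d=ht(\Gamma)$ because no cycle of $\Gamma$ has height exceeding $ht(\Gamma)$, while $a_{d}\neq 0$ --- if $d>0$ there is a cycle of height $d$ by the definition of $ht(\Gamma)$, and if $d=0$ then $\Gamma$ is acyclic and $a_{0}$ counts its sinks, of which any nonempty finite acyclic digraph has at least one. Thus $\deg G_{\Gamma}(z)=d=ht(\Gamma)=GKdim\,L_{\mathbb{F}}(\Gamma)$ by Theorem \ref{height}. I do not expect a serious obstacle here: essentially all of the content already resides in Theorem \ref{fgip} and in the construction of the filtration $\mathfrak{M}_{\Gamma}^{\bullet}$, and the main (modest) task is the bookkeeping showing that the recursion defining the $\mathfrak{M}_{\Gamma}^{n}$ is genuinely category-theoretic and that passing to the quotients $L_{\mathbb{F}}(\Gamma)/J_{n}\cong L_{\mathbb{F}}(\Gamma_{n})$ identifies $U_{n}$ with the cycles of height $n$.
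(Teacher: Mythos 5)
Your proposal is correct and follows essentially the same route as the paper: the paper's proof is the one-line observation that $a_n=\vert U_n\vert$, relying on the preceding discussion (the categorical definition of the filtration $\mathfrak{M}_\Gamma^\bullet$, the identification $\mathfrak{M}_\Gamma^n\simeq Mod_{L\Gamma/J_n}$, Theorem \ref{fgip} applied to each quotient, and Theorem \ref{height} for the degree), all of which you have reconstructed and spelled out. The one point you elaborate beyond the paper --- that cycles with no exit in $\Gamma_{i-1}$ are exactly those of height $i-1$ or $i$, so that $U_i$ picks out precisely the height-$i$ cycles --- is exactly the bookkeeping the paper leaves implicit.
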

\begin{proof} This follows from the preceding discussion since $a_n= \vert U_n \vert$.
\end{proof}
\begin{example}
$$\qquad \>  qD^2 : \quad \xymatrix{ {\bullet} \ar@(ul,ur)
 \ar@{->}[r]  & \bullet  } \qquad  \quad \quad \qquad G_{qD^2}(z)=1+z^2$$\\
$$\qquad qS^5 : \quad \xymatrix{ {\bullet} \ar@(ul,ur)
 \ar@{->}[r]   &{\bullet} \ar@(ul,ur)
 \ar@{->}[r]    &  \bullet \ar@(ul,ur) } \qquad \quad \quad \quad G_{qS^5}(z)=z+z^3+z^5$$
\end{example}

 In order to recover the partial order $\leadsto$ on the vertices of the complete reduction of $\Gamma$, equivalently the sinks and the cycles of $\Gamma$, as well as some of the arrows of the complete reduction of $\Gamma$, we need to use the structure of simple $L(\Gamma)$-modules and their extensions in Section 4.\\

 The {\bf height of a simple $L\Gamma$-module} $M$, denoted by $ht(M)$, is  the largest $n$ such that it is in $\mathfrak{M}_{\Gamma}^n$. A simple module $M$ has $ht(M)=0$ if and only if $M\cong wL\Gamma$ for some sink $w$. If $C$ is a cycle in $\Gamma$ with $n=ht(C)$ then $M_C^{\lambda}:=v_CL\Gamma/(v_C-\lambda C^*)L\Gamma \cong v_CL\Gamma_n/(v_C-\lambda C^*)L\Gamma_n $ for  $0\neq \lambda \in \mathbb{F}$ is a simple module with $V_{M_C^{\lambda}}= V_{\leadsto C}$ by Corollary \ref{Classification 2}, hence  $ht(M_C^{\lambda})=n$. There is a unique isomorphism class $[P_C]$ in $U_n$, corresponding to the cycle $C$,
 with $Hom^{L\Gamma}(P_C, M_C^{\lambda})\neq 0$. (We can associate a unique sink or a cycle to each simple $L\Gamma$-module, but there are infinitely many simples associated to each cycle corresponding to the irreducible polynomials in $\mathbb{F}[x]$ with constant term 1.) If $M_D$ is another simple module defined by the cycle $D$ 
 then $C\leadsto D$ if and only if $Ext (M_C^{\lambda}, M_D^{\lambda})\neq 0$; If $w$ is a sink then $C\leadsto w$ if and only if $Ext(M_C^{\lambda}, wL\Gamma)\neq 0$ by Theorem \ref{katlilik}. \\

If $M=v_CL\Gamma / f(C^*)L\Gamma$ is simple then $(deg f(x))^2= (dim^{\mathbb{F}}(Mv_C))^2=dim^{\mathbb{F}} Ext (M, M)$ by Theorem \ref{katlilik}, hence $deg f(x)$ is determined categorically and so it is a Morita invariant. When $deg f(x)=1$, so $f(x)=1-\lambda x$, the defining polynomial of the simple module corresponding to $M_C^{\lambda}$ via an equivalence of categories must be $1-\mu x$ but $\mu $ may not equal $\lambda $. In fact, using a Morita equivalence given by a gauge isomorphism sending $C$ to a scalar of multiple of $C$ will interchange any nonzero $\lambda$ and $\mu$. \\

Let's consider finite digraphs $\Gamma$ and $\Lambda$ whose cycles are pairwise disjoint, with $L_{\mathbb{F}}(\Gamma)$ Morita equivalent to $L_{\mathbb{F}}(\Lambda)$. We may assume that $\Gamma$ and $\Lambda$ are completely reduced by Theorem \ref{Reduced}, so the vertices of $\Gamma$ are either sinks or $v_C$ which is the "base" vertex of the loop $C$. If $\mathcal{F}$ is a functor from $Mod_{L\Gamma}$ to  $Mod_{L\Lambda}$ giving a Morita equivalence then $\mathcal{F}$ maps $\mathfrak{M}_{\Gamma}^n$ to $\mathfrak{M}_{\Lambda}^n$ and $U_n(\Gamma)$ to $U_n(\Lambda)$ as explained above. The sinks in $\Gamma$ correspond to the isomorphism classes of simple projective $L_{\mathbb{F}}(\Gamma)$-modules, that is, elements of $U_0(\Gamma)$. Similarly, $\mathcal{F} (U_0(\Gamma))=U_0(\Lambda)$ corresponds to the sinks in $\Lambda$, yielding a one-to-one correspondence between the sinks of $\Gamma$ and the sinks of $\Lambda$. Each element of $U_n(\Gamma)$ corresponds to a unique cycle in $\Gamma$, similarly yielding a one-to-one correspondence between the cycles of height $n$ in $\Gamma$ and the cycles of height $n$ in $\Lambda$ via $\mathcal{F}$. For each cycle $C$ in $\Gamma$ identified with $[P_C] \in U_n(\Gamma)$, we pick a simple module $M$ associated to $C$ with $dim^{\mathbb{F}} (Mv_C)=1$, for instance $M_C=v_CL\Gamma /(v_C- C^*)L\Gamma$. Since $Hom^{L\Gamma}(P_C, M) \cong Hom^{L\Lambda}(\mathcal{F}(P_C), \mathcal{F}(M))$ we see that $\mathcal{F}(M)$ is associated with $[\mathcal{F}(P_C)]$. Also $Ext(M,M) \cong Ext (\mathcal{F}(M), \mathcal{F}(M))$ and so $dim^{\mathbb{F}} Ext(\mathcal{F}(M), \mathcal{F}(M))=dim^{\mathbb{F}}Ext(M,M)=1$. Thus $\mathcal{F}(M)$ is defined by a polynomial of degree 1 by Theorem \ref{katlilik}. Using Theorem \ref{katlilik} again, we see that the poset of the sinks and the cycles in $\Gamma$ (under $\leadsto$) is isomorphic to the poset of the sinks and the cycles in $\Lambda$.\\

We have seen that the Hasse diagram of the partial order $\leadsto$ on the sinks and the cycles of $\Gamma$ is a Morita invariant. We can do better: If the cycle $C$ in $\Gamma$ covers the cycle $D$ or the sink $w$ then there is no other cycle in between, so $v_C$ can only connect to $v_D$ or $w$ via paths of length 1, that is, arrows, in the complete reduction of $\Gamma$. The {\bf weighted Hasse diagram} of $\Gamma$ is the Hasse diagram of the poset of sinks and cycles in $\Gamma$ with each sink marked (to distinguish sinks from cycles with no exits) and each edge corresponding to  $v_C$ covering $v_D$, respectively the sink $w$, labelled with the number of arrows from $v_C$ to $v_D$, respectively to $w$. This number is $dim^{\mathbb{F}}Ext (M_C, M_D)= dim^{\mathbb{F}}Ext (\mathcal{F}(M_C), \mathcal{F}(M_D))$ or $dim^{\mathbb{F}}Ext (M_C, wL\Gamma)= dim^{\mathbb{F}}Ext (\mathcal{F}(M_C), \mathcal{F}(wL\Gamma))$. This discussion proves:

\begin{theorem} \label{Hasse}
If $\mathbb{F}$ is a field and $\Gamma$ is a finite digraph whose cycles are pairwise disjoint then the weighted Hasse diagram of $\Gamma$ is a Morita invariant of $L_{\mathbb{F}}(\Gamma)$. 
\end{theorem}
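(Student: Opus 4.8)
The plan is to take an arbitrary equivalence of categories $\mathcal{F}\colon Mod_{L_{\mathbb{F}}(\Gamma)}\to Mod_{L_{\mathbb{F}}(\Lambda)}$, where $\Gamma$ and $\Lambda$ are finite digraphs with pairwise disjoint cycles, and to manufacture from it an isomorphism of weighted Hasse diagrams. Since the weighted Hasse diagram of a digraph is by definition read off from its (unique) complete reduction, I would first invoke Theorem \ref{Reduced} to assume $\Gamma$ and $\Lambda$ are already completely reduced, so that every vertex is a sink or the base vertex $v_C$ of a single loop $C$, with $ht(v_C)=ht(C)$.

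The first step recovers the marked, height-graded vertex set. The Serre subcategories $\mathfrak{M}_{\Gamma}^{n}$, the ideals $J_n$, and the finite sets $U_n=U_n(\Gamma)$ were defined using only the categorical notions \emph{simple}, \emph{projective}, \emph{indecomposable}, \emph{finitely generated} and vanishing of $Hom$, so $\mathcal{F}$ carries $\mathfrak{M}_{\Gamma}^{n}$ onto $\mathfrak{M}_{\Lambda}^{n}$ and restricts to a bijection $U_n(\Gamma)\to U_n(\Lambda)$ for each $n$. By Theorem \ref{fgip} applied to $L\Gamma/J_n$ and Corollary \ref{Sonuc}, $U_0$ is in canonical bijection with the sinks and $U_n$ ($n>0$) with the cycles of height $n$; the sinks are exactly the classes in $U_0$ (those with a simple representative), so the marking is categorical. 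Hence $\mathcal{F}$ induces a height-preserving bijection from the vertices of the complete reduction of $\Gamma$ to those of $\Lambda$ that matches sinks with sinks, matching a cycle $C$ of height $n$ with the cycle $C'$ of $\Lambda$ for which $\mathcal{F}(P_C)\cong P_{C'}$.

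The second step recovers the covering relation together with the edge weights. For a cycle $C$ I would fix the simple module $M_C:=v_CL\Gamma/(v_C-C^*)L\Gamma$ of Corollary \ref{Classification 2}; it has support $V_{\leadsto C}$ and $dim^{\mathbb{F}}(M_Cv_C)=1=dim^{\mathbb{F}}Hom^{L\Gamma}(P_C,M_C)$. Since $\mathcal{F}$ preserves $Hom$ and $Hom^{L\Lambda}(P_{C'},N)$ is the stalk $Nv_{C'}$ for $N$ killed by the appropriate $J_m$, the module $\mathcal{F}(M_C)$ is again simple, lies in $\mathfrak{M}_{\Lambda}^{n}\setminus\mathfrak{M}_{\Lambda}^{n+1}$, is attached to $[P_{C'}]$, and has one-dimensional stalk at $v_{C'}$; by Corollary \ref{Classification 2} it is therefore $v_{C'}L\Lambda/(v_{C'}-\mu C'^{*})L\Lambda$ for some $\mu\neq 0$, i.e. a permissible choice of $M_{C'}$. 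Now for cycles $C,D$ (respectively a cycle $C$ and a sink $w$), the easy cases (ii), (iii) and Theorem \ref{katlilik} give
\[
dim^{\mathbb{F}}Ext(M_C,M_D)=
\begin{cases}
0 & \text{if } C\not\leadsto D,\\
\infty & \text{if } C\leadsto D \text{ with a cycle strictly in between},\\
|Q_{v_D}^{v_C}| & \text{if } C \text{ covers } D,
\end{cases}
\]
and likewise $dim^{\mathbb{F}}Ext(M_C,wL\Gamma)$ is $0$, $\infty$, or $|Q_{w}^{v_C}|$; in the covering cases (where $deg f_C=deg f_D=1$) these finite numbers are exactly the numbers of arrows $v_C\to v_D$, respectively $v_C\to w$, in the complete reduction, i.e. the labels of the weighted Hasse diagram. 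As $\mathcal{F}$ preserves $Ext$-groups and their $\mathbb{F}$-dimensions and sends $wL\Gamma$ to $w'L\Lambda$ and $M_C$ to a permissible $M_{C'}$, the covering relation and all labels transfer verbatim, which together with the first step gives the asserted isomorphism of weighted Hasse diagrams.

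The part I expect to be the main obstacle is the last display. One must check that \emph{covering}, as opposed to mere $\leadsto$-comparability, is pinned down by the \emph{simultaneous} non-vanishing and finite-dimensionality of $Ext(M_C,M_D)$, so that the Hasse \emph{edges} (not merely the partial order) are categorical; and one must check that the weight $dim^{\mathbb{F}}Ext(M_C,M_D)$ is independent of which degree-one simple modules are chosen to represent $C$ and $D$ on either side (it depends only on the degrees $deg f_C=deg f_D=1$ and on $|Q_{v_D}^{v_C}|$). Both points rest on Theorem \ref{katlilik} and the preliminary case analysis (i)--(iv), and that is precisely where the bookkeeping must be done carefully.
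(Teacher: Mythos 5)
Your proposal is correct and follows essentially the same route as the paper: reduce to completely reduced digraphs via Theorem \ref{Reduced}, recover the height-graded vertices from the $U_n$ and the filtration $\mathfrak{M}_{\Gamma}^{n}$, and then use the degree-one simple modules $M_C$ together with Theorem \ref{katlilik} to read off the covering relation and the edge labels from $Ext$-dimensions. The only (harmless) variation is that you detect $\deg f=1$ for $\mathcal{F}(M_C)$ via $\dim Hom(P_{C'},\mathcal{F}(M_C))=1$, whereas the paper uses $\dim Ext(M_C,M_C)=(\deg f)^2=1$; both are categorical and equivalent here.
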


Note that the weighted Hasse diagram depends only on the digraph $\Gamma$ and is independent of the coefficient field $\mathbb{F}$.

\subsection{Classification of $L_{\mathbb{F}}(\Gamma)$ with $GKdim L_{\mathbb{F}}(\Gamma)< 4$ }

 An arrow $e$ in a completely reduced finite digraph whose cycles are pairwise disjoint is called a {\bf shortcut} if there is a vertex  strictly between $se$ and $te$, equivalently if $se$ does not cover $te$ with respect to $\leadsto$. In the previous subsection we saw how to recover the vertices, the sinks, the cycles (=loops) and all arrows that are not shortcuts 
 of the complete reduction of a finite digraph $\Gamma$ whose cycles are pairwise disjoint from $Mod_{L\Gamma}$. This is exactly the information contained in the weighted Hasse diagram of $\Gamma$. If $GKdim L\Gamma <4$ then there is no room for shortcuts in the complete reduction of $\Gamma$ by Theorem \ref{height}.

 \begin{theorem} \label{<4}
If $\Gamma$ and $\Lambda$ are finite digraphs whose cycles are pairwise disjoint and $GKdim L\Gamma <4$ then $L\Gamma$ and $L\Lambda$ are Morita equivalent if and only if their complete reductions are isomorphic digraphs.
 \end{theorem}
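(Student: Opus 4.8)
The plan is to prove both implications, the forward one being immediate from the invariance of Morita type under reduction, and the reverse one resting on the fact that a completely reduced digraph of height at most $3$ has no shortcuts, so that its weighted Hasse diagram already carries all of its combinatorial data.

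\textbf{The ``if'' direction.} Suppose the complete reduction $\Delta$ of $\Gamma$ is isomorphic to the complete reduction of $\Lambda$. Since $\Delta$ is a reduction of $\Gamma$, Theorem \ref{Reduced} gives that $L_{\mathbb{F}}(\Gamma)$ and $L_{\mathbb{F}}(\Delta)$ are Morita equivalent, and likewise $L_{\mathbb{F}}(\Lambda)$ is Morita equivalent to $L_{\mathbb{F}}(\Delta)$. Morita equivalence being transitive, $L_{\mathbb{F}}(\Gamma)$ and $L_{\mathbb{F}}(\Lambda)$ are Morita equivalent.

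\textbf{The ``only if'' direction, reduction to a combinatorial claim.} Assume $L_{\mathbb{F}}(\Gamma)$ and $L_{\mathbb{F}}(\Lambda)$ are Morita equivalent. By Theorem \ref{Reduced} we may replace $\Gamma$ and $\Lambda$ by their complete reductions without changing the Morita equivalence class, and these are unique up to isomorphism since the cycles are pairwise disjoint; so assume $\Gamma$ and $\Lambda$ are completely reduced. By Theorem \ref{polinom} the polynomial $G_{\Gamma}(z)$ is a Morita invariant, so $GKdim L_{\mathbb{F}}(\Lambda) = \deg G_{\Lambda}= \deg G_{\Gamma}= GKdim L_{\mathbb{F}}(\Gamma) < 4$; hence $ht(\Gamma), ht(\Lambda) \leq 3$ by Theorem \ref{height}. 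By Theorem \ref{Hasse} the weighted Hasse diagrams of $\Gamma$ and $\Lambda$ are isomorphic. It therefore suffices to show that a completely reduced finite digraph $\Omega$ with pairwise disjoint cycles and $ht(\Omega) \leq 3$ is determined up to isomorphism by its weighted Hasse diagram.

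\textbf{No shortcuts.} Suppose, for contradiction, that some arrow $e$ of $\Omega$ is a shortcut, so $se \neq te$, $se \leadsto te$, and there is a vertex $u \notin \{se, te\}$ with $se \leadsto u \leadsto te$. In $\Omega$ every vertex is a sink or lies on a loop, and by pairwise disjointness that loop is unique. Since $se$ is not a sink it lies on a loop $C$ with $ht(C)=ht(se)$, and as $te \neq se$ the arrow $e$ is an exit of $C$. The vertex $u$ is not a sink, for a sink is minimal under $\leadsto$ and $u \leadsto te$ would force $u = te$; so $u$ lies on a loop $D \neq C$. Since $u \leadsto te$ with $te \neq u$ and $te$ is not on $D$, the loop $D$ has an exit, and hence $ht(D) = 2 + \max\{ht(x) \mid D \leadsto x,\ D \neq x\} \geq 2$. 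Because $C$ has an exit and $C \leadsto D$ with $C \neq D$, we get $ht(C) = 2 + \max\{ht(x) \mid C \leadsto x,\ C \neq x\} \geq 2 + ht(D) \geq 4$, contradicting $ht(\Omega) \leq 3$. Thus $\Omega$ has no shortcuts.

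\textbf{Reconstruction and conclusion.} Since $\Omega$ has no shortcuts, every non-loop arrow joins a vertex to a vertex it covers under $\leadsto$, and the number of arrows from $v_C$ to $v_D$ (when $v_C$ covers $v_D$, respectively from $v_C$ to a sink $w$ it covers) is exactly the label on the corresponding edge of the weighted Hasse diagram. Moreover each non-sink vertex of $\Omega$ carries exactly one loop and each sink none, and the marking of the weighted Hasse diagram records which vertices are sinks. Hence $\Omega$ is recovered from its weighted Hasse diagram by taking the diagram's vertices, attaching one loop at each unmarked vertex, and inserting along each covering edge as many arrows as its label. Consequently $\Gamma$ and $\Lambda$, having isomorphic weighted Hasse diagrams, have isomorphic complete reductions, completing the proof. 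The main obstacle is the no-shortcut lemma: one must handle the recursive definition of height carefully, in particular checking that the ``interior'' vertex $u$ really lies on a loop possessing an exit, so that the estimate $ht(C)\geq 4$ is valid; the reconstruction step is then routine bookkeeping whose only essential input is that pairwise disjointness of cycles forces at most one loop per vertex.
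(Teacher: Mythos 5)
Your proof is correct and follows essentially the same route as the paper: the ``if'' direction via Theorem \ref{Reduced}, and the ``only if'' direction by passing to complete reductions, invoking the Morita invariance of the weighted Hasse diagram (Theorems \ref{polinom} and \ref{Hasse}), and observing that height $\leq 3$ rules out shortcuts so the weighted Hasse diagram determines the completely reduced digraph. The only difference is that you supply a careful proof of the no-shortcut claim (showing a shortcut forces $ht(C)\geq 2+ht(D)\geq 4$), which the paper merely asserts as a consequence of Theorem \ref{height}; your argument for it is sound.
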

 
\begin{proof}
If $L\Gamma$ and $L\Lambda$ are Morita equivalent then $GKdim L\Lambda <4$ by Theorem \ref{polinom}. They have the same weighted Hasse diagram since they are Morita equivalent. Their complete reductions are isomorphic because they are determined by their weighted Hasse diagrams when the Gelfand-Kirillov dimension is less than 4. Conversely, if their complete reductions are isomorphic then $L\Gamma$ and $L\Lambda$ are Morita equivalent by Theorem \ref{Reduced}.
\end{proof}
 \\
 
 Theorem \ref{<4} implies that the weighted Hasse diagram is a complete Morita invariant when the Gelfand-Kirillov dimension is less than 4.  The following examples show that this does not hold in general.

 \begin{example} \label{quantumdisk} Consider the following digraph $\Gamma $ and the quantum 4-disk $qD^4$ obtained from $\Gamma$ by deleting the shortcut $e$.\\

$$\Gamma : \qquad  \xymatrix{ {\bullet} \ar@(ul,ur) \ar@/_1.5pc/[rr]^e \ar@{->}[r]   & \bullet \ar@(ul,ur) \ar@{->}[r] &\bullet }  \qquad \qquad  qD^4 :   \quad  \xymatrix{ {\bullet} \ar@(ul,ur)   \ar@{->}[r]   & \bullet \ar@(ul,ur) \ar@{->}[r] &\bullet } $$


          \noindent
          Both of these digraphs are completely reduced and they are not isomorphic. However $L(\Gamma)\cong L(qD^4)$, so  $L(\Gamma)$ and $L(qD^4)$ are Morita equivalent.
 \end{example}

 \begin{example} The completely reduced digraphs $\Gamma$ and $\Lambda $ below have the same weighted Hasse diagram. Deleting the shortcut $e$ from $\Gamma$ we obtain $\Lambda$.  
 
$$ \Gamma : \qquad \xymatrix{ {\bullet}_u \ar@(ul,dl)
 \ar@{->}[r] \ar@{->}[dr]_e  \ar@/^/[r]   & \ar@{->}[d] \bullet_v \ar@(ul,ur) \ar@{->}[r] &\bullet _x\\
                 & \bullet_w &} \qquad \qquad \Lambda :  \quad \quad    \xymatrix{ {\bullet}_u \ar@(ul,dl)
 \ar@{->}[r]  \ar@/^/[r]   &\ar@{->}[d] \bullet_v \ar@(ul,ur) \ar@{->}[r] &\bullet_x \\
                 & \bullet_w &} $$    
 
\noindent
$L(\Gamma)$ and $L(\Lambda)$ are not Morita equivalent since $K_0(L(\Gamma)) \not\cong K_0(L(\Lambda))$.\\

\end{example}
 
 To finish the classification of Leavitt path algebras of polynomial growth with Gelfand-Kirillov dimension $\geq 4$, up to Morita equivalence, what remains is to understand the contribution of 
 shortcuts to Morita type.\\\\

 \noindent
 \textbf{Acknowledgements}\\

Ayten Ko\c{c} was supported by TUBITAK (The Scientific and Technological Research Council of Turkey)  BIDEB 2219-International Post-Doctoral Research Fellowship during her visit to the University of Oklahoma where some of this research was done. She would like to thank her colleagues at the host institution for their hospitality.

$^*$ Department of Mathematics, \\ Gebze Technical  University,  Gebze, TÜRK\.{I}YE\\ 
E-mail: aytenkoc@gtu.edu.tr\\

$^{**}$ Department of Mathematics, \\ University of Oklahoma, Norman, OK, USA \\
E-mail: mozaydin@ou.edu


\begin{thebibliography}{1}
\bibitem{aam17} G. Abrams, P. Ara, M. Siles Molina,  \textit{Leavitt path algebras}, Lecture Notes in Mathematics Vol. 2191, Springer Verlag, (2017).


\bibitem{aa05} G. Abrams, G. Aranda Pino, \textit{The Leavitt Path Algebra of a Graph}, Journal of Algebra 293 (2) (2005) 319-334.


\bibitem{aam07} G. Abrams, G. Aranda Pino, M. Siles Molina, \textit{Finite-dimensional Leavitt Path Algebras}, J. Pure Appl. Algebra 209 (2007) 753-762.

\bibitem{aam08} G. Abrams, G. Aranda Pino, M. Siles Molina, \textit{Locally Finite Leavitt Path Algebras}, Israel J. Math. 165 (2008) 329 - 348.


\bibitem{aajz12} A. Alahmedi, H. Alsulami, S. Jain, E. Zelmanov, \textit{Leavitt Path Algebras of Finite Gelfand-Kirillov Diemension}, J. Algebra Appl. 11 (6) (2012) 1250225, pp.1-6.

\bibitem{aajz13} A. Alahmedi, H. Alsulami, S. Jain, E. Zelmanov, \textit{Structure of Leavitt Path Algebras of  Polynomial Growth}, Proceedings of the National Academy of Sciences USA 110 (2013) 15222-15224.


\bibitem{ag12}	P. Ara, K.R. Goodearl, \textit{Leavitt Path Algebras of Separated Graph,} J. Reine Angew. Math. 669 (2012) 165-224. 

\bibitem{amp07} P. Ara, M.A. Moreno, E. Pardo, \textit{Nonstable  K-theory for Graph Algebras}, Alg. Rep. Thry. 10 (2007) 157--178.

\bibitem{ar14} P. Ara, K.M. Rangaswamy,  \textit{Finitely Presented Simple Modules over Leavitt Path Algebras}, J. Algebra 417 (2014) 333-352.


\bibitem{Bergm} G.M. Bergman, \textit{Coproducts and some universal ring constructions} Trans. Amer. Math.
Soc., 200 (1974) 33-88 .



\bibitem{che15} X.W. Chen, \textit{Irreducible Representations of Leavitt Path Algebras} Forum Mathematicum 27 (2015) 549-574.


\bibitem{dw05} H. Derksen, J. Weyman, An Introduction to Quiver Representations, Graduate Studies in Mathematics 184, American Mathematical Society, (2017).





\bibitem{goo09}K.R. Goodearl,  Leavitt Path Algebras and Direct Limits. In Rings, Modules and Representations, volume 480 of Contemporary Math. American Mathematical Society, Providence, R.I., (2009) 165-187.

\bibitem{hs02}J.H. Hong, W. Szymanski,  Quantum Spheres and Projective Spaces as Graph Algebras, Commun. Math. Phys. 232 (2002) 157-188. 


\bibitem{ko1} A. Ko\c{c}, M. \"{O}zayd\i n, \textit{Representations of Leavitt Path Algebras}, Journal of Pure and Applied Algebra 224 (2020) 1297-1319.

\bibitem{ko2} A. Ko\c{c}, M. \"{O}zayd\i n,  \textit{Finite Dimenional Representations of Leavitt path algebras}, Forum Mathematicum 30 (4)
 (2018) 915-928.

\bibitem{lea62} W. G. Leavitt, The Module Type of a Ring, Transactions AMS, 103 (1962) 113-130.




\bibitem{r15} K.M. Rangaswamy, \textit{On Simple Modules over Leavitt Path Algebras}, J. Algebra 423 (2015) 239-258.

\bibitem{r20} K.M. Rangaswamy, \textit{A survey of Some of the Recent Developments in Leavitt Path Algebras}, in: Ambily, A., Hazrat, R., Sury, B. (eds) Leavitt Path Algebras and Classical K-Thoery. Indian Statistical Institute Series. Springer (2020) pp: 3-20.


\bibitem{t11} M. Tomforde, \textit{Leavitt Path Algebras with Coefficients in a Commutative Ring}, J. Pure and Appl. Algebra, 215 (2011) 471-484.\\




\end{thebibliography}
  \end{document}